\newtheorem{theorem}{Theorem}[section]
\newtheorem{lemma}[theorem]{Lemma}
\newtheorem{corollary}[theorem]{Corollary}
\newtheorem{definition}{Definition}[section]
\theoremstyle{remark}
\newtheorem{remark}[theorem]{Remark}
\theoremstyle{definition}
\numberwithin{equation}{section}
\newcommand{\R}{\ensuremath{\mathbb{R}}}
\newcommand{\N}{\ensuremath{\mathbb{N}}}
\newcommand{\Z}{\ensuremath{\mathbb{Z}}}
\newcommand{\dd}{\,\mathrm{d}}
\newcommand{\Grid}{\mathcal{G}_h}
\newcommand{\LL}{\mathcal{L}}
\begin{document}

\title[Asymptotic expansions and finite differences for the fractional $p$-Laplacian]{Higher-order asymptotic expansions and finite difference schemes for the fractional $p$-Laplacian}

\author[F.~del~Teso]{F\'elix del Teso}

\address[F. del Teso]{Departamento de Matem\'aticas, Universidad Aut\'onoma de Madrid, Ciudad Universitaria de Cantoblanco, 28049 Madrid, Spain} 

\email[]{fdelteso\@@{}ucm.es}

\urladdr{https://sites.google.com/view/felixdelteso}

\keywords{Fractional $p$-Laplacian, asymptotic expansion, finite difference, viscosity solution.}

\author[M.~Medina]{María Medina}

\address[M. Medina]{Departamento de Matem\'aticas, Universidad Aut\'onoma de Madrid, Ciudad Universitaria de Cantoblanco, 28049 Madrid, Spain}
\email[]{maria.medina@uam.es}
\urladdr{}

\author[P.~Ochoa]{Pablo Ochoa}

\address[P. Ochoa]{Facultad de Ingenier\'ia, Universidad Nacional de Cuyo-CONICET, 5500 Mendoza, Argentina}
\email[]{pablo.ochoa@ingenieria.uncuyo.edu.ar}

\address[]{}
\email[]{}
\urladdr{}

\subjclass[2020]{
 35K10
 35K55
 35D40
 35B05
 35R11
 65M06
 65M12}
 %35J20, %Variational methods for second-order elliptic equations
%35A02, %Uniqueness problems
%35B30, %Dependence of solutions on initial and boundary data
%35K15, %Initial value problems for second-order parabolic equations
%35K65, %Degenerate parabolic equations
%35D30, %Weak solutions
%35R09, %Integro-partial differential equations
%35R11, %Fractional partial differential equations 
%65M06, %Finite difference methods
%65M12, %Stability and convergence of numerical methods
%76S05%Flows in porous media; filtration
%\maketitle

\begin{abstract}   

We propose a new asymptotic expansion for the fractional $p$-Laplacian with precise computations of the errors. Our approximation is shown to hold in the whole range $p\in(1,\infty)$ and $s\in(0,1)$, with errors that do not degenerate as $s\to1^-$. These are super-quadratic for a wide range of $p$ (better far from the zero gradient points), and optimal in most cases. One of the main ideas here is the fact that the singular part of the integral representation of the fractional $p$-Laplacian behaves like a local $p$-Laplacian with a weight correction. As a consequence of this, we also revisit a previous asymptotic expansion for the classical $p$-Laplacian, whose error orders were not known.

Based on the previous result, we propose monotone finite difference approximations of the fractional $p$-Laplacian with explicit weights and we obtain the error estimates. Finally, we introduce explicit finite difference schemes for the associated parabolic problem in $\R^d$ and show that it is stable, monotone and convergent in the context of viscosity solutions. An interesting feature is the fact that the stability condition improves with the regularity of the initial data.

\end{abstract}

\maketitle

\setcounter{tocdepth}{1}

\tableofcontents 

\section{Introduction}
Consider $J_p:\R\to\R$ given as $J_p(\xi):=|\xi|^{p-2}\xi$ for $p>1$. We define the fractional $p$-Laplacian operator as
$$-(-\Delta)^s_p\phi(x):=\,P.V.\int_{\R^d}J_p(\phi(x+y)-\phi(x))\frac{\dd y}{|y|^{d+sp}},$$
with $s\in(0,1)$. The usual definition of the fractional $p$-Laplacian carries a normalizing constant (depending on $p$, $s$, and $d$) in front of the integral (see for instance \cite{dTGoVa21}). This constant is irrelevant for our purposes so, for the sake of clarity, we omit it in the definition and the formulation of the results. 

The limit operator (up to suitable normalization constant) as $s\to1^-$ is the so-called $p$-Laplacian, defined as
\[
\Delta_p\phi(x):= \nabla\cdot \left(|\nabla \phi(x)|^{p-2}\nabla\phi(x)\right).
\]

The aim of this paper is to give asymptotic expansions with high order error estimates  and discretizations of these operators, apply them to the study of a parabolic problem. 

More precisely, we first revisit the mean value property \eqref{Mlocal} introduced in \cite{dTLi21,BuSq22} for the $p$-Laplacian, and prove uniform error estimates that, in the best escenario, are of quadratic order (see Theorem \ref{asympExpUnif} and Theorem \ref{asympExp}). Later we exploit the fact that the singular part of the fractional $p$-Laplacian behaves like a weighted $p$-Laplacian to introduce a new asymptotic expansion for this operator, which has even better orders of convergence than the local case (see Theorem \ref{asympExpUnif} and Theorem \ref{asympExp}). Actually these orders improve with $p$ and do not degenerate as $s\to1^-$ (the orders of the local case are recovered).

Once asymptotic expansions have been introduced, we use suitable quadrature rules for the resulting nonlinear integral operators to provide two monotone finite difference discretizations. Both of them have the same order of consistency, and one of them produce explicit weights (i.e. they do not rely on computing integrals with respect to measures that are typically not explicit). This is an advantage computationally speaking.

Finally, we exploit the fact that all our approximation operators satisfy the maximum principle (i.e. at maximum points the operator has a sign), to introduce a convergent numerical scheme for the associated nonlocal parabolic problem in the context of viscosity solutions. An interesting feature here is that, for regular enough initial data, our scheme is shown to be stable under a very mild CFL-type stability condition that is consistent with the linear case.

\subsection{Description of the results and their  context in the literature}
The topic of asymptotic expansion for $p$-Laplacian type operators has been intensively studied in the last decade. The first known result in this direction is the seminal work of Manfredi, Parviainen and Rossi \cite{MaPaRo10} where the following asymptotic formula for $\Delta_p^{\textup{N}}\phi:=|\nabla \phi|^{2-p}\Delta_p\phi$ (the so-called normalized $p$-Laplacian) is presented for $2<p\leq\infty$:
\begin{equation*}
\mathcal{A}^{p}_r\phi(x)= \frac{1}{r^2}\left(\frac{p-2}{2(p+d)}( \max_{y\in \overline{B_r}} \phi(x+ y) + \min_{y\in \overline{B_r}} \phi(x+ y)) + \frac{d+2}{p+d}\fint_{B_r} \phi(x+y)\dd y\right).
\end{equation*}
As it is shown in \cite[Exercise 3.7]{Lew20}, this approximation is of order $O(r)$ outside of zero gradient points. In this direction another asymptotic expansion, valid also in the case $1<p<2$ is presented in \cite{KaMaPa12}, again of order $O(r)$.

The results for the $p$-Laplacian are due to Bucur and Squassina \cite{BuSq22}, and del Teso and Lindgren \cite{dTLi21}. They introduce the expansions presented in \eqref{Mlocal}, and show them to be consistent, but with no orders of convergence. As a counterpart of the results of this paper, here we obtain the order $O(r^{\min\{\gamma,2\}})$ for all $\gamma<p-1$ away from zero gradient points if $p>1$ (see Theorem \ref{asympExp} and Remark \ref{rem:optim} for its optimality when $p\geq3$), and $O(r^{\min\{p-2,2\}})$ for all points if $p\geq 2$ (see Theorem \ref{asympExpUnif} and Remark \ref{rem:optim} for its optimality). Our results are also coherent with the case $p=2$ where the natural asymptotic expansion is the usual mean value formula, known to be of order $O(r^2)$.

For the fractional $p$-Laplacian  $-(-\Delta)^s_p$, the only known asymptotic expansion is the one given by Bucur and Squassina in \cite{BuSq22}:
\begin{equation}\label{eq:asexpBuSq}
\mathcal{A}_{r}^{s,p}\phi(x)=\int_{\R^d\setminus B_r} J_p(\phi(x+y)-\phi(x))\frac{\dd y}{|y|^{d+(p-2)s}(|y|^2-r^2)^s}.
\end{equation}
The authors proved this approximation to be of order $O(r^{2-2s})$. In \eqref{Mrs} we propose a new asymptotic expansion with a number of improved properties with respect to \eqref{eq:asexpBuSq}:
\begin{enumerate}[$\bullet$]
\item The measure in \eqref{Mrs} is bounded for every $r>0$, while the one in  \eqref{eq:asexpBuSq} is always unbounded.
\item  Consistency of \eqref{Mrs} also holds in the range $p\in(1,2)$ at nonzero gradient points.
\item The order of convergence of \eqref{Mrs} improves with $p$.
\item The order of convergence of \eqref{Mrs} does not degenerate as $s\to1^-$.
\item While the order of \eqref{eq:asexpBuSq} is always sub-quadratic, the order of \eqref{Mrs} is super-quadratic if $p\geq 3/(2-s)$ (in particular $p\geq3$ for all $s\in(0,1)$) at nonzero gradient points, and if $p\geq 4/(2-s)$ and $p>2$ (in particular $p\geq4$ for all $s\in(0,1)$) at every point. We refer to Remark \ref{rem:optim} for a discussion on the optimality. 
\end{enumerate}
The fractional framework when $p=2$ has not explicitly been treated in the literature. Our approach, whose main idea is described in Section \ref{sec:asymptoticexporder} is also valid in that case.

One of the main ideas here is the fact that the singular part of the integral representation of the fractional $p$-Laplacian behaves like a local $p$-Laplacian with a weight correction. This is related to ideas presented in \cite{IsNa10,And-etal10}.

Asymptotic expansions for other nonlinear fractional operators have been treated in the literature, for instance in \cite{BuSq22,dTEnLe22,Lew22}. 

The second part of the paper regards finite difference approximations of the fractional $p$-Laplacian.  
In the local case, monotone discretizations of $\Delta_p^{\textup{N}}$ were introduced for the first time by Oberman in \cite{Obe13}. Other discretizations have been proposed later on (see e.g. \cite{CoLeMa17, dTMaPa22}). The case $p=\infty$ has also been treated (see e.g. \cite{Ob05,BuCaRo22,WeSa22}). For $\Delta_p$ the only known finite difference discretization is given by del Teso and Lindgren in \cite{dTLi21} (without results on the order of convergence). The results of this paper also apply to this case, and show the orders of convergence.

In the nonlocal case, the results of this paper are the first finite difference discretizations for the fractional $p$-Laplacian. Other works have proposed discrete versions of the fractional $p$-Laplacian (see e.g. \cite{JuDiXi20,ChBi22}), however, no consistency results in the line of Theorem \ref{DiscretOrder} are known. 

The fractional framework when $p=2$ has been extensively studied in the literature. A simplified linear version was successfully developed in dimension $d=1$ by Huang and Oberman in \cite{HuOb14} (see \cite{dTEnJa18} for higher dimensions), obtaining orders $O(h^{4-2s})$. An alternative discretization was proposed by Ciaurr et-al in dimension $d=1$ in \cite{Ciaetal18}, showing convergence of order $O(h^{2-2s})$ in dimension $d=1$. This idea was revisited in \cite{dTEnJa18} obtaining orders $O(h^2)$ unconditionally on $s\in(0,1)$ and in any dimension.

In the last part of this paper, we propose an explicit numerical scheme for the parabolic equation
\begin{equation}\label{eq:parabfplap}
\partial_t u +(-\Delta)_p^su=f \quad \textup{in} \quad \R^d\times(0,T)
\end{equation}
using the discretization previously introduced. In particular we show that, for an initial condition with certain $a$-H\"older regularity, the scheme is convergent under a stability condition of the form
\begin{equation*}
h=O(r), \quad \text{and }\quad 
\tau  \lesssim \left\{ \begin{split}r^{ 2s+(s-a)(p-2)} \quad &\textup{if} \quad  a<\frac{sp}{p-1}\\
\frac{r^{a}}{|\log(r)|} \qquad \quad \, \, \,  &\textup{if} \quad  a\geq\frac{sp}{p-1},
\end{split}\right.
\end{equation*}
where $h$ and $\tau$ are the mesh parameters of the discretizations in space and time respectively.
This result is coherent with the results for $p=2$ and $s\in(0,1)$ (see e.g. \cite{Dro10,dTEnJa19}) and also the ones in the local case for  $p>2$, recently proposed by del Teso and Lindgren in \cite{dTLi22b}. Problem \eqref{eq:parabfplap} has been previously studied (cf. \cite{And-etal10,Vaz20,Vaz21,Vaz22}).  The literature regarding numerical methods for PDE problems related to the fractional $p$-Laplacian is scarce. To our best knowledge, the only known result is the recent work of  Borthagaray, Li and Nochetto \cite{BoWeNo22} where the Dirichlet problem is treated in the finite element setting. In that work, the discrete version of the problem is expressed in terms of integrals of certain interpolation basis with respect to a singular integral (and in general it cannot be explicitly computed even in dimension $d=1$ and needs of a quadrature rule that is used for computational reasons). A main advantage of our approach is that the weights of the discretization are explicit and can be directly used  without an extra approximation step.

\subsection*{Notation} For $\rho >0$ we define
$$B_\rho(x):=\{y\in\R^d:\,|x-y|<\rho\},\quad B_\rho:=B_\rho(0).$$
The constant $\omega_d$ will denote the volume of the ball of radius $1$ in $\R^d$. 
Given an open set $\Omega \subset \R^d$, we consider
$$C_b^k(\Omega):=\{\phi\in C^k(\Omega):\,|D^\kappa\phi|\in L^\infty(\overline{\Omega}) \mbox{ for all multiindex }\kappa,|\kappa|=0,1,\ldots, k\},$$
equipped with the norm
$$\|\phi\|_{C_b^k(\Omega)}:=\sum_{|\kappa|=0}^k\|D^\kappa\phi\|_{L^\infty(\overline{\Omega})}.$$
Given  functions $f$ and $g$, we recall that $f=O(g)$ if  there is $C>0$ such that $|f(x)|\leq C|g(x)|$ for all $x$. Also, $f=o_g(1)$  as $x \to x_0$ if 
$$\lim_{x \to x_0}\dfrac{|f(x)|}{|g(x)|}=0.$$We say that $f\asymp g$ if there are constants $C_1, C_2 > 0$ such that
$$C_1 g \leq f \leq C_2 g.$$
For $p >1$, 
$$J_p(\xi):=|\xi|^{p-2}\xi,\quad p>1.$$

Finally, $C$ will denote a universal constant that may change from line to line. When relevant, we will add subscripts to specify the dependence of certain parameters.

\section{Main results}
In this section we introduce the principal contributions of the article.
\subsection{Asymptotic expansions} Let $s\in(0,1)$ and $p>1$. We propose the following asymptotic expansion of the fractional $p$-Laplacian: 
\begin{equation}\label{Mrs}
\mathcal{M}_r^{s,p}[\phi](x):= \frac{(p+d)}{p(1-s)r^{d+sp}}\int_{B_r}J_p(\phi(x+y)-\phi(x)) \dd y + \int_{\R^d\setminus B_r}J_p(\phi(x+y)-\phi(x)) \frac{\dd y}{|y|^{d+sp}}.
\end{equation}
We also recall the two asymptotic expansions for the $p$-Laplacian (case $s=1$) introduced in \cite{dTLi21}:
\begin{equation}\label{Mlocal}
\mathcal{M}^p_{r,1}[\phi](x):=\frac{\kappa_{p,d}}{r^p}\fint_{\partial B_r}J_p(\phi(x+y)-\phi(x))\dd \sigma(y),\quad \mathcal{M}^p_{r,2}[\phi](x):=\frac{(p+d)\kappa_{p,d}}{d\,r^p}\fint_{B_r}J_p(\phi(x+y)-\phi(x))\dd y,
\end{equation}
where
$$\kappa_{p,d}:=2\left(\fint_{\partial B_1}|y_1|^p\dd \sigma(y)\right)^{-1}.$$
We present two results. In the first one we prove local uniform convergence of the previous expansions (both local and non local) at every point in the space, with orders of convergence related to the quantity
\begin{equation}\label{def:nu}
\gamma:=\left\{
\begin{split}
2\quad&\mbox{ if } p=2 \mbox{ or }p\geq 4\\
p-2\quad&\mbox{ if }p\in (2,4).
\end{split}\right.
\end{equation}
The result is the following:
\begin{theorem}\label{asympExpUnif}
Let $p \geq 2$ and $s\in(0,1)$. Let $x\in\R^d$ and $0<R<1$ such that $\phi\in C^4_b(B_R(x))$. Then, for $r<R/2$ and any $\gamma$ satisfying \eqref{def:nu}  we have 
\begin{equation}\label{LocalUnif}
\Delta_p\phi=\mathcal{M}_{r,i}^p[\phi]+O(r^{\gamma})\quad \text{uniformly in} \quad B_{R/2}(x) \quad \text{for} \quad i=1,2.
\end{equation}
If additionally $\phi \in L^\infty(\R^d)$, then
\begin{equation}\label{NonLocalUnif}
-(-\Delta)^s_p\phi=\mathcal{M}_r^{s,p}[\phi]+O(r^{\gamma+p(1-s)})\quad \text{uniformly in} \quad B_{R/2}(x).
\end{equation}
\end{theorem}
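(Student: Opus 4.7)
The plan is to reduce both \eqref{LocalUnif} and \eqref{NonLocalUnif} to a Taylor expansion of $J_p(\phi(z+y)-\phi(z))$ in powers of $y$, combined with a controlled expansion of $J_p$ itself. Since $p\geq 2$, the function $J_p$ is $C^1$, is $C^2$ for $p\geq 3$ and $C^3$ for $p\geq 4$; the piecewise definition \eqref{def:nu} of $\gamma$ tracks how far the Taylor expansion of $J_p$ can be pushed, which in turn controls how many terms of the expansion of $\phi(z+y)-\phi(z)$ can be used to cancel contributions to the integral.

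\textbf{Step 1: local case.} Fix $z\in B_{R/2}(x)$ and write, by Taylor's theorem,
\begin{equation*}
\phi(z+y)-\phi(z)=\nabla\phi(z)\cdot y+\tfrac{1}{2}\langle D^2\phi(z)y,y\rangle+P_3(z,y)+O(|y|^4),
\end{equation*}
with $P_3(z,\cdot)$ a homogeneous cubic in $y$. Substitute in $J_p$ and use either one (for $p\in(2,4)$) or two (for $p\geq 4$) orders of expansion of $J_p$ around $a=\nabla\phi(z)\cdot y$. When integrating on $B_r$ or $\partial B_r$, the $y$-odd contributions---the leading $J_p(\nabla\phi(z)\cdot y)$, the term linear in $P_3$, and similar---vanish by symmetry. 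The leading surviving even term is a multiple of $|\nabla\phi(z)\cdot y|^{p-2}\langle D^2\phi(z)y,y\rangle$; evaluating the corresponding tensorial integral on the unit sphere and matching with the normalization $\kappa_{p,d}$ produces the divergence-form quantity
\begin{equation*}
|\nabla\phi(z)|^{p-2}\Delta\phi(z)+(p-2)|\nabla\phi(z)|^{p-4}\langle D^2\phi(z)\nabla\phi(z),\nabla\phi(z)\rangle=\Delta_p\phi(z).
\end{equation*}
The remainder in the $J_p$-expansion is controlled by a two-regime inequality of the form $|J_p(a+b)-J_p(a)-(p-1)|a|^{p-2}b|\leq C\min(|a|^{p-3}|b|^2,\,|b|^{p-1})$ (together with its $J_p''$-analogue for $p\geq 4$); taking $b$ to be the quadratic and higher terms, and using $|b|\leq C|y|^2$, yields, after integration and division by $r^p$, the $O(r^\gamma)$ bound uniformly in $z$.

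\textbf{Step 2: nonlocal case.} Split both $-(-\Delta)^s_p\phi(z)$ and $\mathcal{M}_r^{s,p}[\phi](z)$ at the sphere $\partial B_r$. The outer integrals (over $\R^d\setminus B_r$) are identical in the two expressions and cancel in the difference, so what remains to estimate is
\begin{equation*}
E(z)=P.V.\!\int_{B_r}\!J_p(\phi(z+y)-\phi(z))\frac{\dd y}{|y|^{d+sp}}-\frac{p+d}{p(1-s)r^{d+sp}}\!\int_{B_r}\!J_p(\phi(z+y)-\phi(z))\,\dd y.
\end{equation*}
Apply the same expansion of the integrand as in Step 1 to both integrals. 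The $y$-odd contributions vanish by symmetry (the principal value absorbing the $|y|^{-d-sp}$ singularity), and the leading even term admits a radial-angular decomposition: the singular integral produces the radial factor $\int_0^r\rho^{p-sp-1}\dd\rho=r^{p(1-s)}/(p(1-s))$, while the regular one produces $r^{-d-sp}\int_0^r\rho^{p+d-1}\dd\rho=r^{p(1-s)}/(p+d)$, both multiplied by the \emph{same} angular integral. The constant $(p+d)/(p(1-s))$ in the definition of $\mathcal{M}_r^{s,p}$ is engineered precisely so that these two leading contributions coincide and cancel in $E(z)$. The residual Taylor remainder, integrated against both measures, is bounded by $C\|\phi\|_{C^4_b(B_R(x))}\,r^{\gamma+p(1-s)}$; the $(1-s)$ factor in the denominator of the normalization cancels the singular $(1-s)^{-1}$ produced by the radial integral, ensuring the bound does not degenerate as $s\to 1^-$.

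\textbf{Main obstacle.} The delicate step is the case analysis in the $J_p$-expansion at points where $|\nabla\phi(z)|$ is small (in particular zero), which can occur anywhere in $B_{R/2}(x)$ since uniform convergence is required. There the linear approximation of $J_p$ with slope $(p-1)|\nabla\phi(z)\cdot y|^{p-2}$ degenerates and one must fall back on the coarser bound $|J_p(a+b)-J_p(a)|\leq C|b|^{p-1}$, which is responsible for the degraded exponent $p-2$ when $p\in(2,4)$. Patching the two regimes uniformly in $z$ forces the piecewise definition \eqref{def:nu} of $\gamma$; in the nonlocal case, this must be done while simultaneously verifying that the normalization $(p+d)/(p(1-s))$ is exactly the constant needed to cancel the leading terms of both integrals in $E(z)$, which is the main computational ingredient.
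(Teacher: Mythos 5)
Your proposal is correct and follows essentially the same route as the paper: a fourth-order Taylor expansion of $\phi$, then an expansion of $J_p$ about $\nabla\phi(x)\cdot y$ combined with symmetrization (the paper packages this via the operator $D_y[\phi]$ of \eqref{defDy}), followed by the tensorial sphere/ball integrals of Lemma \ref{lem:niceidentity} that turn $|\nabla\phi\cdot y|^{p-2}\,y^T D^2\phi\,y$ into $\Delta_p\phi$, and finally the splitting of the fractional operator at $\partial B_r$ so that the tails cancel and the interior leading terms match by the choice of $(p+d)/(p(1-s))$. Your two-regime bound on the $J_p$-remainder and your identification of the degraded exponent $p-2$ at small-gradient points for $p\in(2,4)$ are exactly the mechanism behind Lemmas \ref{Approx p greater 4}, \ref{p greater 3} and \ref{Approx p greater 2} and the definition \eqref{def:nu}.
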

\begin{remark} \label{rem:optim}
The results in Theorem \ref{asympExpUnif} are optimal. In the range $p\in(2,4)$, take for instance $\phi:\R\to\R$ given by $\phi(x):=\min\{x^2,1\}$ (so that $\nabla \phi(0)=0$). It is standard to get that, for $s\in(0,1)$, we have
\[
|-(-\Delta)^s_p\phi(0)-\mathcal{M}_r^{s,p}[\phi](0)|= C_{s,p} r^{p-2 + p(1-s)} \quad \textup{with} \quad C_{s,p}\not=0 \quad \textup{if} \quad p>2,
\]
and for $j=1,2$,
\[
|\Delta_p \phi(0)-\mathcal{M}_{r,i}^{p}[\phi](0)|= C_{p,j} r^{p-2} \quad \textup{with} \quad C_{p,j}\not=0 \quad \textup{if} \quad p>2.
\]
For $p\geq 4$, we consider $\varphi(x):=\min\{e^{x},2\}$ and compute directly 
\[
|\Delta_p \varphi(0)-\mathcal{M}_{r,i}^{p}[\varphi](0)|= C_{p} r^{2} + O(r^3)\quad \textup{with}\quad  C_p\not=0,
\]
and a Taylor expansion  can be used to see that for $r$ small enough 
\[
|-(-\Delta)^s_p\varphi(0)-\mathcal{M}_r^{s,p}[\varphi](0)|=C_{s,p}r^{2+p(1-s)}+O(r^{3+p(1-s)}) \quad \textup{with}\quad  C_{s,p}\not=0.
\]
\end{remark}
According to the second example above, it makes sense to expect a better convergence in the range $p\in (1,4)$ far from the points where the grandient vanishes. Actually, if we consider
\begin{equation}\label{def:gamma}
\gamma \in \left\{\begin{split}
\{2\}\quad&\mbox{ if }p=2 \mbox{ or }p\geq 3,\\
(1,p-1)\quad&\mbox{ if }p\in (2,3),\\
(0,p-1)\quad&\mbox{ if }p\in (1,2),
\end{split}\right.
\end{equation}
we can prove the second result, that also covers the singular range $p\in (1,2)$. We present here the result for dimensions $d\geq2$, since in dimension $d=1$ we can get better estimates.
\begin{theorem}\label{asympExp}
Let $d\geq2$, $p > 1$ and $s\in(0,1)$. Let $x\in\R^d$ and $0<R<1$ such that $\phi\in C^4_b(B_R(x))$ and $\nabla\phi\neq 0$ in $\overline{B_R(x)}$. Consider 
\begin{equation}\label{def:R0}
R_0:=\frac{\inf_{B_R(x)}{|\nabla\phi|}}{\sqrt{2}\|D^2\phi\|_{L^\infty(B_R(x))}}.
\end{equation}
Then, for $r<\tilde{R}:=\min\{R_0,R/2\}$ and any $\gamma$ satisfying \eqref{def:gamma}  we have 
\begin{equation}\label{LocalGrad}
\Delta_p\phi=\mathcal{M}_{r,i}^p[\phi]+O(r^{\gamma})\quad \text{uniformly in} \quad B_{\tilde{R}/2}(x) \quad \text{for} \quad i=1,2.
\end{equation}
If additionally $\phi \in L^\infty(\R^d)$, then
\begin{equation}\label{NonLocalGrad}
-(-\Delta)^s_p\phi=\mathcal{M}_r^{s,p}[\phi]+O(r^{\gamma+p(1-s)})\quad \text{uniformly in} \quad B_{\tilde{R}/2}(x).
\end{equation}
\end{theorem}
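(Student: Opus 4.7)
Since the tail integrals (over $\R^d\setminus B_r$) in the definitions of $-(-\Delta)_p^s\phi(x)$ and $\mathcal{M}_r^{s,p}[\phi](x)$ agree, the error reduces to estimating
\[
E_r(x):=P.V.\int_{B_r}J_p(\phi(x+y)-\phi(x))\frac{\dd y}{|y|^{d+sp}}-\frac{(p+d)}{p(1-s)r^{d+sp}}\int_{B_r}J_p(\phi(x+y)-\phi(x))\dd y.
\]
The first move is to change variables $y\mapsto -y$ in each integrand and work with the symmetrized quantity
\[
H(y):=\tfrac{1}{2}\bigl[J_p(\phi(x+y)-\phi(x))+J_p(\phi(x-y)-\phi(x))\bigr],
\]
which removes the need for the principal value and, by evenness of $J_p'$, cancels the leading odd-in-$y$ piece $J_p(\nabla\phi(x)\cdot y)$ that arises from the Taylor expansion of $\phi(x\pm y)-\phi(x)$.

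Using the threshold $R_0$ in \eqref{def:R0}, split $B_r$ into a \emph{good} set $G_r:=\{y\in B_r: |\nabla\phi(x)\cdot y|\geq c\|D^2\phi\|_{L^\infty(B_R(x))}|y|^2\}$ and its complement, a thin double cone around the hyperplane $\{y:\nabla\phi(x)\cdot y=0\}$. On $G_r$ the argument of $J_p$ is essentially $\nabla\phi(x)\cdot y$, which is bounded away from the singularity of $J_p$, so $J_p$ may be safely Taylor-expanded around it; the leading even-in-$y$ term of $H$ is
\[
H_0(y):=\tfrac{p-1}{2}|\nabla\phi(x)\cdot y|^{p-2}\,y^{T}D^2\phi(x)y,
\]
homogeneous of degree $p$ in $y$. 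A polar-coordinates computation then shows that integrating $H_0$ against the two weights $|y|^{-(d+sp)}$ and $(p+d)/(p(1-s)r^{d+sp})$ produces the \emph{same} multiple of $r^{p(1-s)}\Delta_p\phi(x)$: the radial factors $\int_0^r\rho^{p+d-1}\dd\rho$ and $\int_0^r\rho^{p-sp-1}\dd\rho$ combine to yield exactly the normalizing constant $(p+d)/(p(1-s))$, which is precisely why that factor appears in \eqref{Mrs}. Consequently the $H_0$-contribution cancels in $E_r(x)$.

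What survives is the next Taylor terms on $G_r$ together with the bad-cone contribution. On $G_r$, the next-order error from Taylor-expanding $\phi$ and $J_p$ is of size $|\nabla\phi\cdot y|^{p-2}|y|^4$ (the third derivatives of $\phi$ produce an odd-in-$y$ term that drops out of $H$), which, integrated against $|y|^{-(d+sp)}$, gives $O(r^{2+p(1-s)})$; the remaining angular integrals stay finite uniformly in $p>1$ because $|\nabla\phi(x)\cdot\omega|^{p-2}$ is integrable on $S^{d-1}$ when $d\geq2$. On the bad cone one estimates $|H(y)|\lesssim|y|^{2(p-1)}$ and uses that its angular measure at radius $\rho$ is of order $\rho\|D^2\phi\|_{L^\infty(B_R(x))}/\inf_{B_R(x)}|\nabla\phi|$, yielding a contribution of order $r^{p-1+p(1-s)}$. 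Summing the two, the total error is $O(r^{\min\{2,p-1\}+p(1-s)})$, which matches the piecewise definition of $\gamma$ in \eqref{def:gamma} and proves \eqref{NonLocalGrad}. The local statement \eqref{LocalGrad} follows from the same Taylor/symmetrization analysis with the weight $r^{-p}$ (respectively the surface measure on $\partial B_r$) in place of $|y|^{-(d+sp)}$, producing the $r^{\gamma}$ error there.

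The main obstacle will be the bad-cone estimate in the singular range $p\in(1,2)$: there $J_p'$ blows up at $0$, so the crude bound on $H(y)$ must be balanced against the small angular measure of the bad cone, and this balance is exactly what limits $\gamma$ to $(0,p-1)$ in \eqref{def:gamma}. The uniformity of the estimates in $B_{\tilde{R}/2}(x)$ then forces the quantitative dependence of the constants on $\inf_{B_R(x)}|\nabla\phi|$ and $\|D^2\phi\|_{L^\infty(B_R(x))}$ precisely through $R_0$ in \eqref{def:R0}.
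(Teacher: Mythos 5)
Your overall framework is the same as the paper's: replace the integrand by the symmetrized quantity $H(y)=\tfrac{|y|^p}{2}D_y[\phi](x)$ (the paper's \eqref{defDy}), isolate the degree-$p$ homogeneous leading term $H_0(y)=\tfrac{p-1}{2}|\nabla\phi(x)\cdot y|^{p-2}y^TD^2\phi(x)y$, verify via a polar-coordinates identity (Lemma~\ref{lem:niceidentity}) that integrating $H_0$ against the two measures yields the same multiple of $r^{p(1-s)}\Delta_p\phi(x)$, and split the remainder over the good cone $\{|\nabla\phi(x)\cdot y|\gtrsim|y|^2\}$ and its complement. Your bad-cone estimate ($|H|\lesssim|y|^{2(p-1)}$, thin-cone angular measure $\asymp\rho$, contribution $O(r^{p-1+p(1-s)})$) also matches the paper's computation in Theorems~\ref{theo_p34} and~\ref{teo_p23}.

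However, there is a genuine gap in your good-cone error estimate that would invalidate the proof for $p\in(1,3)$. You assert that ``the next-order error from Taylor-expanding $\phi$ and $J_p$ is of size $|\nabla\phi\cdot y|^{p-2}|y|^4$, ... the remaining angular integrals stay finite uniformly in $p>1$.'' This is only true for $p\geq 3$. The Taylor remainder for $J_p$ necessarily produces a factor of $J_p'''$ (or $[J_p'']_{C^\beta}$), and after the factorization $J_p(f+g)=J_p(f)J_p(1+g/f)$ the remainder involves $|f|^{p-4}g^3$, i.e.\ an angular factor $|\nabla\phi(x)\cdot\tfrac{y}{|y|}|^{p-4}$. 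Integrability on $S^{d-1}$ requires $p-4>-1$, hence $p>3$; the paper's Lemma~\ref{Approx p greater 3} exploits precisely this together with the vanishing prefactor $(p-3)$ at $p=3$. For $p\in(2,3)$ the paper must fall back on a $\beta$-Hölder estimate for $J_p''$ (Lemma~\ref{approx p 2 3}) that yields only $O(r^{1+\beta+p(1-s)})$ for $\beta<p-2$ strictly, which is why \eqref{def:gamma} restricts to the open interval $(1,p-1)$ rather than your claimed $\gamma=\min\{2,p-1\}=p-1$. For $p\in(1,2)$ the situation is worse: $J_p'$ itself is singular, the cone threshold $|\nabla\phi\cdot y|>K_0|y|^2$ is replaced in the paper by a fixed threshold $\eta>0$ sent to $0$ by dominated convergence (Lemma~\ref{approx p 1 2}, Theorem~\ref{teo_p12}), and again only $\gamma\in(0,p-1)$ open is obtained. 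Your sketch silently assumes you can differentiate $J_p$ three times on the good cone, and so it over-claims the rate and omits the key Hölder-remainder idea that the paper needs in the singular ranges.
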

\begin{remark}
The second example in Remark \ref{rem:optim} is valid for every $p\geq 3$. Since in that case $\nabla\varphi(0)\neq 0$, it also proves the optimality of Theorem \ref{asympExp} in the case $p\geq 3$. An interesting open question is what the optimal error is in the case $p\in(1,3)$. Actually, in dimension $d=1$, we are able to prove the optimal orders (see Theorem \ref{thm:1Dresult}). 

\end{remark}

It is important to note that the order of convergence of our asymptotic expansion improves with $p$, reaching super-quadratic orders and more. We also observe that the orders do not degenerate as $s\to1^-$. We refer to Figure \ref{fig:two graphs} for a visual reference in the whole range $s\in(0,1]$ and $p>1$. 

\begin{figure}[h!]
     \centering
     \begin{subfigure}[b]{0.45\textwidth}
         \centering
         \includegraphics[width=0.65\textwidth]{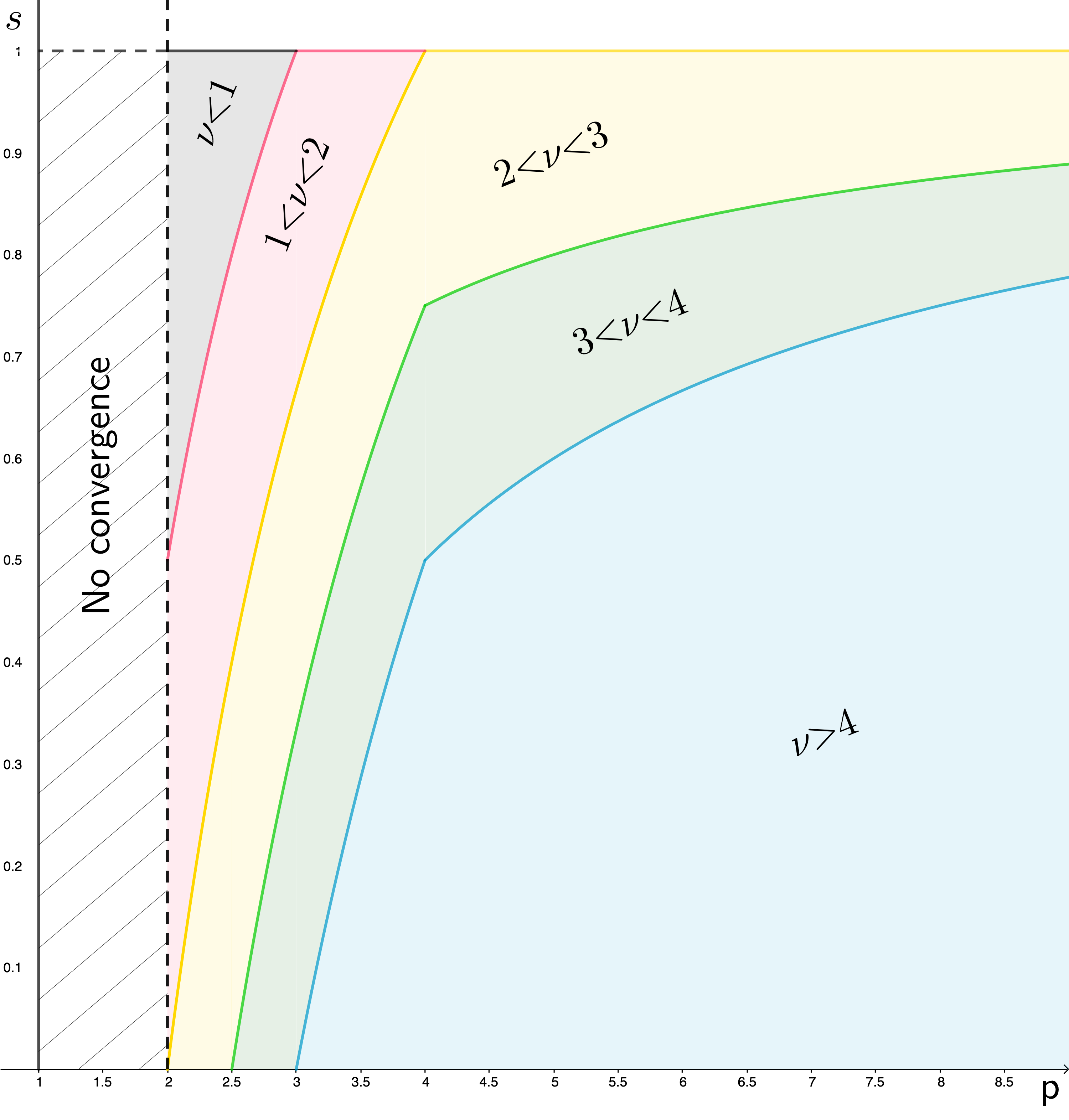}
         \caption{Convergence rate $\nu=\gamma+p(1-s)$ according to Theorem \ref{asympExpUnif}.}
     \end{subfigure}
     \hfill
     \begin{subfigure}[b]{0.45\textwidth}
         \centering
         \includegraphics[width=0.65\textwidth]{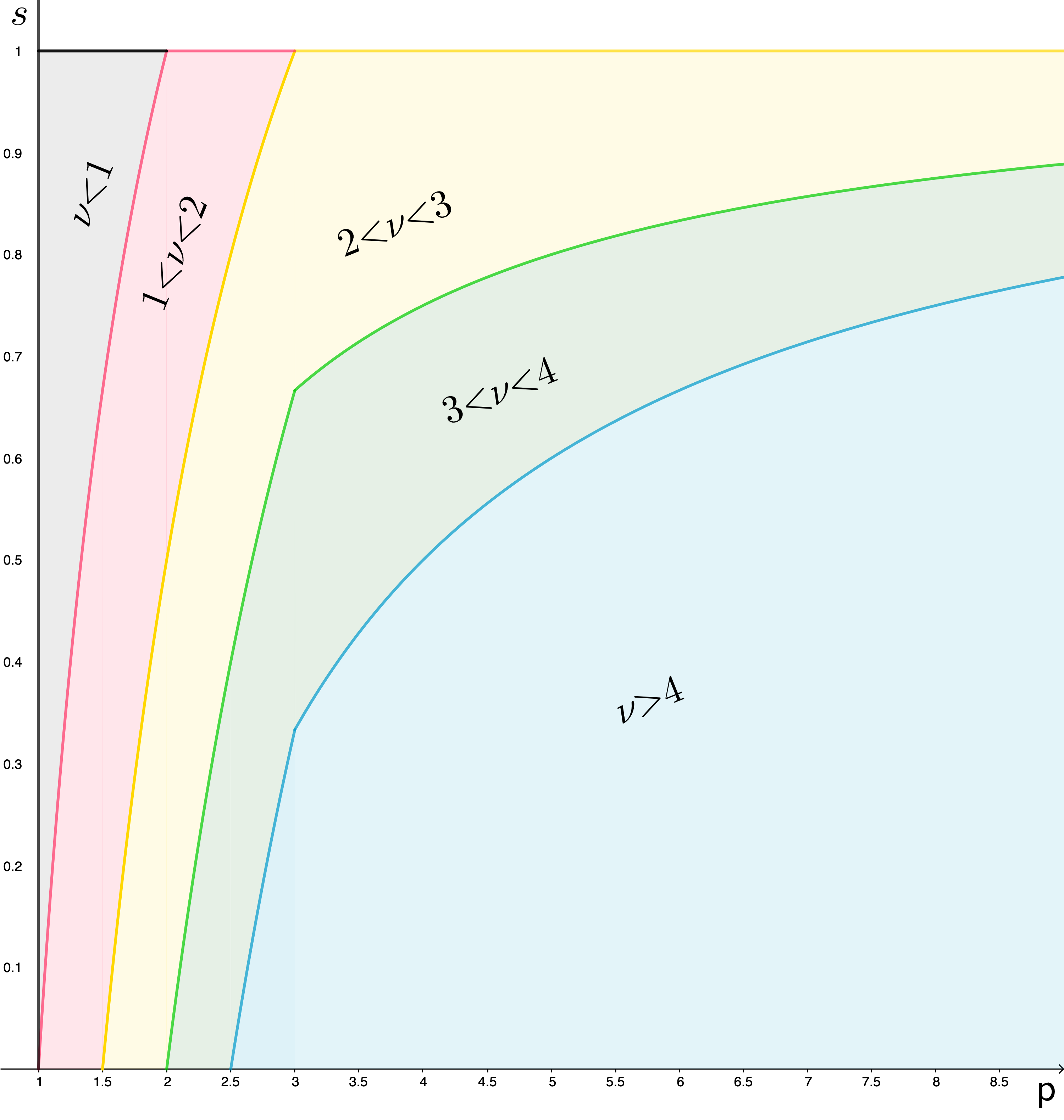}
         \caption{Convergence rate $\nu=\gamma+p(1-s)$ according to Theorem \ref{asympExp}.}
     \end{subfigure}
     \hfill
          \caption{Orders of convergence for the asymptotic expansions for $-(-\Delta)_p^s$ and $\Delta_p$.}
        \label{fig:two graphs}
\end{figure}

Finally, we present the results in dimension $d=1$, where we reach optimal orders of convergence according to Remark \ref{rem:optim}. We would like to note that, in this case, the first asymptotic expansion of the $p$-Laplacian becomes
\begin{equation}\label{eq:asexp1D}
\mathcal{M}^p_{r,1}[\phi](x)= \frac{J_p(\phi(x+r)-\phi(x))+J_p(\phi(x-r)-\phi(x))}{r^p}.
\end{equation}
This is itself a finite difference discretization of the $p$-Laplacian. The consistency result in dimension $d=1$ reads as follows:
\begin{theorem}\label{thm:1Dresult}
Let $d=1$ and the same assumptions and notation of Theorem \ref{asympExp} hold. Then
\begin{equation}\label{LocalGrad}
\Delta_p\phi=\mathcal{M}_{r,i}^p[\phi]+O(r^{2})\quad \text{uniformly in} \quad B_{R/2}(x) \quad \text{for} \quad i=1,2,
\end{equation}
and 
\begin{equation*}
-(-\Delta)^s_p\phi=\mathcal{M}_r^{s,p}[\phi]+O(r^{2+p(1-s)})\quad \text{uniformly in} \quad B_{R/2}(x).
\end{equation*}
\end{theorem}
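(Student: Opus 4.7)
The plan is to exploit dimension one to replace the ball-integrals by explicit sums, and then Taylor-expand everything carefully, keeping track of parity cancellations that are particular to $d=1$. The hypotheses $\phi\in C^4_b(B_R(x))$ and $\inf_{B_R(x)}|\phi'|>0$ guarantee, for $r<R_0$, that $a_+:=\phi(x+r)-\phi(x)$ and $a_-:=\phi(x-r)-\phi(x)$ stay away from zero with the same sign as $\pm\phi'(x)r$, so that $J_p$ can be treated as a smooth function along the relevant curve.

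For the local expansion $\mathcal{M}^p_{r,1}$ recall the representation \eqref{eq:asexp1D}. I would write $a_\pm=\pm\phi'(x)r\,(1+\varepsilon_\pm)$ where, by a fourth-order Taylor expansion, $\varepsilon_\pm=\pm\alpha_1 r+\alpha_2 r^2\pm\alpha_3 r^3+O(r^4)$ with $\alpha_k$ expressible in terms of $\phi^{(k+1)}(x)/\phi'(x)$. Since $J_p(\lambda\xi)=|\lambda|^{p-1}\operatorname{sgn}(\lambda)J_p(\xi)/\operatorname{sgn}(\xi)$ and $1+\varepsilon_\pm>0$ for $r$ small, one gets the clean factorization $J_p(a_\pm)=J_p(\pm\phi'(x)r)(1+\varepsilon_\pm)^{p-1}$, and I would Taylor-expand $(1+t)^{p-1}$ to order three. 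Adding, the odd powers of $r$ in $\varepsilon_+$ and $\varepsilon_-$ cancel in $J_p(a_+)+J_p(a_-)$: the linear term produces precisely $\Delta_p\phi(x)\,r^p$ (using that $2(p-1)\alpha_1 J_p(\phi'(x))=(p-1)|\phi'(x)|^{p-2}\phi''(x)$), and the next surviving contribution is of order $r^{p+2}$; dividing by $r^p$ yields the $O(r^2)$ error. The case of $\mathcal{M}^p_{r,2}$ follows the same template by integrating the Taylor expansion of $J_p(\phi(x+y)-\phi(x))$ over $(-r,r)$, where again the odd-in-$y$ terms vanish identically.

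For the nonlocal expansion, the tail integrals on $\mathbb{R}\setminus B_r$ coincide exactly, reducing the problem to the single principal-value integral
\begin{equation*}
E(x,r):=\text{P.V.}\!\int_{B_r} J_p(\phi(x+y)-\phi(x))\left[\frac{1}{|y|^{1+sp}}-\frac{p+1}{p(1-s)\,r^{1+sp}}\right]dy.
\end{equation*}
I would expand $J_p(\phi(x+y)-\phi(x))=J_p(\phi'(x))|y|^{p-2}y\,\bigl(1+B_1 y+B_2 y^2+B_3 y^3+O(y^4)\bigr)$ via the same $(1+\varepsilon)^{p-1}$ trick in the variable $y$, then integrate term by term against both weights. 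The $y$-odd summands ($1$, $B_2 y^2$ and the $O(y^4)$ remainder) are killed by symmetry, in the principal-value sense for the singular weight and directly for the bounded one. The $y$-even summands $B_1 y$ and $B_3 y^3$ integrate to explicit multiples of $r^{p(1-s)}$ and $r^{p(1-s)+2}$ respectively. The point of the constant $\frac{p+1}{p(1-s)}$ is precisely that the $B_1$-contributions coincide: $\int_{-r}^{r}|y|^{p-1-sp}dy=\frac{2r^{p(1-s)}}{p(1-s)}$ matches $\frac{p+1}{p(1-s)r^{1+sp}}\int_{-r}^{r}|y|^{p}dy=\frac{2r^{p(1-s)}}{p(1-s)}$, so the leading order cancels identically. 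The remaining $B_3$-terms and the $O(y^4)$ estimate give $|E(x,r)|=O(r^{2+p(1-s)})$.

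The main technical point is not any single step but the uniform control of all implicit constants on $B_{R/2}(x)$: I need to verify that the coefficients $\alpha_k(y),B_k(y)$ and the $O(y^4)$ remainder in the $(1+\varepsilon)^{p-1}$ expansion are bounded in terms of $\|\phi\|_{C^4_b(B_R(x))}$ and $\inf_{B_R(x)}|\phi'|$ alone. This requires that for every $x_0\in B_{R/2}(x)$ and $r<\tilde R$, the point $x_0$ still enjoys a full ball $B_{R/2}(x_0)\subset B_R(x)$ on which $\phi'\neq 0$, which is exactly what $\tilde R=\min\{R_0,R/2\}$ guarantees. Once this is in place, both assertions of the theorem follow from the two computations above, and the orders $O(r^2)$ and $O(r^{2+p(1-s)})$ are sharp in view of Remark \ref{rem:optim}.
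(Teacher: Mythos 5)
Your proposal is correct and takes essentially the same route as the paper: the factorization $J_p(\lambda(1+\varepsilon))=J_p(\lambda)(1+\varepsilon)^{p-1}$ (the paper writes this as $J_p(f+g)=J_p(f)\,J_p(1+g/f)$ in Lemma \ref{Approx p greater 3}, reused in Lemma \ref{1d}), a third-order Taylor expansion of $(1+t)^{p-1}$ valid uniformly because $r<R_0$ forces $|\varepsilon|<1/2$, and the odd/even cancellation under symmetric integration that makes the leading terms match exactly by the choice of the constant $(p+d)/(p(1-s))$. One cosmetic imprecision: with $\phi\in C^4_b$ the expansion of $\varepsilon_\pm$ (or of $1+B_1y+\dots$) only has a well-defined $O(r^3)$ remainder, not a clean fourth-order term with a fixed $\alpha_3$ or $B_3$, and that remainder is not ``killed by symmetry''; but it is small enough — $O(r^2)$ locally and $O(r^{2+p(1-s)})$ nonlocally after integration — so the conclusion is unaffected.
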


\subsection{Discretizations}\label{subsec:disc} The second part of the article deals with two discretizations of $\mathcal{M}_r^{s,p}$, which as a consequence will provide discretizations of the fractional $p$-Laplacian. Given $h>0$, consider the uniform grid $\mathcal{G}_h:=h\Z^d=\{y_\alpha^h:=h\alpha:\,\alpha\in\Z^d\}$, and the cubes
 $$Q_\alpha^h:=y_\alpha^h+\frac{h}{2}[-1,1)^d.$$
  For  simplicity we will drop the dependence on $h$ and write $y_\alpha$ and $Q_\alpha$. Given $r>0$ we define, for $i=1,2$, the discrete operators:
\begin{equation}\label{Discret}
-(-\Delta)^{s,h}_{p,i}\phi(x):=\frac{(p+d)}{p(1-s)} \frac{h^d}{ r^{d+sp}}\sum_{y_\alpha\in B_r}J_p(\phi(x+y_\alpha)-\phi(x))+\sum_{y_\alpha\in \R^d\setminus B_r}J_p(\phi(x+y_\alpha)-\phi(x))W_{\alpha,i},
\end{equation}
where 
\begin{equation}\label{def:Wj}
W_{\alpha,1}:=\int_{ Q_\alpha}\frac{\dd y}{|y|^{d+sp}},\qquad W_{\alpha,2}:=\frac{h^d}{|y_\alpha|^{d+sp}}.
\end{equation}

\begin{theorem}\label{DiscretOrder}
Let the hypotheses of either Theorem \ref{asympExpUnif}, Theorem \ref{asympExp} or Theorem \ref{thm:1Dresult} hold, and let $h>0$. Then, there exists $\mu\in(0,1]$ such that, for $r$ appearing in \eqref{Discret} and satisfying $r\asymp h^\mu$, we have
$$(-\Delta)^{s,h}_{p,i}\phi=(-\Delta)^s_p\phi+o_h(1)\quad \mbox{as }h\to 0^+,\quad i=1,2,$$
uniformly in $B_{\tilde{R}/2}(x)$. 
\end{theorem}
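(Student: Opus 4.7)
The strategy is to use $\mathcal{M}_r^{s,p}[\phi]$ as a bridge between the discrete operator and $(-\Delta)^s_p\phi$. By the triangle inequality,
\begin{equation*}
\bigl|(-\Delta)^{s,h}_{p,i}\phi(x)-(-\Delta)^s_p\phi(x)\bigr|\le E_1(r)+E_2(r,h),
\end{equation*}
where $E_1(r):=\bigl|\mathcal{M}_r^{s,p}[\phi](x)+(-\Delta)^s_p\phi(x)\bigr|$ is the asymptotic expansion error and $E_2(r,h):=\bigl|\mathcal{M}_r^{s,p}[\phi](x)+(-\Delta)^{s,h}_{p,i}\phi(x)\bigr|$ is the quadrature error. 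The term $E_1$ is controlled by whichever of Theorem~\ref{asympExpUnif}, Theorem~\ref{asympExp} or Theorem~\ref{thm:1Dresult} is in force, giving $E_1(r)=O(r^{\gamma+p(1-s)})$ uniformly on $B_{\tilde R/2}(x)$, hence $E_1(r)\to 0$ as $r\to 0^+$.

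For $E_2$, setting $\tilde B_r:=\bigcup_{y_\alpha\in B_r}Q_\alpha$ and letting $y_{\alpha(y)}$ denote the grid point whose cube contains $y$, the discrete operator in \eqref{Discret} can be recast as the integral of the piecewise-constant function $y\mapsto J_p(\phi(x+y_{\alpha(y)})-\phi(x))$ against the weight $(p+d)/(p(1-s)r^{d+sp})$ on $\tilde B_r$ and against either $|y|^{-d-sp}$ (for $i=1$) or $|y_{\alpha(y)}|^{-d-sp}$ (for $i=2$) on its complement. Comparing with $\mathcal{M}_r^{s,p}[\phi](x)$ isolates three error sources: (a) a boundary mismatch on $B_r\triangle\tilde B_r$, contained in a spherical shell of thickness $\lesssim h$ and mass $\lesssim hr^{d-1}$; (b) a midpoint oscillation error inside each cube $Q_\alpha$; and, only for $i=2$, (c) the error from replacing $\int_{Q_\alpha}|y|^{-d-sp}\,\dd y$ by $h^d|y_\alpha|^{-d-sp}$. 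I estimate (a) via the Lipschitz bound $|J_p(\phi(x+y)-\phi(x))|\lesssim |y|^{p-1}$ on $|y|\le 2r$ combined with integration against the respective weights, producing contributions of order $hr^{p(1-s)-2}$ from both the singular and the tail sides. For (b) I invoke the $C^1$ bound $|J_p'(\xi)|\lesssim |\xi|^{p-2}$ when $p\ge 2$ and the H\"older bound $|J_p(a)-J_p(b)|\lesssim |a-b|^{p-1}$ when $p\in(1,2)$, together with $\phi\in C^4_b(B_R(x))$, to obtain local oscillation bounds whose integration against the weights reproduces the same rate. Finally, (c) is the standard midpoint error for the smooth kernel $|y|^{-d-sp}$ on cubes bounded away from the origin, yielding a lower-order $O(h^2|y_\alpha|^{-d-sp-2})$ per cube. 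Summing, $E_2(r,h)\lesssim hr^{p(1-s)-2}$ up to lower-order corrections.

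It remains to choose $\mu$ so that $r\asymp h^\mu$ drives both errors to zero. Since $\gamma+p(1-s)>0$, any $\mu\in(0,1]$ already makes $E_1\to 0$, so the binding constraint is $1+\mu(p(1-s)-2)>0$; this holds for every $\mu\in(0,1]$ when $p(1-s)\ge 2$, and for any $\mu\in\bigl(0,(2-p(1-s))^{-1}\bigr)$ otherwise. In particular, $\mu=1/2$ always works. The main technical obstacle will be the careful bookkeeping in the boundary shell $B_r\triangle\tilde B_r$, where both the singular prefactor $(1-s)^{-1}r^{-d-sp}$ and the tail kernel $|y|^{-d-sp}$ concentrate; the sub-quadratic range $p\in(1,2)$, in which $J_p$ is only H\"older continuous, will force a H\"older substitute for the Lipschitz-type midpoint estimate, but does not alter the final rate.
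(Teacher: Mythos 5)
Your decomposition is exactly the paper's: bridge through $\mathcal{M}_r^{s,p}$, split the quadrature error into a boundary-shell mismatch $B_r\triangle\tilde B_r$, a midpoint oscillation inside each $Q_\alpha$, and (for $i=2$) a kernel substitution error; this is precisely what Lemmas~\ref{lem:ballorig}--\ref{estT2} do. Two remarks. First, a minor improvement you missed: in the near region one can symmetrize for free (since $B_r$ and $\tilde B_r$ are symmetric) and work with $D_y[\phi]=O(|y|^p)$ rather than the unsymmetrized $J_p(\phi(x+y)-\phi(x))=O(|y|^{p-1})$, which sharpens the boundary-shell bound from $hr^{p(1-s)-2}$ to $hr^{p(1-s)-1}$; this is cosmetic here, since either bound vanishes for suitable $\mu$.

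The real gap is your treatment of $p\in(1,2)$. You assert that the H\"older substitute $|J_p(a)-J_p(b)|\lesssim|a-b|^{p-1}$ ``does not alter the final rate'' and that ``$\mu=1/2$ always works.'' Both claims are false. With only H\"older control, the per-cube midpoint error is $O(h^{p-1})$, and summing it against the weights gives $O(h^{p-1}r^{-sp})$ for both the near region and the tail (as in the last cases of Lemma~\ref{lem:ballorig} and Lemma~\ref{estT1}), not $O(hr^{p(1-s)-2})$. With $r\asymp h^\mu$, the constraint becomes $\mu<(p-1)/(sp)$, which for $p$ near $1$ and $s$ near $1$ can be arbitrarily small; e.g.\ $p=1.1$, $s=0.9$ requires $\mu<0.101$, so $\mu=1/2$ fails. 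The existence of some $\mu\in(0,1]$ still follows — so the theorem is salvaged — but your stated rate and your ``always $\mu=1/2$'' shortcut would produce an incorrect quantitative estimate, and an incorrect proof of convergence under that specific choice of scaling. You need to carry the weaker $h^{p-1}r^{-sp}$ rate through the $p\in(1,2)$ branch and adjust $\mu$ accordingly.
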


Actually, we obtain precise error estimates in terms of $r$ and $h$ in the Theorem above, which in the best cases reach the order $O(h)$. For the sake of simplicity of the presentation, we state them in subsection \ref{subsec:orders}.

The case $p=2$ in Theorem \ref{DiscretOrder} does not need intermediate steps regarding asymptotic expansions, since the discretization of the local Laplacian is already of order $O(h^2)$. This gives a discretization of order $O(h^{2+2(1-s)})$ for the fractional Laplacian (see \cite{HuOb14,dTEnJa18}).

\subsection{Application: explicit finite difference scheme for the fractional Cauchy problem}\label{subs: aplication}
The third part of the paper is devoted to apply Theorem \ref{DiscretOrder} to solve via convergence of finite difference numerical schemes the following parabolic fractional problem
\begin{equation}\label{parabolic problem}
\begin{cases} \partial_t u(x,t)+(-\Delta)_p^{s}u(x, t)=f(x), & \quad (x,t) \in \mathbb{R}^{d}\times(0,T)=:Q_T\\ 
u(x, 0)=u_0(x),   & \quad x \in \mathbb{R}^{d},
\end{cases}
\end{equation}where $T > 0$ and  $f, u_0: \mathbb{R}^{d} \to \mathbb{R}$.  We assume the following conditions on the data
\begin{equation}\label{as:data}\tag{$\textup{A}_{u_0,f}$}
\textup{$u_0$ and $f$ are bounded and globally H\"{o}lder continuous with exponent $a \in (0, 1]$.}
\end{equation}
We will write
$$\Lambda_{u_0}(\delta):=L_{u_0}\delta^{a} \text{ and } \Lambda_{f}(\delta):=L_{f}\delta^{a},$$
where $L_{u_0}:= \max\left\lbrace l_{u_0}, \|u_0\|_{L^{\infty}(\R^d)}\right\rbrace > 0$ and $L_f:=\max \left\lbrace l_f, \|f\|_{L^{\infty}(\R^d)}\right\rbrace>0$, with $l_{u_0}$ and $l_f$ being the H\"{o}lder constants of $u_0$ and $f$, respectively. 

We introduce now the weights $\omega_\alpha=\omega_\alpha(h,r)$ for $\alpha \in \Z^d\setminus\{0\}$ associated to our discretization \eqref{Discret}. More precisely, let
\begin{equation}\label{as:weights}\tag{$\textup{A}_\omega$}
\omega_{\alpha}:=  \left\{ \begin{split}
\frac{(p+d)}{p(1-s)} \frac{h^d}{ r^{d+sp}} \quad &\textup{if} \quad |y_\alpha|<|r|\\
W_{\alpha,i} \quad  &\textup{if} \quad |y_\alpha|\geq|r|, \quad \textup{for either $i=1$ or $i=2$}.
\end{split}\right.
\end{equation}
with $W_{\alpha,i}$ defined in \eqref{def:Wj}.
Finally, we adopt the notation
\begin{equation}\label{eq:Lh}
\LL_h\phi(x):= \sum_{\alpha\not=0} J_p(\phi(x+y_\alpha)-\phi(x)) \omega_\alpha.
\end{equation}
which is just $-(-\Delta)_{p,i}^{s,h}$ given in \eqref{Discret} for either $i=1$ or $i=2$.

We will also introduce a discretization in time whose mesh parameter is $\tau = T/N$, with $N\in \mathbb{N}$. The time grid will be denoted by $\mathcal{T}_\tau:=\{t_j:=\tau j\}_{j=0}^{N}$.

We assume the following stability type hypothesis on discrete parameters:
\begin{equation}\label{as:cfl}\tag{$\textup{A}_{\textup{CFL}}$}
h=O(r), \quad \text{and }\quad 
\tau  \leq  K_{s,p, d}\times \left\{ \begin{split}r^{ 2s+(s-a)(p-2)} \quad &\textup{if} \quad  a<\frac{sp}{p-1}\\
\frac{r^{a}}{|\log(r)|} \qquad \quad \, \, \,  &\textup{if} \quad  a\geq\frac{sp}{p-1}
\end{split}\right.
\end{equation}
where 
\begin{equation}\label{eq:constantCFL}
K_{s,p,d}:=\frac{1}{(p-1)2^{p}C_{s,p, d}\left( L_{u_0}+TL_f +3\tilde{K}_2+1\right)^{p-2}}, 
\end{equation}
with $C_{s,p, d}$ and $\tilde{K}_2$ coming from Lemma \ref{lem:propwe} and Lemma \ref{lem: holder comb}, respectively. 

\begin{remark}
Note that for $a=s$ the CFL condition reads $\tau\lesssim r^{2s}$, recovering the usual stability conditions of the linear nonlocal equation (c.f. \cite{dTEnJa18})
\end{remark}

The  finite difference scheme associated to \eqref{parabolic problem} is defined as:
\begin{equation}\label{scheme}
\begin{cases} U_\alpha^{j}=U_\alpha^{j-1}+\tau \left(\LL_hU_\alpha^{j-1} +f_\alpha\right), & \quad \alpha \in \mathbb{Z}^{d}, j=1, ..., N\\ 
U_\alpha^{0}=(u_0)_\alpha, \qquad  \alpha \in \mathbb{Z}^{d},
\end{cases}
\end{equation}
with $U_\alpha^j=U(x_\alpha,t_j)$ where $U: \mathcal{G}_h\times \mathcal{T}_\tau  \to \R$, and
$(u_0)_\alpha:=u_0(x_\alpha), \, f_\alpha:=f(x_\alpha)$. 
Then, we have the following result.
\begin{theorem}\label{Solutionparabolic prob}
Let $p \in (2, \infty)$ and $s\in(0,1)$. Assume that \eqref{as:weights}, \eqref{as:data} and \eqref{as:cfl} hold and let $U$ be the solution of \eqref{scheme}. Then, for all sequences $h,\tau \to 0^{+}$,  there are   subsequences $h_{k}, \tau_k$ and  a function $u\in C_b(\overline{Q_T})$ such that
for any compact set $K\in Q_T$, we have
\[
\max_{(x_\alpha,t_j)\in K\cap (\mathcal{G}_{h_k}\times \mathcal{T}_{\tau_k})}|U_\alpha^j-u(x_\alpha,t_j)|\to 0 \quad \textup{as} \quad k\to \infty.
\]
Moreover, $u$ is a viscosity solution of problem \eqref{parabolic problem}.
\end{theorem}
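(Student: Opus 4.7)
The plan is to apply the Barles--Souganidis framework for convergence of finite difference schemes to viscosity solutions, complemented by an Arzela--Ascoli compactness argument to upgrade to the locally uniform convergence claimed in the statement. The four ingredients to verify are monotonicity of the scheme, $L^\infty$-stability, consistency (which is essentially Theorem \ref{DiscretOrder}), and equicontinuity in space and time uniform in the mesh parameters. For monotonicity, the off-diagonal dependence of $U^j_\alpha$ on $U^{j-1}_{\alpha+\beta}$ is increasing because $J_p$ is increasing and the weights $\omega_\beta$ in \eqref{as:weights} are positive; the diagonal coefficient $1-\tau(p-1)\sum_{\beta\neq 0}|U^{j-1}_{\alpha+\beta}-U^{j-1}_\alpha|^{p-2}\omega_\beta$ is nonnegative provided $U^{j-1}$ is uniformly $a$-H\"older with constant $\tilde K\asymp L_{u_0}+TL_f+\tilde K_2$ and $\tau$ satisfies exactly the CFL condition \eqref{as:cfl}. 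This is where Lemma \ref{lem:propwe} (controlling the weights $\omega_\beta$) and Lemma \ref{lem: holder comb} (recombining H\"older differences raised to the power $p-2$) enter, and the two regimes in \eqref{as:cfl}, including the logarithmic correction at $a=sp/(p-1)$, arise from the two ways the relevant sum can behave. Once monotonicity is in hand, the $L^\infty$-stability $\|U^j\|_\infty\leq L_{u_0}+TL_f$ follows from discrete comparison with affine-in-time super- and subsolutions.

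The equicontinuity step splits into spatial and temporal parts. Spatially, the scheme is equivariant under integer grid translations, so for any shift $\beta\in\Z^d$ the sequence $\tilde U^j_\alpha:=U^j_{\alpha+\beta}$ solves the same scheme with translated data; monotonicity then yields
\[|U^j_\alpha-U^j_{\alpha+\beta}|\leq \|u_0-u_0(\cdot+y_\beta)\|_\infty+T\|f-f(\cdot+y_\beta)\|_\infty\leq(L_{u_0}+TL_f)|y_\beta|^a\]
uniformly in $j$. This both justifies the a priori H\"older hypothesis used in the monotonicity step (by induction on $j$, starting from $U^0=u_0$) and provides the spatial modulus for compactness. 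Temporally, the scheme gives $U^{j+1}_\alpha-U^j_\alpha=\tau(\LL_h U^j_\alpha+f_\alpha)$, and we bound $|\LL_h U^j_\alpha|$ by splitting the sum defining $\LL_h$ at $|y_\alpha|=r$ and further at a crossover radius $R^*\asymp(\|U\|_\infty/\tilde K)^{1/a}$ where the H\"older estimate meets the $L^\infty$ bound; the near-origin contribution scales like $r^{a(p-1)-sp}$, while the tail produces a convergent or logarithmic integral according to whether $a<sp/(p-1)$ or $a\geq sp/(p-1)$. Multiplying by $\tau$ according to \eqref{as:cfl} yields a uniform-in-mesh modulus of continuity in time.

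With stability and equicontinuity in hand, the piecewise-constant extensions $\bar U_{h,\tau}:\R^d\times[0,T]\to\R$ form a uniformly bounded, locally equicontinuous family, and Arzela--Ascoli produces a subsequence converging locally uniformly to some $u\in C_b(\overline{Q_T})$. To identify $u$ as a viscosity solution we run the standard Barles--Souganidis argument: given a smooth test function $\varphi$ touching $u$ from above at $(x_0,t_0)\in Q_T$, locate discrete maximizers $(x_{\alpha_k},t_{j_k})$ of $\bar U_{h_k,\tau_k}-\varphi$ approaching $(x_0,t_0)$, use monotonicity to replace $U^j$ by $\varphi$ plus a constant in $\LL_h$ at these points, and then apply the consistency Theorem \ref{DiscretOrder} to pass to the limit, yielding the viscosity subsolution inequality $\partial_t\varphi+(-\Delta)_p^s\varphi\leq f$ at $(x_0,t_0)$; the supersolution case is symmetric. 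The main obstacle is the temporal equicontinuity step of the previous paragraph: the nonlinearity of $J_p$ combined with the competition between the H\"older exponent $a$ and the critical exponent $sp/(p-1)$ forces the two-regime structure of \eqref{as:cfl}, and the constant $K_{s,p,d}$ in \eqref{eq:constantCFL} must be carefully calibrated to absorb the prefactors so that the induction on $j$ closes and the modulus of continuity is genuinely uniform in $(h,\tau)$.
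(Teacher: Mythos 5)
Your proposal correctly identifies the overall Barles--Souganidis shape of the argument (monotonicity, stability, consistency, compactness via Arzel\`a--Ascoli, then identification of the limit as a viscosity solution), and the monotonicity, $L^\infty$-stability, and spatial equicontinuity steps are handled in essentially the same way as the paper (Lemma \ref{lem:propwe} and Lemma \ref{lem:equi}); the translation-invariance shortcut you use for the spatial modulus is a clean equivalent of the paper's induction. The viscosity identification via the monotone map $g(\xi)$ is also the same idea.

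The gap is in the temporal equicontinuity step, and it is a genuine one. Your plan is to bound $|\LL_h U^j|$ directly by splitting the sum at $|y_\alpha|=r$ and at a crossover radius $R^*$, which yields $|\LL_h U^j|\lesssim S_{a(p-1)}(r)$; you then ``multiply by $\tau$'' and invoke \eqref{as:cfl}. But this produces only the \emph{per-step} bound $|U^{j+1}-U^j|\lesssim r^a$, and summing over $k$ steps gives $|U^{j+k}-U^j|\lesssim t_k\, S_{a(p-1)}(r)+t_k\|f\|_\infty$. When $a(p-1)<sp$ the factor $S_{a(p-1)}(r)=r^{a(p-1)-sp}$ diverges as $r\to0^+$, so for a fixed time gap $t_k$ this is \emph{not} a modulus of continuity uniformly in the mesh parameters: the right-hand side blows up as the mesh is refined. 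The CFL condition bounds $\tau$ by $r$, not $t_k$ by $r$, so it cannot rescue this. This is exactly why the paper proves \emph{two} time estimates in Lemma \ref{lem: holder comb}: the mesh-dependent estimate $\tilde\Lambda_{u_0,f,r}(t_k)=\tilde K_2\, t_k\, S_{a(p-1)}(r)+\|f\|_\infty t_k$ (your direct bound, used only per time step in the viscosity argument where the extra $\tau$ is really available), and the mesh-\emph{independent} estimate $\overline\Lambda_{u_0,f}(t_k)=\tilde K_1\, t_k^{a/(2+(1-a)(p-2))}+\|f\|_\infty t_k$, which is what Arzel\`a--Ascoli actually requires. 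The latter is obtained by mollifying the initial data ($u_0\rightsquigarrow u_{0,\delta}$), applying $\LL_h$ to the \emph{smooth} $u_{0,\delta}$ so that second-order difference estimates give a bound of order $\delta^{(a-1)(p-2)+a-2}$ with no $r$-dependence, comparing $U$ with $U_\delta$ via the continuous dependence Lemma \ref{Equi 1}, and optimizing over $\delta$. Without this regularization-and-optimization device your compactness argument does not close, and the extracted subsequence need not converge.

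A secondary, minor point: in Lemma \ref{lem:equi} the H\"older constant carried by $U^j$ is $L_{u_0}+t_j L_f$ (the constant $\tilde K_2$ from Lemma \ref{lem: holder comb} enters only later, in the estimate \eqref{est visc 1} inside the viscosity identification), and the nonnegativity of the diagonal coefficient in the stability proof is $1-\tau\sum_{\beta\neq0}|U_{\alpha+\beta}^j-U_\alpha^j|^{p-2}\omega_\beta\geq0$ without a factor $(p-1)$; the $(p-1)$ appears only when one differentiates $J_p$ in $g'(\xi)$. These do not affect the structure of the argument but do affect the precise calibration of $K_{s,p,d}$, which you rightly note must absorb the prefactors.
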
 
See Definition \ref{def:visc} for more details on the notion of viscosity solutions. Other results for the numerical scheme will be stated and proven in Section \ref{sec:app}. In particular, Theorem \ref{Solutionparabolic prob} shows existence of viscosity solutions of \eqref{parabolic problem}. %Uniqueness of viscosity solutions is still an open question. Once such a result would be obtained, full convergence (not only up to a subsequence) of the numerical scheme follows too.

 \section{Asymptotic expansion and orders in the MVP}\label{sec:asymptoticexporder}
 
 The goal of this section is to prove Theorem \ref{asympExpUnif}, Theorem \ref{asympExp} and Theorem \ref{thm:1Dresult}. \normalcolor Let $\phi:\R^d\to \R$ and $y\in \R^d$, $y\not=0$. Let us define, for $p>1$, 
 \begin{equation}\label{defDy}
 D_y[\phi](x) := \frac{J_p(\phi(x+y)-\phi(x))+J_p(\phi(x-y)-\phi(x))}{|y|^p}
 \end{equation}
 where $J_p:\R\to\R$ is given by $J_p(\xi):=|\xi|^{p-2}\xi$. 
The strategy will be the following (avoiding all constants and approximation errors):
\smallskip

\noindent {\bf Step 1.} Use the nonlinear finite difference-type operator \eqref{defDy} to obtain an approximation of the form
$$|y|^pD_y[\phi](x)\approx |\nabla \phi(x)\cdot y|^{p-2}y^TD^2\phi(x) y.$$
{\bf Step 2.} Integrate this expression with respect to the fractional measure to obtain the local $p$-Laplacian
$$\int_{B_r}D_y[\phi](x)\frac{\dd y}{|y|^{d-(1-s)p}}\approx \int_{B_r} |\nabla \phi(x)\cdot y|^{p-2}y^TD^2\phi(x) y\frac{\dd y}{|y|^{d+sp}}\approx \Delta_p\phi(x)r^{p(1-s)}.$$
{\bf Step 3.} In a similar way, 
$$\mathcal{M}^p_{r,j}[\phi](x)\approx\frac{1}{r^p}\fint_{B_r}D_y[\phi](x)\dd y\approx\Delta_p\phi(x).$$
{\bf Step 4.} Combine this information to conclude 
\begin{equation*}\begin{split}
-(-\Delta)^s_p\phi(x)&\approx\frac 12\int_{B_r}D_y[\phi](x)\frac{\dd y}{|y|^{d-(1-s)p}}+\int_{\R^d\setminus B_r}J_p(\phi(x+y)-\phi(x))\frac{\dd y}{|y|^{d+sp}}\\
&\approx \mathcal{M}^p_{r,2}[\phi](x)r^{p(1-s)}+\int_{\R^d\setminus B_r}J_p(\phi(x+y)-\phi(x))\frac{\dd y}{|y|^{d+sp}}\approx\mathcal{M}^{s,p}_{r}[\phi](x).
\end{split}\end{equation*}
This leads to the asymptotic expansion and results for the fractional $p$-Laplacian given in Theorem \ref{asympExpUnif}, Theorem \ref{asympExp} and Theorem \ref{thm:1Dresult}.

Depending on the range of $p$, these steps may be more or less involved. In the case $p\geq 4$, Step 1 is uniformly obtained and the integration is done straightforward, while in the other cases more refined arguments are required to deal with the points where the gradient vanishes. To prove Steps 1-3 we devote a subsection for every range of $p$ and we complete Step 4 at the end of the section. The case $d=1$ is treated separately. 

We start with two auxiliary results that will allow us to perform Step 2 and Step 3.

\begin{lemma}\label{lem:intestim}
Let $\alpha>-1$, $d\geq2$ and $\nabla\phi(x)\not=0$. Consider $\phi\in C^1_b(B_R(x))$ for some $x\in \R^d$ and $0<r< R<1$. Then
\begin{equation}\label{grad phi ineq}
P.V. \fint_{B_r}|\nabla \phi(x)\cdot \frac{y}{|y|}|^\alpha dy\leq \gamma_{d,\alpha}|\nabla\phi(x)|^{\alpha},
\end{equation}
and, for $p\in(1,\infty)$ and $s\in(0,1)$, we have
\[
P.V. \int_{B_r}|\nabla \phi(x)\cdot \frac{y}{|y|}|^\alpha \frac{dy}{|y|^{d-(1-s)p}}\leq \gamma_{s,p,d,\alpha}|\nabla\phi(x)|^{\alpha} r^{(1-s)p}.
\]
\end{lemma}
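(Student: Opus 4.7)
}
The plan is to reduce both inequalities to a single finite angular integral over $S^{d-1}$ by passing to polar coordinates, after rotating so that $\nabla\phi(x)$ points along a fixed coordinate axis. Write $y=\rho\theta$ with $\rho\in(0,r]$ and $\theta\in S^{d-1}$, so $y/|y|=\theta$. Since $\nabla\phi(x)\neq 0$, I may choose an orthogonal change of variables mapping $\nabla\phi(x)/|\nabla\phi(x)|$ to $e_1$, which preserves the surface measure on $S^{d-1}$. This reduces the angular factor to $|\nabla\phi(x)|^{\alpha}|\theta_1|^{\alpha}$, so in polar coordinates each integrand splits into a radial weight times $|\nabla\phi(x)|^{\alpha}|\theta_1|^{\alpha}$.

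The radial pieces are elementary: for the first inequality, $\int_0^r\rho^{d-1}\dd\rho=r^d/d$, and for the second, $\int_0^r \rho^{d-1-(d-(1-s)p)}\dd \rho=\int_0^r\rho^{(1-s)p-1}\dd\rho=r^{(1-s)p}/((1-s)p)$, where the latter is finite because $s<1$ and $p>1$. Dividing the first by $|B_r|=\omega_d r^d$ eliminates the dependence on $r$ altogether. So the whole estimate boils down to showing that
\begin{equation*}
c_{d,\alpha}:=\int_{S^{d-1}} |\theta_1|^{\alpha}\dd\sigma(\theta)<\infty.
\end{equation*}

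The key (and only potentially delicate) point is the finiteness of $c_{d,\alpha}$ for $\alpha>-1$ when $d\geq 2$: this is exactly where the hypothesis $d\geq 2$ is used, and it is also what justifies interpreting the ``$P.V.$'' in the statement as an absolutely convergent integral. Slicing $S^{d-1}$ by hyperplanes $\{\theta_1=t\}$ gives
\begin{equation*}
c_{d,\alpha}=|S^{d-2}|\int_{-1}^{1}|t|^{\alpha}(1-t^2)^{(d-3)/2}\dd t,
\end{equation*}
which is finite whenever $\alpha>-1$ and $d\geq 2$ (in $d=2$ the integrand has the extra integrable factor $(1-t^2)^{-1/2}$ at the endpoints, while near $t=0$ integrability is controlled by $\alpha>-1$). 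Setting $\gamma_{d,\alpha}:=c_{d,\alpha}/(d\omega_d)$ and $\gamma_{s,p,d,\alpha}:=c_{d,\alpha}/((1-s)p)$ gives both inequalities (in fact, with equality).

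I expect no real obstacle: the main check is just the Fubini/slicing argument that $c_{d,\alpha}$ is finite, and that the principal-value symbol is harmless in the range $\alpha>-1$, $d\geq 2$. The only care needed is to state clearly why the exceptional set $\{\theta_1=0\}$ (where the integrand blows up when $\alpha<0$) contributes nothing, which is immediate from the slicing formula above.
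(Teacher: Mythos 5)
Your proof is correct and follows essentially the same route as the paper: rotate so that $\nabla\phi(x)$ points along $e_1$, pass to polar coordinates, factor out the radial integral, and reduce everything to the finiteness of $\int_{S^{d-1}}|\theta_1|^\alpha\dd\sigma(\theta)$ for $\alpha>-1$, $d\ge 2$. Your slicing identity $c_{d,\alpha}=|S^{d-2}|\int_{-1}^1|t|^\alpha(1-t^2)^{(d-3)/2}\dd t$ is in fact a cleaner and more precise rendering of the same angular computation (the paper's $\pi^{d-2}\int_0^\pi|\cos\theta|^\alpha\dd\theta$ is a loose upper bound that drops the $\sin^{d-2}$ Jacobian factor), so no change of approach and no gap.
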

\begin{proof}
Assume that $\nabla \phi(x)=ce_1$ with $c\not=0$, and thus, $|\nabla\phi(x)|=c$. Changing to polar coordinates:
\begin{equation*}
\begin{split}
I_1:=&\fint_{B_r}|\nabla \phi(x)\cdot \frac{y}{|y|}|^\alpha dy
= \frac{|c|^\alpha}{\omega_d r^d}\int_{B_r}\frac{|y_1|^\alpha}{|y|^\alpha}dy
 =  \frac{|c|^\alpha}{\omega_d r^d} \int_0^{2\pi} \int_0^r\frac{|\rho\cos(\theta)|^\alpha}{\rho^\alpha}\rho^{d-1} \pi^{d-2}\dd \rho \dd \theta\\
=& \frac{2\pi^{d-2} |c|^\alpha}{d\omega_d} \int_0^\pi |\cos(\theta)|^{\alpha} \dd\theta
\end{split}
\end{equation*}
Changing now $\tau=\cos(\theta)$, i.e., $\dd\theta=-\dd \tau/\sqrt{1-\tau^2}$, we get
\[
\begin{split}
I_1&\leq \frac{2\pi^{d-2} |c|^\alpha}{d\omega_d} \int_0^1 \frac{\tau^\alpha}{\sqrt{1-\tau^2}}\dd\tau\leq \gamma_{d,\alpha} |c|^\alpha 
\end{split}
\]
In a similar way,
\begin{equation*}
\begin{split}
I_2:=&\int_{B_r}|\nabla \phi(x)\cdot \frac{y}{|y|}|^\alpha \frac{dy}{|y|^{d-(1-s)p}}
=|c|^\alpha\int_{B_r}\frac{|y_1|^\alpha}{|y|^{\alpha+d-(1-s)p}}dy\\
= &|c|^\alpha \int_0^{2\pi} \int_0^r\frac{|\rho\cos(\theta)|^\alpha}{\rho^{\alpha+d-(1-s)p}}\rho^{d-1} \pi^{d-2}\dd \rho \dd \theta
\leq \gamma_{s,p,d,\alpha} |c|^\alpha 
 r^{(1-s)p}.
\end{split}
\end{equation*}
\end{proof}

\begin{lemma}\label{lem:niceidentity}
Let $p\in(1,\infty)$, $d \geq 1$, and $s\in(0,1)$. Consider $\phi\in C^2_b(B_R(x))$ for some $x\in \R^d$ and $0<R<1$ and such that $\nabla \phi(x)\not=0$ when $p\in(1,2)$. Then,
\begin{equation}\label{J1}
a_{s,p,d}\,P.V.\int_{B_r}(p-1)|\nabla \phi(x)\cdot y|^{p-2}y^TD^2\phi(x) y \frac{\dd y}{|y|^{d+sp}}= \Delta_p\phi(x) r^{p(1-s)}. 
\end{equation}
with $a_{s,p,d}:=(\frac{1}{p(1-s)}  \int_{\partial B_1} |y_1|^{p} \dd \sigma(y))^{-1}.$ We also have that
\begin{equation}\label{J2}
\begin{split}
a_{p,d}\fint_{\partial B_r} (p-1)|\nabla \phi(x)\cdot y|^{p-2}y^TD^2\phi(x) y \dd \sigma(y)& =\Delta_p \phi(x)r^p,\\ 
\frac{(p+d)a_{p,d}}{d}\fint_{B_r} (p-1)|\nabla \phi(x)\cdot y|^{p-2}y^TD^2\phi(x) y \dd y & = \Delta_p \phi(x)r^p,
\end{split}
\end{equation}
with $a_{p,d}:= \left(\fint_{\partial B_1} |y_1|^{p} \dd \sigma(y) \right)^{-1}$.
\end{lemma}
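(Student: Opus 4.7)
The plan is to reduce all three identities to the same angular integral over $\partial B_1$ via scaling, and then evaluate that angular integral by aligning $\nabla\phi(x)$ with $e_1$ and applying a classical beta-function identity on the sphere. First I would substitute $y=r\xi$ (or $y=r\omega$ in the surface-integral case) in each integrand. Since the integrands are $p$-homogeneous in $y$ through the combination $|\nabla\phi(x)\cdot y|^{p-2}\,y^T D^2\phi(x)y$, a direct exponent count factors out exactly $r^{p(1-s)}$ in \eqref{J1} (combining $r^p$ from the numerator, $r^{-d-sp}$ from $|y|^{-(d+sp)}$ and $r^d$ from $\dd y$) and $r^p$ in both cases of \eqref{J2}. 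Polar coordinates $\xi=\rho\omega$ then separate a trivial radial integral ($\int_0^1\rho^{p(1-s)-1}\dd\rho=1/(p(1-s))$ for \eqref{J1}, and $\int_0^1\rho^{p+d-1}\dd\rho=1/(p+d)$ for the volume version of \eqref{J2}) from the angular integral over $\partial B_1$. All three identities thereby reduce to computing
\[
I(x):=\int_{\partial B_1}|\nabla\phi(x)\cdot\omega|^{p-2}\,\omega^T D^2\phi(x)\,\omega\,\dd\sigma(\omega).
\]

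To evaluate $I(x)$ for $d\geq 2$, set $v:=\nabla\phi(x)$ and $A:=D^2\phi(x)$, and rotate so that $v=|v|e_1$. Then $|v\cdot\omega|^{p-2}=|v|^{p-2}|\omega_1|^{p-2}$, and the sphere symmetry $\omega_k\mapsto-\omega_k$ kills every off-diagonal contribution from $\omega^T A\omega=\sum_{i,j}\tilde A_{ij}\omega_i\omega_j$, where $\tilde A$ denotes the Hessian in the rotated basis. Setting
\[
I_1:=\int_{\partial B_1}|\omega_1|^p\dd\sigma,\qquad I_2:=\int_{\partial B_1}|\omega_1|^{p-2}\omega_j^2\dd\sigma\qquad(j\neq 1),
\]
with $I_2$ independent of $j\neq 1$ by the residual rotational symmetry, this yields
\[
I(x)=|v|^{p-2}\bigl[(I_1-I_2)\tilde A_{11}+I_2\operatorname{tr}(\tilde A)\bigr]=|v|^{p-2}\bigl[(I_1-I_2)\tfrac{v^T A v}{|v|^2}+I_2\Delta\phi(x)\bigr],
\]
using $\tilde A_{11}=v^T A v/|v|^2$ and $\operatorname{tr}(\tilde A)=\Delta\phi(x)$. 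The decisive algebraic identity is $I_1=(p-1)I_2$: from $\sum_{j\geq 2}\omega_j^2=1-\omega_1^2$ we have $(d-1)I_2=J-I_1$ with $J:=\int_{\partial B_1}|\omega_1|^{p-2}\dd\sigma$, and the classical formula $\int_{\partial B_1}|\omega_1|^a\dd\sigma=2\pi^{(d-1)/2}\Gamma(\tfrac{a+1}{2})/\Gamma(\tfrac{a+d}{2})$ together with $\Gamma(z+1)=z\Gamma(z)$ gives $I_1/J=(p-1)/(p+d-2)$, hence $I_1=(p-1)I_2$. Therefore $I_1-I_2=(p-2)I_2$ and
\[
I(x)=I_2\bigl[(p-2)|v|^{p-4}v^T A v+|v|^{p-2}\Delta\phi(x)\bigr]=I_2\,\Delta_p\phi(x).
\]

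Collecting the constants then closes each identity. For \eqref{J1}, multiplying through by the radial prefactor and the $(p-1)$ out front gives $\tfrac{a_{s,p,d}(p-1)I_2}{p(1-s)}\Delta_p\phi(x)r^{p(1-s)}=\tfrac{a_{s,p,d}I_1}{p(1-s)}\Delta_p\phi(x)r^{p(1-s)}=\Delta_p\phi(x)r^{p(1-s)}$ by the very definition of $a_{s,p,d}$ and of $I_1=\int_{\partial B_1}|y_1|^p\dd\sigma$. The surface identity in \eqref{J2} is analogous using $a_{p,d}=(\fint_{\partial B_1}|y_1|^p\dd\sigma)^{-1}$, and the volume identity in \eqref{J2} reduces to the surface case through the elementary fact $\fint_{B_1}g(\xi)\dd\xi=\tfrac{d}{p+d}\fint_{\partial B_1}g(\omega)\dd\sigma(\omega)$ valid for any $p$-homogeneous $g$. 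The one-dimensional case requires no rotation: $\partial B_1=\{\pm 1\}$, the integrand is even in $y$, and $\int_{\partial B_1}|y_1|^p\dd\sigma=2$, so both the radial reduction in \eqref{J1} and the two-point averages in \eqref{J2} are immediate. The main obstacle is the range $p\in(1,2)$: there $|v\cdot\omega|^{p-2}$ is singular on the equator $\{v\cdot\omega=0\}$ and $|v|^{p-2}$ blows up when $v=0$, so the hypothesis $\nabla\phi(x)\neq 0$ is essential for the left-hand side and $\Delta_p\phi(x)$ to be simultaneously well-defined, while the integrability of $|\omega_1|^{p-2}$ near the equator holds precisely because $p>1$. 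For $p\geq 2$ all quantities are pointwise finite and the computation carries through with no singular-integral subtlety.
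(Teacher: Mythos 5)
Your proof is correct and follows essentially the same route as the paper: reduce by polar coordinates to an angular integral, align $\nabla\phi(x)$ with $e_1$, use the sphere's reflection symmetry to discard off-diagonal Hessian contributions, and apply the moment relation $(p-1)\int_{\partial B_1}|\omega_1|^{p-2}\omega_j^2\,\dd\sigma=\int_{\partial B_1}|\omega_1|^p\,\dd\sigma$. The only real addition on your side is that you actually derive this moment identity from the Gamma-function formula, whereas the paper simply asserts it; that is a welcome filling-in of a classical detail, not a different method.
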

\begin{proof}Let $d\geq2$ for now. If $p>2$ and $\nabla\phi(x)=0$ the identity trivially holds since both sides in \eqref{J1} are zero, so we consider the case $\nabla \phi(x)\neq 0$. Assume without loss of generality that $\nabla \phi(x)=ce_1$ for some $c>0$. Then, for $0 <\varepsilon < r$,
\begin{equation*}\begin{split}
\int_{B_r\setminus B_\varepsilon} (p-1)&|\nabla \phi(x)\cdot y|^{p-2}y^TD^2\phi(x) y \frac{\dd y}{|y|^{d+sp}}\\
&=c^{p-2} \int_\varepsilon^r \rho^{-d-sp}(p-1)\int_{\partial B_\rho}|y_1|^{p-2}  \left(\sum_{i=1}^d y_i^2 D_{ii}\phi(x)\right)\dd \sigma(y) \dd \rho. 
\end{split}\end{equation*}
Furthermore,
\[
(p-1) \int_{\partial B_\rho} |y_1|^{p-2}y_i^2 \dd \sigma(y)= \int_{\partial B_\rho} |y_1|^{p} \dd \sigma(y) = \rho^{d+p-1} \int_{\partial B_1} |y_1|^{p} \dd \sigma(y) \quad \textup{for all} \quad i=2,\ldots,d,
\]
and
\[
\begin{split}
\int_{\partial B_\rho}|y_1|^{p-2}  \left(\sum_{i=1}^d y_i^2 D_{ii}\phi(x)\right)\dd \sigma(y)&= D_{11}\phi(x) \int_{\partial B_\rho} |y_1|^{p} \sigma(y) + \sum_{i=2}^d D_{ii} \phi(x) \int_{\partial B_\rho} |y_1|^{p-2}y_i^2 \dd \sigma(y)\\
&=\left(D_{11}\phi(x)  + \frac{1}{p-1} \sum_{i=2}^d D_{ii} \phi(x)  \right) \rho^{d+p-1} \int_{\partial B_1} |y_1|^{p} \dd \sigma(y)\\
&=\frac{\rho^{d+p-1} }{p-1} \left(\sum_{i=1}^d D_{ii} \phi(x)  + (p-2) D_{11}\phi(x) \right) \int_{\partial B_1} |y_1|^{p} \dd \sigma(y).
\end{split}
\]
Finally, we note that
\[
c^{p-2}\left( \sum_{i=1}^d D_{ii} \phi(x)  + (p-2) D_{11}\phi(x)\right)= \Delta_p\phi(x),
\]
to get
\[
\begin{split}
\int_{B_r\setminus B_\varepsilon} (p-1)|\nabla \phi(x)\cdot y|^{p-2}y^TD^2\phi(x) y \frac{\dd y}{|y|^{d+sp}}&= \Delta_p\phi(x)  \int_{\partial B_1} |y_1|^{p} \dd \sigma(y) \int_{\varepsilon}^r \rho^{-1+p(1-s)}\dd \rho\\
&= a_{d,p,s}^{-1} \Delta_{p}\phi(x) r^{p(1-s)}+ o_{\varepsilon}(1).
\end{split}
\]
As $\varepsilon \to 0$, we obtain \eqref{J1}.

Proceeding analogously,
\[
\begin{split}
\fint_{B_r} (p-1)|\nabla \phi(x)\cdot y|^{p-2}y^TD^2\phi(x) y \dd y&= \frac{c^{p-2}}{\omega_d r^d} \int_0^r (p-1)\int_{\partial B_\rho}|y_1|^{p-2}  \left(\sum_{i=1}^d y_i^2 D_{ii}\phi(x)\right)\dd \sigma(y) \dd \rho\\
&=\frac{1}{\omega_d r^d} \Delta_p\phi(x) \int_{\partial B_1} |y_1|^{p} \dd \sigma(y) \int_0^r \rho^{d+p-1}\dd\rho\\
&= r^{p} \frac{d}{p+d}a_{p,d}^{-1} \Delta_p \phi(x).
\end{split}
\]
The above computations also hold in dimension $d=1$ in a simpler way.
\end{proof}

 \subsection{Case $p\geq 4$.} To prove Step 1 we start by doing a Taylor expansion up to order three of $J_p$, which in this case can be done avoiding singular terms.
 
 \begin{lemma}\label{Approx p greater 4}
Let $p\geq 4$. Consider a function $\phi\in C^4_b(B_R(x))$ for some $x\in \R^d$ and $0<R<1$. Then, for any $y\in B_{\frac{R}{2}}(0)\setminus\{0\}$ we have
\[
|D_y[\phi](x)- \frac{(p-1)}{|y|^p}|\nabla \phi(x)\cdot y|^{p-2}y^TD^2\phi(x) y|\leq C(p,d,\|\phi\|_{C^4_b(B_R(x))})|y|^2.
\]
 \end{lemma}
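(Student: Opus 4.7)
The plan is to carry out parallel Taylor expansions in two variables---first of $\phi$ around $x$, then of $J_p$ around $\pm a$---and exploit the parities of $J_p$ and its derivatives to isolate the target leading term while keeping every remainder of order at least $|y|^{p+2}$ (so that, after dividing by $|y|^p$, the error is $O(|y|^2)$).

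More precisely, using the $C^4_b$ regularity of $\phi$, I would first write
\[
\phi(x\pm y)-\phi(x)=\pm a+b\pm c+R_\pm,\qquad a:=\nabla\phi(x)\cdot y,\ b:=\tfrac{1}{2}y^TD^2\phi(x)y,\ c:=\tfrac{1}{6}D^3\phi(x)(y,y,y),
\]
with $|R_\pm|\leq C_\phi|y|^4$. Setting $\epsilon_\pm:=\pm c+b+R_\pm=O(|y|^2)$ I would then apply a second-order Taylor expansion of $J_p$ around $a$ for the first summand of $D_y[\phi](x)$ and around $-a$ for the second, with Lagrange remainder involving $J_p'''$ evaluated at intermediate points $\xi_\pm$. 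Since $p\geq 4$, the derivatives $J_p'$, $J_p''$, $J_p'''$ are continuous on $\R$ and satisfy $J_p'(-a)=J_p'(a)$ and $J_p''(-a)=-J_p''(a)$, so that adding the two expansions produces
\[
J_p(\phi(x+y)-\phi(x))+J_p(\phi(x-y)-\phi(x))=J_p'(a)(\epsilon_++\epsilon_-)+\tfrac{1}{2}J_p''(a)(\epsilon_+^2-\epsilon_-^2)+\tfrac{1}{6}\bigl(J_p'''(\xi_+)\epsilon_+^3+J_p'''(\xi_-)\epsilon_-^3\bigr).
\]

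From here the target is readable directly: $\epsilon_++\epsilon_-=2b+O(|y|^4)$ and $\epsilon_+-\epsilon_-=2c+O(|y|^4)=O(|y|^3)$, so the first term yields $2J_p'(a)b=(p-1)|\nabla\phi(x)\cdot y|^{p-2}\,y^TD^2\phi(x)y$, i.e.\ the desired leading contribution, plus an error $J_p'(a)(R_++R_-)$. The other two summands are pure errors. The bounds $|J_p'(a)|\leq C|y|^{p-2}$ and $|J_p''(a)|\leq C|y|^{p-3}$ (the latter valid since $p\geq 4>3$), combined with $|\epsilon_+^2-\epsilon_-^2|=|(\epsilon_++\epsilon_-)(\epsilon_+-\epsilon_-)|=O(|y|^5)$, deliver $O(|y|^{p+2})$ contributions in a direct way.

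The main obstacle is the third-derivative remainder, because $\xi_\pm$ can be arbitrarily close to zero---most notably when $\nabla\phi(x)=0$, in which case $a=0$ and $\xi_\pm=O(|y|^2)$. The crucial observation is that for $p\geq 4$ the identity $J_p'''(\xi)=(p-1)(p-2)(p-3)|\xi|^{p-4}$ has a non-negative exponent, so $J_p'''$ is continuous on $\R$ and the bound $|\xi_\pm|\leq|a|+|\epsilon_\pm|\leq C_\phi|y|$ yields $|J_p'''(\xi_\pm)|\leq C|y|^{p-4}$ (to be read as the constant $6$ when $p=4$). Together with $|\epsilon_\pm|^3=O(|y|^6)$, this produces exactly the $O(|y|^{p+2})$ bound required for the third-order remainder. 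Collecting all terms and dividing by $|y|^p$ would then close the proof; this last step is precisely where the hypothesis $p\geq 4$ is used in a critical way.
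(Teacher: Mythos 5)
Your proof is correct and takes essentially the same approach as the paper: a fourth-order Taylor expansion of $\phi$ followed by a Taylor expansion of $J_p$, with the parities of $J_p^{(k)}$ killing the unwanted terms and the bounds $|J_p^{(k)}(\xi)|\leq C|\xi|^{p-1-k}$ (valid for $k\leq 3$ precisely when $p\geq 4$) controlling the remainders. The only difference is organizational: you perform a single second-order Taylor expansion of $J_p$ around $\pm\nabla\phi(x)\cdot y$ with the Lagrange $J_p'''$ remainder, whereas the paper peels off the $O(|y|^4)$ remainder, the cubic term, and the quadratic term of $\phi$ in three successive expansions; your single-pass version is arguably a bit cleaner and leads to the identical $O(|y|^{p+2})$ bound.
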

 
 \begin{proof}
By doing a fourth-order Taylor expansion we can write
\begin{equation*}
\begin{split}
\phi(x\pm y)-\phi(x)&=\underbrace{\pm\nabla \phi(x)\cdot y+\frac{1}{2}y^{T}D^{2}\phi(x)y \pm \frac{1}{3!}\sum_{|\alpha|=3}D^{\alpha}\phi(x)y^{\alpha}}_{\phi_{13}^{\pm}(y)}+O(|y|^{4})
\end{split}
\end{equation*}
and thus
\begin{equation}\label{Taylor11}
\begin{split}
|y|^{p}D_y[\phi](x)&=J_p( \phi_{13}^+(y)+O(|y|^{4})) +J_p( \phi_{13}^-(y)+O(|y|^{4})).
\end{split}
\end{equation}
Furthermore, since $|\phi_{13}^{\pm}(y)|=O(|y|)$,
\begin{equation}\label{Taylor12}
\begin{split}
&  J_p( \phi_{13}^\pm(y)+O(|y|^{4})) =  J_p( \phi_{13}^\pm(y))+J_p'(O(|y|))O(|y|^{4})= J_p( \phi_{13}^\pm(y))+O(|y|^{p+2}),
\end{split}
\end{equation}Hence, plugging \eqref{Taylor12} into \eqref{Taylor11} yields
\begin{equation}\label{Taylor14}
\begin{split}
|y|^{p}D_y[\phi](x)&= J_p( \phi_{13}^+(y)) + J_p( \phi_{13}^-(y))+O(|y|^{p+2}).
\end{split}
\end{equation}
Let us denote
$$\phi_{12}^{\pm}(y):=\pm\nabla \phi(x)\cdot y+\frac{1}{2}y^{T}D^{2}\phi(x)y.$$
Performing a second-order Taylor expansion we can write
\begin{equation*}
\begin{split}
 J_p( \phi_{13}^{\pm}(y)) &=   J_p( \phi_{12}^{\pm}(y)) +J_p'(\phi_{12}^{\pm}(y))\bigg(\pm\frac{1}{3!}\sum_{|\alpha|=3}D^{\alpha}\phi(x)y^{\alpha}\bigg)+\frac{1}{2}J_p''\bigg( O(|y|)\bigg)O(|y|^{6}) \\ & =J_p( \phi_{12}^{\pm}(y))+  J_p'(\phi_{12}^{\pm}(y))\bigg(\pm\frac{1}{3!}\sum_{|\alpha|=3}D^{\alpha}\phi(x)y^{\alpha}\bigg) +O(|y|^{p+3}).
\end{split}
\end{equation*}
Using this in \eqref{Taylor14} we obtain 
\begin{equation}\label{Taylor17}
\begin{split}
|y|^{p}D_y[\phi](x)=J_p( \phi_{12}^+(y))+J_p(\phi_{12}^-(y))+\frac{1}{3!}\sum_{|\alpha|=3}D^{\alpha}\phi(x)y^{\alpha} \bigg[ J_p'(\phi_{12}^+(y)) - J_p'(\phi_{12}^-(y))\bigg]+O(|y|^{p+2}).
\end{split}
\end{equation}
Next, we do a first-order Taylor expansion for $J_p'$ to get
\begin{equation}\label{Taylor18}
\begin{split}
 J_p'(\phi_{12}^+(y)) - J_p'(\phi_{12}^-(y))=  J_p'(\nabla \phi(x)\cdot y) - J_p'(-\nabla \phi(x)\cdot y) +O(|y|^{p-1}) = O(|y|^{p-1}).
\end{split}
\end{equation}
Moreover, since $p\geq 4$,
\begin{equation}\label{Taylor19}
\begin{split}
 J_p( \phi_{12}^+(y))&+J_p(\phi_{12}^-(y)) = J_p(\nabla\phi(x)\cdot y)+ \frac{1}{2}J'_p(\nabla \phi(x) \cdot y)y^{T}D^{2}\phi(x)y + \frac{1}{8}J_p''(\nabla \phi(x)\cdot y)(y^{T}D^{2}\phi(x)y)^{2} \\ 
 & \quad + J_p(-\nabla\phi(x)\cdot y)+ \frac{1}{2}J'_p(-\nabla \phi \cdot y)y^{T}D^{2}\phi(x)y + \frac{1}{8}J_p''(-\nabla \phi(x)\cdot y)(y^{T}D^{2}\phi(x)y)^{2} + O(|y|^{p+2}) \\ & =J'_p(\nabla \phi \cdot y)y^{T}D^{2}\phi(x)y + O(|y|^{p+2}).
\end{split}
\end{equation}
By \eqref{Taylor17}, \eqref{Taylor18} and \eqref{Taylor19}, we obtain
\begin{equation*}
\begin{split}
|y|^{p}D_y[\phi](x)&=(p-1)|\nabla \phi(x)\cdot y|^{p-2}y^{T}D^{2}\phi(x)y+O(|y|^{p+2}).
\end{split}
\end{equation*}This ends the proof of the Lemma. 
\end{proof}
Notice that in the previous proof the fact that $p\geq 4$ has been crucial to estimate the reminder terms in the Taylor expansions. 
 
\begin{theorem}\label{teo4}
Let $p \geq 4$ and $s\in(0,1)$. Consider a function $\phi\in C^4_b(B_R(x_0))$ for some $x_0\in \R^d$ and $0<R<1$. Then, for $r<R/2$ and $x\in B_{R/2}(x_0)$ we have that
\begin{equation}\label{Id1}
\,P.V.\int_{B_r}J_p(\phi(x+y)-\phi(x)) \frac{\dd y}{|y|^{d+sp}}= \frac{1}{2a_{s,p,d}}\Delta_p\phi(x) r^{p(1-s)} + O(r^{2+p(1-s)}),
\end{equation}
with $a_{s,p,d}$ given in Lemma \ref{lem:niceidentity}, and
\begin{equation}\label{Id2}
\mathcal{M}_{r,i}^p[\phi](x)= \Delta_p\phi(x) + O(r^{2}),\quad i\in\{1,2\},
\end{equation}
where the error terms are uniform in $x$ and $\mathcal{M}_{r,i}^p$ is given in  \eqref{Mlocal}.
\end{theorem}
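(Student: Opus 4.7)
The plan is to carry out the four-step symmetrization strategy sketched at the start of this section. The key observation is that, by the change of variables $y\mapsto -y$ (which preserves the annular set $B_r\setminus B_\varepsilon$ and the sphere $\partial B_r$), every expression appearing in the statement can be rewritten in terms of the symmetric quantity $D_y[\phi](x)$ defined in \eqref{defDy}. The pointwise expansion of Lemma \ref{Approx p greater 4} together with the explicit integral identities of Lemma \ref{lem:niceidentity} then does the rest.

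For \eqref{Id1}, the symmetrization reads
$$P.V.\!\int_{B_r}J_p(\phi(x+y)-\phi(x))\frac{\dd y}{|y|^{d+sp}}=\frac{1}{2}\int_{B_r}D_y[\phi](x)\frac{\dd y}{|y|^{d-(1-s)p}},$$
which is now an absolutely convergent integral (no principal value needed), since Lemma \ref{Approx p greater 4} yields $D_y[\phi](x)=O(1)$ as $y\to 0$. Inserting the pointwise expansion from that lemma and noting that $p+d-(1-s)p=d+sp$, one obtains
$$\frac{1}{2}\int_{B_r}D_y[\phi](x)\frac{\dd y}{|y|^{d-(1-s)p}}=\frac{1}{2}P.V.\!\int_{B_r}\frac{(p-1)|\nabla\phi(x)\cdot y|^{p-2}\,y^{T}D^{2}\phi(x)y}{|y|^{d+sp}}\dd y+O\!\left(\int_{B_r}\!\frac{|y|^{2}}{|y|^{d-(1-s)p}}\dd y\right).$$
The leading term equals $\frac{1}{2a_{s,p,d}}\Delta_p\phi(x)r^{p(1-s)}$ by \eqref{J1}, while the remainder, computed in polar coordinates, is a multiple of $r^{2+p(1-s)}$; this gives \eqref{Id1}.

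For \eqref{Id2}, the same symmetrization applied on the sphere (resp.\ ball) yields
$$\fint_{\partial B_r}J_p(\phi(x+y)-\phi(x))\dd\sigma(y)=\frac{r^{p}}{2}\fint_{\partial B_r}D_y[\phi](x)\dd\sigma(y),\qquad\fint_{B_r}J_p(\phi(x+y)-\phi(x))\dd y=\frac{1}{2}\fint_{B_r}|y|^{p}D_y[\phi](x)\dd y.$$
Applying Lemma \ref{Approx p greater 4} inside these averages produces remainders of size $O(r^{p+2})$ (the factor $|y|^{p}$ is bounded by $r^{p}$), while the main terms are evaluated by \eqref{J2}. Plugging into \eqref{Mlocal} and using that $\kappa_{p,d}=2a_{p,d}$ by construction, the prefactors cancel exactly and one obtains $\mathcal{M}^p_{r,i}[\phi](x)=\Delta_p\phi(x)+O(r^{2})$ for $i=1,2$. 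Uniformity in $x\in B_{R/2}(x_0)$ comes for free, because $B_{R/2}(x)\subset B_R(x_0)$ for every such $x$ and the constant in Lemma \ref{Approx p greater 4} depends only on $\|\phi\|_{C^4_b(B_R(x_0))}$. No serious analytic obstacle is anticipated beyond the careful bookkeeping of the normalization constants $a_{s,p,d}$, $a_{p,d}$ and $\kappa_{p,d}$ to verify that they combine correctly.
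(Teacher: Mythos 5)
Your proof is correct and follows essentially the same route as the paper: symmetrize via $D_y[\phi]$, apply the pointwise expansion of Lemma \ref{Approx p greater 4}, integrate the error, and evaluate the leading term with Lemma \ref{lem:niceidentity}. The only difference is that you spell out in full the bookkeeping for \eqref{Id2} (including the identity $\kappa_{p,d}=2a_{p,d}$), which the paper simply states as "analogously obtained."
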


\begin{proof}
Observe that, by \eqref{defDy} and Lemma \ref{Approx p greater 4},
\[
\begin{split}
P.V.& \int_{B_r}J_p(\phi(x+y)-\phi(x)) \frac{\dd y}{|y|^{d+sp}}=\frac{1}{2}\,P.V. \int_{B_r} D_{y}[\phi](x)\frac{\dd y }{|y|^{d-(1-s)p}}\\
&=\frac{1}{2}(p-1)\int_{B_r}|\nabla \phi(x)\cdot y|^{p-2}y^TD^2\phi(x) y\frac{\dd y }{|y|^{d+sp}}+O\left(\int_{B_r}|y|^{2}\frac{\dd y }{|y|^{d-(1-s)p}}\right)\\
&= \frac{1}{2}(p-1)\int_{B_r}|\nabla \phi(x)\cdot y|^{p-2}y^TD^2\phi(x) y\frac{\dd y }{|y|^{d+sp}}+O(r^{2+(1-s)p}).
\end{split}
\]
Then \eqref{Id1} follows by Lemma \ref{lem:niceidentity}. Identity \eqref{Id2} can be analogously obtained integrating in $B_r$ with respect to the measure $\frac{\dd y}{r^{p}|B_r|}$ or integrating on $\partial B_r$ with respect to $\frac{\dd \sigma(y)}{r^p|\partial B_r|}$.
\end{proof}
 
 \subsection{Case $p\in [3,4)$.} We start by doing a Taylor expansion as in the range $p\geq 4$. However, this argument only works up to $J_p''$, since the power $p-4$ is negative and singular terms may appear. This provides a uniform error, which in this case is not quadratic, as it is shown in the next Lemma. In order to obtain a quadratic error we need to be away of zero gradient points, and the argument becomes more involved (see Lemma \ref{Approx p greater 3} and Theorem \ref{theo_p34}).
% al with them we need a second result that obtains quadratic order in a specific region.
 
 \begin{lemma}\label{p greater 3}
Let $p\in [3,4)$. Consider a function $\phi\in C^4_b(B_R(x))$ for some $x\in \R^d$ and $0<R<1$. Then, for any $y\in B_{\frac{R}{2}}(0)\setminus\{0\}$ we have
\[
|D_y[\phi](x)- \frac{(p-1)}{|y|^p}|\nabla \phi(x)\cdot y|^{p-2}y^TD^2\phi(x) y|\leq C(p, d,\|\phi\|_{C^4_b(B_R(x))})|y|^{p-2}.
\] 
\end{lemma}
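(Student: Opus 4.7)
The strategy is to follow the proof of Lemma \ref{Approx p greater 4}, but adapting it to $p\in[3,4)$, where $J_p''(\xi)=(p-1)(p-2)|\xi|^{p-3}\sgn(\xi)$ is still locally controlled (since $p-3\geq 0$) but $J_p'''$ is no longer integrable near $0$. The target is to establish
\[
|y|^{p}D_y[\phi](x) = (p-1)\,|\nabla\phi(x)\cdot y|^{p-2}\,y^{T}D^{2}\phi(x)\,y + O(|y|^{2p-2}),
\]
which, divided by $|y|^p$, yields the claimed $O(|y|^{p-2})$ bound. In the range $p\in[3,4]$ and $|y|<1$, both $|y|^{p+2}$ and $|y|^{p+3}$ are $\leq |y|^{2p-2}$, so intermediate errors of those sizes will be admissible.

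As in the proof of Lemma \ref{Approx p greater 4}, a fourth-order Taylor expansion gives $\phi(x\pm y)-\phi(x)=\phi_{13}^{\pm}(y)+O(|y|^4)$, and the bound $|J_p'(\xi)|\leq C|\xi|^{p-2}\leq C|y|^{p-2}$ at points of size $O(|y|)$ yields $J_p(\phi_{13}^{\pm}+O(|y|^4))=J_p(\phi_{13}^{\pm})+O(|y|^{p+2})$. Writing $\phi_{13}^{\pm}=\phi_{12}^{\pm}\pm A$ where $A=O(|y|^3)$, a first-order Taylor of $J_p$ around $\phi_{12}^{\pm}$ with Lagrange second-order remainder---now using $|J_p''(\xi)|\leq C|\xi|^{p-3}\leq C|y|^{p-3}$, valid precisely because $p\geq 3$---produces $J_p(\phi_{13}^{\pm})=J_p(\phi_{12}^{\pm})+J_p'(\phi_{12}^{\pm})(\pm A)+O(|y|^{p+3})$. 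Summing in $\pm$, the linear term features $J_p'(\phi_{12}^{+})-J_p'(\phi_{12}^{-})=(p-1)(|a+B|^{p-2}-|a-B|^{p-2})$, with $a=\nabla\phi(x)\cdot y$ and $B=\tfrac12 y^{T}D^{2}\phi(x)y$; since $t\mapsto t^{p-2}$ is Lipschitz on bounded intervals (as $p-2\geq 1$), this is bounded by $C|y|^{p-3}|B|=O(|y|^{p-1})$, and multiplication by $A=O(|y|^3)$ contributes $O(|y|^{p+2})$.

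The main remaining estimate is
\[
J_p(\phi_{12}^{+})+J_p(\phi_{12}^{-})=2(p-1)|a|^{p-2}B+O(|y|^{2p-2}).
\]
Using the integral form of the first-order Taylor formula around $\pm a$ and the identities $J_p(-a)=-J_p(a)$ and $J_p'(-a)=J_p'(a)=(p-1)|a|^{p-2}$, the error term reduces to
\[
(p-1)\,B\int_0^1\!\bigl(|a+tB|^{p-2}+|a-tB|^{p-2}-2|a|^{p-2}\bigr)\dd t.
\]
The hard part is bounding this integrand uniformly in the relative size of $|a|$ and $|B|$: the plan is to split the $t$-interval at $t_0=|a|/(2|B|)$. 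For $|tB|\leq|a|/2$ (always the case if $|B|\leq|a|/2$), a second-order Taylor expansion in $t$---valid since $|a\pm tB|\geq|a|/2>0$---bounds the integrand by $C|a|^{p-4}(tB)^2$; for $|tB|>|a|/2$, both $|a\pm tB|^{p-2}$ and $|a|^{p-2}$ are directly $\leq C|tB|^{p-2}$. Integrating each piece and using $|a|^{p-4}\leq C|B|^{p-4}$ whenever $|a|\geq 2|B|$ (valid because $p-4\leq 0$) gives the uniform bound $C|B|^{p-1}=O(|y|^{2(p-1)})=O(|y|^{2p-2})$. Gathering all the pieces yields the lemma.
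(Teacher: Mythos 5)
Your proof is correct and follows a genuinely different route from the paper. Both proofs open identically: a fourth-order Taylor expansion of $\phi$ followed by absorbing the $O(|y|^4)$ remainder and the third-order contribution at cost $O(|y|^{p+2})$ and $O(|y|^{p+3})$ respectively (admissible since these are $O(|y|^{2p-2})$ for $p\leq 4$). The divergence is in the central symmetric estimate. The paper performs a second-order Taylor expansion of $J_p$ at $\nabla\phi(x)\cdot y$ with a Lagrange remainder $J_p''(\Theta_j^\pm)$, adds and subtracts $J_p''(\pm\nabla\phi(x)\cdot y)$, and then invokes an external H\"older estimate for $J_p''$ (Lemma 2.4 of \cite{BaMe21}, of exponent $p-3$) to bound $J_p''(\Theta_1^+)-J_p''(\nabla\phi(x)\cdot y)=O(|y|^{2(p-3)})$, finishing with the oddness of $J_p''$. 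You instead reduce the core claim to the clean scalar identity
\[
J_p(a+B)-J_p(a-B)-2J_p'(a)B=(p-1)B\int_0^1\bigl(|a+tB|^{p-2}+|a-tB|^{p-2}-2|a|^{p-2}\bigr)\dd t
\]
with $a=\nabla\phi(x)\cdot y$, $B=\tfrac12 y^TD^2\phi(x)y$, and bound it uniformly by $O(|B|^{p-1})$ via the case split at $t_0=|a|/(2|B|)$: a second-order Taylor estimate (using $|a\pm tB|\geq|a|/2$) when $|a|$ dominates, and direct power bounds when $|B|$ dominates. This is more self-contained — it avoids the cited H\"older lemma entirely — and it also handles $p=3$ without any special pleading, whereas the paper's Hölder argument is stated only for $p\in(3,4)$ (the endpoint $p=3$ is harmless but requires a trivial separate remark there). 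The trade-off is that your argument is somewhat longer and relies on a careful dyadic-type split, while the paper's is shorter once the auxiliary H\"older lemma is granted. Both reach the same $O(|y|^{2p-2})$ error, hence $O(|y|^{p-2})$ after dividing by $|y|^p$.
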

\begin{proof}
Let us define
\begin{equation}\label{phi23}
\phi_{23}^{\pm}(y):=\frac{1}{2}y^{T}D^{2}\phi(x)y \pm \frac{1}{3!}\sum_{|\alpha|=3}D^{\alpha}\phi(x)y^{\alpha},
\end{equation}
and, given $\theta_j\in (0,1),$
\begin{equation}\label{thetaj}
\Theta_j^{\pm}(y):=\theta_j(\pm\nabla \phi(x)\cdot y) + (1-\theta_j)\bigg(\pm\nabla \phi(x)\cdot y +\phi_{23}^{\pm}(y)+O(|y|^{4})\bigg).
\end{equation}
We perform a fourth-order Taylor expansion of $\phi$ around $x$ and then a second-order expansion for $J_p$ at $\nabla \phi(x)\cdot y$, to get
\begin{equation*}
\begin{split}
|y|^{p}D_y[\phi](x)=&\,J_p(\nabla \phi(x)\cdot y)+J_p(-\nabla \phi(x)\cdot y)  \\
&+ J_p'(\nabla \phi(x)\cdot y)\cdot ( \phi_{23}^+(y)+O(|y|^{4})) +J_p'(-\nabla \phi(x)\cdot y)\cdot ( \phi_{23}^-(y)+O(|y|^{4}))\\
&+\frac{1}{2}J_p''(\Theta_1^+(y))(\phi_{23}^+(y)+O(|y|^{4}))^{2}+\frac{1}{2}J_p''(\Theta_2^-(y))(\phi_{23}^-(y)+O(|y|^{4}))^{2},
\end{split}
\end{equation*}
for some $\theta_1, \theta_2 \in (0,1)$. Recalling that $J_p$ and $J_p'$ are odd and even functions respectively, we get
\begin{equation}\label{Taylor7}
\begin{split}
|y|^{p}D_y[\phi](x)=&\,(p-1)|\nabla \phi(x)\cdot y|^{p-2}y^{T}D^{2}\phi(x)y + O(|y|^{p+2}) \\ & +\frac{1}{2}J_p''(\Theta_1^+(y))(\phi_{23}^+(y)+O(|y|^{4}))^{2}+\frac{1}{2}J_p''(\Theta_2^-(y))(\phi_{23}^-(y)+O(|y|^{4}))^{2}.
\end{split}
\end{equation}
Adding and substracting in \eqref{Taylor7} the terms
$$\frac{1}{2}J_p''(\pm\nabla \phi(x)\cdot y)\cdot(\phi_{23}^\pm(y)+O(|y|^{4} ))^{2},$$
we get
\begin{equation}\label{Taylor8}
\begin{split}
|y|^{p}D_y[\phi](x)&= (p-1)|\nabla \phi(x)\cdot y|^{p-2}y^{T}D^{2}\phi(x)y + O(|y|^{p+2}) \\
& + \frac{1}{2}\bigg[ J_p''(\Theta_1^+(y))-J_p''(\nabla \phi(x)\cdot y)\bigg](\phi_{23}^+(y)+O(|y|^{4} ))^{2} +\frac{1}{2}J_p''(\nabla \phi(x)\cdot y)(\phi_{23}^+(y)+O(|y|^{4} ))^{2} \\ 
& + \frac{1}{2}\bigg[ J_p''(\Theta_2^-(y))-J_p''(\nabla \phi(x)\cdot y)\bigg](\phi_{23}^-(y)+O(|y|^{4} ))^{2} +\frac{1}{2}J_p''(-\nabla \phi(x)\cdot y)(\phi_{23}^-(y)+O(|y|^{4} ))^{2}.
\end{split}
\end{equation}
By \cite[Lemma 2.4]{BaMe21} applied to $J_p''$ for $p \in (3,4)$,  we get
\begin{equation*}
\begin{split}
&\bigg[ J_p''(\Theta_1^+(y)) -J_p''(\nabla \phi(x)\cdot y)\bigg] (\phi_{23}^+(y)+O(|y|^{4} ))^{2}= O(|y|^{2(p-3)})\cdot O(|y|^{4})=O(|y|^{2p-2}),
\end{split}
\end{equation*}and similarly for the term with $\Theta_2^-$. Finally, recalling that $J_p''$ is odd,  the remianing terms in \eqref{Taylor8} are of order $O(|y|^{p+3})$. Then the result is achieved putting all the estimates together.
\end{proof}
Notice that the order obtained in this Lemma is $p-2$, which is not quadratic when $p\in[3,4)$. Nevertheless, we can refine the argument in a certain region (see the following Lemma) to obtain quadratic error, which is large enough so that the integral in the complementary part preserves the quadratic order (see Theorem \ref{theo_p34}).
 
 \begin{lemma}\label{Approx p greater 3}
Let $p\in [3,4)$. Consider a function $\phi\in C^4_b(B_R(x))$ for some $x\in \R^d$ and $0<R<1$, and let $K_0:=\|D^2\phi\|_{L^\infty(B_R(x))}$. Then, for any $y\in B_{\frac{R}{2}}(0)\setminus\{0\}$ in the region $|\nabla\phi(x)\cdot y|> K_0|y|^2$, we have
\[
|D_y[\phi](x)- \frac{(p-1)}{|y|^p}|\nabla \phi(x)\cdot y|^{p-2}y^TD^2\phi(x) y|\leq C(p,d,\|\phi\|_{C_b^4(B_R(x))})\Phi\left(\nabla \phi(x)\cdot \frac{y}{|y|}\right)|y|^2,
\]
where  $
\Phi(\eta):=1+(p-3)|\eta|^{p-4}$.
 \end{lemma}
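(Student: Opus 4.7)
The plan is to refine the argument in Lemma \ref{p greater 3} by exploiting the fact that, in the region $|\nabla\phi(x)\cdot y|>K_0|y|^2$, the intermediate quantities $\Theta_j^{\pm}(y)$ defined in \eqref{thetaj} stay uniformly bounded away from zero and comparable in size to $\pm\nabla\phi(x)\cdot y$. Indeed, by construction $|\Theta_j^{\pm}(y)\mp\nabla\phi(x)\cdot y|\leq \tfrac{1}{2}K_0|y|^2+O(|y|^3)$, which combined with $|\nabla\phi(x)\cdot y|>K_0|y|^2$ gives $|\Theta_j^{\pm}(y)|\geq \tfrac{1}{2}|\nabla\phi(x)\cdot y|$ for $|y|$ small enough. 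Consequently the whole segment joining $\Theta_j^{\pm}(y)$ and $\pm\nabla\phi(x)\cdot y$ avoids the origin, which is precisely where $J_p''$ loses $C^1$ regularity.

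I would start from identity \eqref{Taylor8} obtained in the proof of Lemma \ref{p greater 3}. The only terms preventing a quadratic bound there are of the form $\tfrac{1}{2}[J_p''(\Theta_j^{\pm})-J_p''(\pm\nabla\phi(x)\cdot y)](\phi_{23}^{\pm}+O(|y|^4))^2$. Instead of invoking the global H\"older bound from \cite[Lemma 2.4]{BaMe21}, I would apply the mean value theorem to $J_p''$ on the segment above. Since $|J_p'''(\xi)|=(p-1)(p-2)(p-3)|\xi|^{p-4}$ and $|\xi|\gtrsim|\nabla\phi(x)\cdot y|$ on that segment (here the sign of $p-4$ turns the lower bound on $|\xi|$ into an upper bound on $|\xi|^{p-4}$), one gets
\[
|J_p''(\Theta_j^{\pm})-J_p''(\pm\nabla\phi(x)\cdot y)|\leq C(p-3)\,|y|^2\,|\nabla\phi(x)\cdot y|^{p-4}.
\]
Multiplying by the $O(|y|^4)$ factor and dividing by $|y|^p$ yields the contribution $C(p-3)|\nabla\phi(x)\cdot y/|y||^{p-4}|y|^2$, which matches the second summand of $\Phi$. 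Note that this cleanly vanishes at $p=3$, consistently with $J_p''$ being locally constant away from $0$ in that case.

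For the residual pieces $\tfrac{1}{2}J_p''(\pm\nabla\phi(x)\cdot y)(\phi_{23}^{\pm}+O(|y|^4))^2$, I would use the odd symmetry of $J_p''$ to combine them into
\[
\tfrac{1}{2}J_p''(\nabla\phi(x)\cdot y)\bigl[(\phi_{23}^+)^2-(\phi_{23}^-)^2\bigr] + J_p''(\nabla\phi(x)\cdot y)\,O(|y|^6).
\]
Factoring the difference of squares as $(\phi_{23}^++\phi_{23}^-)(\phi_{23}^+-\phi_{23}^-)=O(|y|^5)$, and using that $J_p''(\nabla\phi(x)\cdot y)=O(|\nabla\phi(x)\cdot y|^{p-3})$, the whole block is bounded by $C|\nabla\phi(x)\cdot y|^{p-3}|y|^5$; dividing by $|y|^p$ and pulling out $|y|^{p-3}$ gives $C|\nabla\phi(x)\cdot y/|y||^{p-3}|y|^2$, which is absorbed by the ``$1$'' in $\Phi$ (since $|\nabla\phi(x)\cdot y/|y||\leq\|\nabla\phi\|_\infty$). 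The $O(|y|^{p+2})$ remainder from \eqref{Taylor8}, after dividing by $|y|^p$, likewise contributes an $O(|y|^2)$ term and closes the estimate.

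The main delicate point is justifying the MVT step: one must rule out that the segment between $\Theta_j^{\pm}(y)$ and $\pm\nabla\phi(x)\cdot y$ crosses the singularity of $J_p''$, and then translate ``$|\xi|$ bounded below'' into the usable bound ``$|\xi|^{p-4}\leq C|\nabla\phi(x)\cdot y|^{p-4}$''. The threshold $K_0|y|^2$ in the hypothesis is tailored precisely to dominate the $\tfrac12 K_0|y|^2$-sized deviation between $\Theta_j^{\pm}$ and $\pm\nabla\phi(x)\cdot y$, so this is exactly the sharp condition under which the argument goes through.
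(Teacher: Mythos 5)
Your proof is correct, and it takes a genuinely different route from the paper's. You continue from the intermediate identity \eqref{Taylor8} obtained in Lemma \ref{p greater 3} (a direct second-order Taylor expansion of $J_p$ around $\pm\nabla\phi(x)\cdot y$) and upgrade the crude H\"older bound on $J_p''(\Theta_j^\pm)-J_p''(\pm\nabla\phi(x)\cdot y)$ to a mean-value estimate, using the geometric hypothesis $|\nabla\phi(x)\cdot y|>K_0|y|^2$ to show the segment joining $\Theta_j^\pm(y)$ and $\pm\nabla\phi(x)\cdot y$ stays at distance $\geq\tfrac12|\nabla\phi(x)\cdot y|$ from the singularity of $J_p''$. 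The paper instead restarts from scratch: it factors $J_p(f+g)=J_p(f)\,J_p(1+g/f)$ with $f=\nabla\phi(x)\cdot y$ and $g=\phi_{24}^\pm$, and Taylor-expands $J_p$ around $1$, where the same hypothesis guarantees $|g/f|<\tfrac12$ so the Lagrange point $\eta$ lies in $B_{1/2}(1)$ and $J_p'''(\eta)$ is bounded. The two arguments exploit the threshold condition in essentially dual ways (segment in the argument of $J_p''$ vs.\ ratio $g/f$ close to $1$), and they place the $(p-3)$-factor and the $|\nabla\phi\cdot y/|y||^{p-4}$-weight in the same spot of the final estimate; your route has the advantage of reusing the machinery already built in Lemma \ref{p greater 3}, while the paper's multiplicative factorization is perhaps more systematic and reappears verbatim in the $p\in(2,3)$ case (Lemma \ref{approx p 2 3}) and in dimension one (Lemma \ref{1d}). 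One small point: instead of $|\Theta_j^\pm(y)\mp\nabla\phi(x)\cdot y|\leq\tfrac12K_0|y|^2+O(|y|^3)$ (which then needs the ``$|y|$ small enough'' caveat), you should invoke the exact second-order Lagrange bound $|\phi_{24}^\pm(y)|\leq\tfrac12K_0|y|^2$ — as the paper does via the mean value theorem applied to $\phi(x\pm y)-\phi(x)$ — so that $|\Theta_j^\pm(y)\mp\nabla\phi(x)\cdot y|\leq(1-\theta_j)\tfrac12K_0|y|^2<\tfrac12|\nabla\phi(x)\cdot y|$ holds for \emph{every} $y$ in the admissible region, matching the uniformity claimed in the statement.
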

 
  \begin{proof}
 Let $f,g:\R^d\to\R$ be smooth functions such that $f(y)\not=0$ for $y\not=0$ and $|g(y)/f(y)|<\frac 12$. Then
\begin{equation}\label{eq:estMaria}
 \begin{split}
 J_p&(f(y)+g(y))=J_p(f(y))J_p\left(1+\frac{g(y)}{f(y)}\right)\\
 &=J_p(f(y))\left(J_p(1) +  J_p'(1) \frac{g(y)}{f(y)}+ \frac{J_p''(1)}{2} \left(\frac{g(y)}{f(y)}\right)^2+ \frac{J_p'''(\eta)}{3!} \left(\frac{g(y)}{f(y)}\right)^3\right)\\
 &=J_p(f(y))+(p-1)|f(y)|^{p-2}g(y)+\frac{(p-1)(p-2)}{2}|f(y)|^{p-2}\frac{g(y)^2}{f(y)}+\frac{J_p'''(\eta)}{3!} |f(y)|^{p-2} \frac{g(y)^3}{f(y)^2}
 \end{split}
\end{equation}
 where $\eta \in B_{\frac{g(y)}{f(y)}}(1)\subset B_{\frac{1}{2}}(1)$, that is, $1/2\leq |\eta|\leq 1$. Now we consider
 \begin{equation} \label{fgdef}
 \phi_1(y):=\nabla \phi(x)\cdot y \quad \textup{and} \quad \phi_{24}^{\pm}(y):=\frac{1}{2}y^{T}D^2\phi(x) y \pm \frac{1}{3!} \sum_{|\alpha|=3}D^\alpha\phi(x) y^{\alpha} + O(|y|^4).
 \end{equation}
 By a direct application of the Mean Value Theorem on $\phi(x\pm y)-\phi(x)$ we get the bound
 $$|\phi_{24}^{\pm}(y)|\leq \frac 12\|D^2\phi\|_{L^\infty (B_R(x))}|y|^2,$$
and hence, if $y$ belongs to the region $|\nabla\phi(x)\cdot y|> K_0|y|^2$ then
 $$\bigg|\dfrac{\phi_{24}^{\pm}(y)}{\phi_1(y)}\bigg|<\frac{1}{2}.$$
We use now the definition of $D_y[\phi]$ together with \eqref{eq:estMaria} to obtain
 \[
 \begin{split}
 |y|^{p}&D_y[\phi](x)=J_p(\phi_1(y)+\phi_{24}^+(y))+J_p(-\phi_1(y)+\phi_{24}^{-}(y))\\
 =&\underbrace{J_p(\phi_1(y))+J_p(-\phi_1(y))}_{I_1}+\underbrace{(p-1)|\phi_1(y)|^{p-2}(\phi_{24}^{+}(y)+\phi_{24}^{-}(y))}_{I_2}\\
 &+\underbrace{\frac{(p-1)(p-2)}{2}|\phi_1(y)|^{p-2}\left(\frac{\phi_{24}^+(y)^2}{\phi_1(y)}-\frac{\phi_{24}^-(y)^2}{\phi_1(y)}\right)}_{I_3}+\underbrace{ \frac{1}{3!}|\phi_1(y)|^{p-2}\left(J_p'''(\eta_1) \frac{\phi_{24}^+(y)^3}{\phi_1(y)^2}+J_p'''(\eta_2) \frac{\phi_{24}^-(y)^3}{\phi_1(y)^2}\right)}_{I_4}.
 \end{split}
 \]
By the antisymmetry of $J_p$ we have $I_1=0$. On the other hand,
\begin{equation*}
\begin{split}
|I_2- (p-1)|\nabla \phi(x)\cdot y|^{p-2}y^TD^2\phi(x) y|&\leq C|\nabla \phi(x) \cdot y|^{p-2}|y|^4\leq C(p,\|\phi\|_{C_b^4(B_R(x))})|y|^{p+2}.
\end{split}
\end{equation*}
Now we estimate $I_3$. First, we note that
\[
\begin{split}
|\phi_{24}^+(y)^2-\phi_{24}^{-}(y)^2|&= |(\phi_{24}^+(y)+\phi_{24}^{-}(y))(\phi_{24}^+(y)-\phi_{24}^{-}(y))|
\leq (|y^TD^2\phi(x)y|+O(|y|^4)) (O(|y|^3)+O(|y|^4))\\
&\leq C|y|^5,
\end{split}
\]
where we have used that $|y|\leq1/2$. Then
\begin{equation*}\label{eq:seconderrorterm}
\begin{split}
|I_3|\leq& \frac{(p-1)(p-2)}{2} |\nabla \phi(x)\cdot \frac{y}{|y|}|^{p-3} |y|^{p-3}|\phi_{24}^+(y)^2-\phi_{24}^{-}(y)^2|
\leq  C(p,\|\phi\|_{C_b^4(B_R(x))}) |y|^{p+2}.
\end{split}
\end{equation*}
Finally, 
\[
\begin{split}
|I_4|\leq&\,C(p-3)|y|^{p-4} |\nabla \phi(x)\cdot \frac{y}{|y|}|^{p-4} 
||y|^2+|y|^3 
+ |y|^4|^3\leq C(p-3)|\nabla \phi(x)\cdot \frac{y}{|y|}|^{p-4} |y|^{p+2}.
\end{split}
\]\end{proof} 
Combing the results in Lemma \ref{p greater 3} and Lemma \ref{Approx p greater 3} we can obtain quadratic order in the error away from the zero gradient points.
  
  \begin{theorem}\label{theo_p34}
Let $p \in [3,4)$ and $s\in(0,1)$. Consider a function $\phi\in C^4_b(B_R(x_0))$ for some $x_0\in \R^d$ and $0<R<1$, and such that $\nabla\phi\not=0$ in $\overline{ B_R(x_0)}$. Let $R_0$ defined in \eqref{def:R0}. Then, for $r<\min\{R_0,R/2\}$ and $x\in B_{R/2}(x_0)$ we have that
\begin{equation}\label{Id134}
\,P.V.\int_{B_r}J_p(\phi(x+y)-\phi(x)) \frac{\dd y}{|y|^{d+sp}}= \frac{1}{2a_{s,p,d}}\Delta_p\phi(x) r^{p(1-s)} + O(r^{2+p(1-s)})
\end{equation}
with $a_{s,p,d}$ given in Lemma \ref{lem:niceidentity} and
\begin{equation}\label{Id234}
\mathcal{M}_{r,i}^p[\phi](x)= \Delta_p\phi(x) + O(r^{2}),\quad i\in\{1,2\},
\end{equation}
where the error terms are uniform in $x$ and $\mathcal{M}_{r,i}^p$ is given in \eqref{Mlocal}.
\end{theorem}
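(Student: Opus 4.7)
The plan is to execute the general Steps 1--4 outlined at the start of Section \ref{sec:asymptoticexporder}, combining the two pointwise bounds just proved via a dichotomy that is ``quadratic'' where the gradient dominates and ``cruder'' on a thin complementary set. Precisely, set $K_0 := \|D^2\phi\|_{L^\infty(B_R(x_0))}$ and, for each $x \in B_{R/2}(x_0)$, partition
\begin{equation*}
B_r = A(x) \cup B(x), \qquad A(x) := \{y \in B_r : |\nabla\phi(x)\cdot y| > K_0|y|^2\}, \qquad B(x):=B_r\setminus A(x).
\end{equation*}
The assumption $r < R_0$ guarantees that the hypotheses of Lemma \ref{Approx p greater 3} hold on $A(x)$; on $B(x)$ only the uniform Lemma \ref{p greater 3} is available.

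For \eqref{Id134}, symmetry of $B_r$ gives
\begin{equation*}
P.V. \int_{B_r} J_p(\phi(x+y) - \phi(x)) \frac{\dd y}{|y|^{d+sp}} = \frac{1}{2}\, P.V. \int_{B_r} D_y[\phi](x) \frac{\dd y}{|y|^{d - p(1-s)}}.
\end{equation*}
Inserting the approximation of $D_y[\phi](x)$ by $(p-1)|y|^{-p}|\nabla\phi(x)\cdot y|^{p-2}y^T D^2\phi(x) y$ and applying \eqref{J1} of Lemma \ref{lem:niceidentity} produces the main term $\Delta_p\phi(x) r^{p(1-s)}/(2a_{s,p,d})$. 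It remains to bound the integrated pointwise error. On $A(x)$ the factor $\Phi(\nabla\phi(x)\cdot y/|y|) = 1 + (p-3)|\nabla\phi(x)\cdot y/|y||^{p-4}$ supplied by Lemma \ref{Approx p greater 3} is integrable on the sphere (the exponent $p-4 > -1$ for $p \in (3,4)$, while for $p = 3$ the prefactor $p-3$ kills the singular term); combining this with Lemma \ref{lem:intestim} yields a contribution of order $r^{2 + p(1-s)}$. On $B(x)$ the bound of Lemma \ref{p greater 3} is only $C|y|^{p-2}$, but in polar coordinates the angular measure of $B(x)$ at radius $\rho$ is $O(K_0\rho/|\nabla\phi(x)|)$, providing an extra factor $\rho$ that gives $O(r^{p-1+p(1-s)})$; this is absorbed into $O(r^{2+p(1-s)})$ because $p \geq 3$.

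The proof of \eqref{Id234} proceeds analogously. Symmetry yields
\begin{equation*}
\mathcal{M}^p_{r,2}[\phi](x) = \frac{(p+d)\kappa_{p,d}}{2d\, r^p|B_r|} \int_{B_r} |y|^p D_y[\phi](x) \dd y,
\end{equation*}
and the same manipulation on $\partial B_r$ handles $\mathcal{M}^p_{r,1}$. Substituting the pointwise approximation and applying \eqref{J2} produces $(\kappa_{p,d}/(2a_{p,d}))\Delta_p\phi(x)$, which equals $\Delta_p\phi(x)$ because $\kappa_{p,d} = 2a_{p,d}$ by the very definition. Splitting the residual as before leaves an $O(r^2)$ contribution from $A(x)$ and an $O(r^{p-1})$ contribution from $B(x)$; the second is absorbed by the first since $p \geq 3$. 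Uniformity in $x \in B_{R/2}(x_0)$ is inherited from the uniform positivity of $|\nabla\phi|$ and the uniform upper bound $K_0$ on $\overline{B_R(x_0)}$, both of which are in force by hypothesis.

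The main obstacle is precisely the region--$B$ estimate: one must show that the small angular measure of the slab around the hyperplane $\nabla\phi(x)^\perp$ compensates for the loss of two powers of $|y|$ in the pointwise error. The exponent $p - 1$ that arises from this trade--off is exactly borderline at $p = 3$, which is why the argument works for the whole range $p \in [3,4)$ but degrades once one tries to push below $p = 3$; treating that regime is precisely the aim of the next subsection.
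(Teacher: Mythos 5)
Your proposal is correct and takes essentially the same route as the paper's proof: you partition $B_r$ by the same dichotomy $|\nabla\phi(x)\cdot y| \gtrless K_0|y|^2$, use Lemma \ref{Approx p greater 3} together with Lemma \ref{lem:intestim} on the good region and Lemma \ref{p greater 3} combined with the polar-coordinate angular-measure estimate $O(K_0\rho/|\nabla\phi(x)|)$ on the thin complementary slab, and conclude via Lemma \ref{lem:niceidentity}, exactly as in the text. The only additions are cosmetic (naming the regions $A(x),B(x)$, explicitly noting that the prefactor $p-3$ removes the singular term of $\Phi$ at $p=3$, and spelling out $\kappa_{p,d}=2a_{p,d}$), all of which are consistent with the paper.
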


\begin{proof}
Let $K_0:= \|D^2\phi\|_{L^\infty(B_R(x))}$. Consider the region
$$\Omega:=\left\{y\in\R^d:\,|\nabla\phi(x)\cdot y|>K_0|y|^2\right\},$$
and define
\begin{equation}\begin{split}\label{I1I2}
I_1:=&\,\,P.V.\int_{B_r\cap \Omega}J_p(\phi(x+y)-\phi(x)) \frac{\dd y}{|y|^{d+sp}}=\frac{1}{2}\,P.V.\int_{B_r\cap \Omega}D_y[\phi](x) \frac{\dd y}{|y|^{d-p(1-s)}},\\
I_2:=&\,\,P.V.\int_{B_r\cap (\R^d\setminus\Omega)}J_p(\phi(x+y)-\phi(x)) \frac{\dd y}{|y|^{d+sp}}=\frac{1}{2}\,P.V.\int_{B_r\cap(\R^d\setminus\Omega)}D_y[\phi](x) \frac{\dd y}{|y|^{d-p(1-s)}}.
\end{split}\end{equation}
By Lemma \ref{Approx p greater 3} and Lemma \ref{lem:intestim} we have that
\begin{equation}
\begin{split}\label{I1_est}
I_1=&\,\frac{1}{2}(p-1)\int_{B_r \cap \Omega} 
|\nabla \phi (x)\cdot y|^{p-2}y^{T}D^{2}\phi(x)y \,\frac{\dd y }{|y|^{d+sp}}  \\
&+r^{2}O\left(\int_{B_r \cap \Omega}  \left(|\nabla \phi(x)\cdot \frac{y}{|y|}|^{p-2}+|\nabla \phi(x)\cdot \frac{y}{|y|}|^{p-3}+(p-3)|\nabla \phi(x)\cdot \frac{y}{|y|}|^{p-4}\right)\frac{\dd y }{|y|^{d-(1-s)p}}  \right)
\\= & \,\frac{1}{2}(p-1)\int_{B_r \cap \Omega} 
|\nabla \phi (x)\cdot y|^{p-2}y^{T}D^{2}\phi(x)y \,\frac{\dd y }{|y|^{d+sp}}  +O(r^{2+(1-s)p}).
\end{split}
\end{equation}
By Lemma \ref{p greater 3}, we can estimate $I_2$ as
\begin{equation}\begin{split}\label{I2_est}
I_2=&\,\frac{1}{2}(p-1)\int_{B_r \cap (\R^d\setminus\Omega)} 
|\nabla \phi (x)\cdot y|^{p-2}y^{T}D^{2}\phi(x)y \,\frac{\dd y }{|y|^{d+sp}}+O\left( r^{p-2}\int_{B_r \cap (\R^d\setminus\Omega)} \frac{\dd y }{|y|^{d-(1-s)p}} \right).
\end{split}\end{equation}
Without loss of generality we can assume $\nabla \phi(x)=|\nabla\phi(x)|e_1$. Thus, asking $y\in \R^d\setminus\Omega$ is equivalent to require $|y_1|\leq\frac{K_0}{|\nabla\phi(x)|}|y|^2$, where $y_1$ denotes the first coordinate of $y$. Using standard polar coordinates, this can be rewritten as $|\cos\theta|<\frac{K}{|\nabla\phi(x)|}r$ and therefore
\begin{equation*}\begin{split}
\int_{B_r \cap (\R^d\setminus\Omega)}  \frac{\dd y }{|y|^{d-(1-s)p}}&=\pi^{d-2}\int_0^r\int_{\arccos(\frac{K}{|\nabla\phi(x)|}\rho)}^{\pi-\arccos(\frac{K}{|\nabla\phi(x)|}\rho)}\frac{\dd \theta \dd\rho }{\rho^{1-(1-s)p}}\\
&=\pi^{d-2}\int_0^r\left(\pi-2\arccos\left(\frac{K_0}{|\nabla\phi(x)|}\rho\right)\right)\frac{ \dd\rho }{\rho^{1-(1-s)p}}.
\end{split}\end{equation*} 
Choosing $r<R_0$ it can be seen that 
$$\bigg|\pi-2\arccos\left(\frac{K_0}{|\nabla\phi(x)|}\rho\right)\bigg|\leq 4 \frac{K_0}{|\nabla\phi(x)|}\rho,$$
and hence 
$$\int_{B_r \cap (\R^d\setminus\Omega)}  \frac{\dd y }{|y|^{d-(1-s)p}}=O\left(\int_0^r\rho^{(1-s)p}\dd\rho\right)=O(r^{1+(1-s)p}).$$
Replacing in \eqref{I2_est} we get
$$I_2=\,\frac{1}{2}(p-1)\int_{B_r \cap (\R^d\setminus\Omega)} 
|\nabla \phi (x)\cdot y|^{p-2}y^{T}D^{2}\phi(x)y \,\frac{\dd y }{|y|^{d+sp}}+O(r^{p-1+(1-s)p}).$$
Using this estimate, \eqref{I1_est} and Lemma \ref{lem:niceidentity} we obtain \eqref{Id134}. 

Proceeding analogously we obtain \eqref{Id234}.
\end{proof}
 
 \subsection{Case $p\in (2,3)$.} As in the previous case, we can obtain a uniform error of order $p-2$ regardless of the value of the gradient. When the gradient is not zero, we consider two regions, as in the case $p\in(3,4)$, but this time the errors obtained are less than quadratic. This is due to two facts: $J_p'''$ is singular around zero; and the complementary region is not small enough.
 
  \begin{lemma}\label{Approx p greater 2}
Let $p\in (2,3)$. Consider a function $\phi\in C^4_b(B_R(x))$ for some $x\in \R^d$ and $0<R<1$. Then, for any $y\in B_{\frac{R}{2}}(0)\setminus\{0\}$ we have
\[
|D_y[\phi](x)- \frac{(p-1)}{|y|^p}|\nabla \phi(x)\cdot y|^{p-2}y^TD^2\phi(x) y|\leq C(p,d,\|\phi\|_{C^4_b(B_R(x))})|y|^{p-2}.
\]
 \end{lemma}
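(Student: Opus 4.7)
The plan is to adapt the strategy of Lemma \ref{p greater 3} to the case $p\in(2,3)$. The crucial difference is that now $J_p'(\xi)=(p-1)|\xi|^{p-2}$ is only H\"older continuous of order $p-2\in(0,1)$ and $J_p''$ becomes singular at the origin, so the second-order Taylor expansion of $J_p$ used for $p\in[3,4)$ is no longer available. The remedy is to work with the first-order integral remainder of $J_p$, and to exploit a symmetric/antisymmetric decomposition so that the H\"older modulus of $J_p'$ matches the $|y|^{2}$ size of the higher-order Taylor remainder.

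First, I would perform a fourth-order Taylor expansion of $\phi$ about $x$, writing
\[
\phi(x\pm y)-\phi(x)=\pm\phi_1(y)+\phi_{24}^{\pm}(y),\quad \phi_1(y):=\nabla\phi(x)\cdot y,
\]
with $|\phi_{24}^{\pm}(y)|\leq C\|\phi\|_{C^4_b(B_R(x))}|y|^2$. Splitting into symmetric and antisymmetric parts,
\[
\phi_{24}^{s}:=\tfrac{1}{2}(\phi_{24}^{+}+\phi_{24}^{-})=\tfrac{1}{2}y^{T}D^{2}\phi(x)y+O(|y|^{4}),\quad \phi_{24}^{a}:=\tfrac{1}{2}(\phi_{24}^{+}-\phi_{24}^{-})=O(|y|^{3}),
\]
and setting $A(y):=\phi_1(y)+\phi_{24}^{a}(y)$, one rewrites
\[
|y|^{p}D_y[\phi](x)=J_p(A+\phi_{24}^{s})+J_p(-A+\phi_{24}^{s}).
\]

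Next, I would apply to each summand the first-order Taylor expansion of $J_p$ with integral remainder and exploit that $J_p$ is odd while $J_p'$ is even. After adding and subtracting $J_p'(\pm A)$ in the remainder integrands, this reduces to
\[
|y|^{p}D_y[\phi](x)=2J_p'(A)\phi_{24}^{s}+R_{+}(y)+R_{-}(y),
\]
with $R_{\pm}(y):=\int_0^1\bigl[J_p'(\pm A+t\phi_{24}^{s})-J_p'(\pm A)\bigr]\phi_{24}^{s}\,dt$. The main obstacle appears here: since $|\cdot|^{p-2}$ is only H\"older continuous of order $p-2$ on the whole line for $p\in(2,3)$, the sharpest available bound is
\[
|J_p'(\pm A+t\phi_{24}^{s})-J_p'(\pm A)|\leq (p-1)|\phi_{24}^{s}|^{p-2}\leq C|y|^{2(p-2)},
\]
which together with $|\phi_{24}^{s}|\leq C|y|^{2}$ yields $|R_{\pm}(y)|\leq C|y|^{2p-2}$.

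Finally, I would replace $A$ by $\phi_1$ in the leading term via H\"older continuity once more,
\[
\bigl||A|^{p-2}-|\phi_1|^{p-2}\bigr|\leq |\phi_{24}^{a}|^{p-2}\leq C|y|^{3(p-2)},
\]
obtaining $2J_p'(A)\phi_{24}^{s}=(p-1)|\phi_1|^{p-2}y^{T}D^{2}\phi(x)y+O(|y|^{3p-4})+O(|y|^{p+2})$, where the last term comes from the $O(|y|^{4})$ part of $\phi_{24}^{s}$ combined with $|A|^{p-2}\leq C|y|^{p-2}$. Summing all error contributions and dividing by $|y|^{p}$ produces an overall error of order $\max\{|y|^{p-2},|y|^{2p-4},|y|^{2}\}=|y|^{p-2}$ for $p\in(2,3)$, which is precisely the claimed bound. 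The heart of the argument is thus the interplay between the H\"older modulus of $J_p'$ and the $|y|^{2}$ size of $\phi_{24}^{s}$; the symmetric/antisymmetric decomposition is the crucial ingredient making these two scales compatible.
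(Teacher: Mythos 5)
Your proof is correct and uses the same core strategy as the paper: a first-order Taylor expansion of $J_p$ around the linear term combined with the oddness of $J_p$ to produce the leading term, and the $C^{0,p-2}$ Hölder continuity of $J_p'$ to bound the remainder by $O(|y|^{2(p-1)})$. Your symmetric/antisymmetric decomposition of $\phi_{24}^\pm$ and the use of the integral form of the remainder are cosmetic reorganizations of the paper's presentation (which instead uses the Lagrange form, keeps $\phi_1=\nabla\phi(x)\cdot y$ as the sole expansion point, and bounds the bracket terms $J_p'(\Theta_j^\pm)-J_p'(\pm\phi_1)$ directly); both arrive at the same $O(|y|^{p-2})$ bound.
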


\begin{proof}
We perform a fourth-order Taylor expansion of $\phi$ and then a first-order expansion of $J_p$ at $x$ and $\nabla \phi(x)\cdot y$, respectively, to obtain 
\begin{equation}\label{Taylor1}
\begin{split}
|y|^{p}&D_y[\phi](x)=\,J_p(\phi(x+y)-\phi(x))+J_p(\phi(x-y)-\phi(x))\\ 
 = &\,J_p(\nabla \phi(x)\cdot y)+ J_p'(\Theta_1^+(y))( \phi_{23}^+(y)+O(|y|^{4} ))+J_p(-\nabla \phi(x)\cdot y)+ J_p'(\Theta_2^-(y))( \phi_{23}^-(y)+O(|y|^{4} )),
\end{split}
\end{equation}where $\Theta_j^{\pm}$ and $\phi_{23}^\pm$ were defined in \eqref{thetaj} and \eqref{phi23} respectively.. Since $J_p$ is odd, we cancel out the terms $J_p(\nabla \phi(x)\cdot y)$ and $J_p(-\nabla \phi(x)\cdot y)$. In \eqref{Taylor1} we add and substract the terms:
$$ J_p'(\pm\nabla \phi(x)\cdot y)(\phi_{23}^\pm(y)+O(|y|^{4} )),$$
to get
\begin{equation*}
\begin{split}
 |y|^{p}&D_y[\phi](x) =\, \dfrac{(p-1)}{|y|^{p}}|\nabla \phi(x)\cdot y|^{p-2}y^{T}D^{2}\phi(x)y + O(|y|^{p+2})  \\ 
& + \bigg[ J_p'(\Theta_1^+(y))- J_p'(\nabla \phi(x)\cdot y)\bigg]( \phi_{23}^+(y)+O(|y|^{4}))+ \bigg[ J_p'(\Theta_2^-(y))- J_p'(\nabla \phi(x)\cdot y)\bigg]( \phi_{23}^-(y)+O(|y|^{4})). 
\end{split}
\end{equation*}
From a slight modification of \cite[Lemma 2.4]{BaMe21}, we obtain for $p \in (2,3)$ that there is a constant $C_p >0$ such that the terms in brackets are of order $O(|y|^{2(p-1)})$, and the result follows.
\end{proof}

 \begin{lemma}\label{approx p 2 3}
Let $p\in (2,3)$ and $\beta\in(0,p-2)$. Consider a function $\phi\in C^4_b(B_R(x))$ for some $x\in \R^d$ and $0<R<1$, and let $K_0:=\|D^2\phi\|_{L^\infty(B_R(x))}$. Then, for any $y\in B_{\frac{R}{2}}(0)\setminus\{0\}$ in the region $|\nabla\phi(x)\cdot y|> K_0|y|^2$, we have
\[
|D_y[\phi](x)- \frac{(p-1)}{|y|^p}|\nabla \phi(x)\cdot y|^{p-2}y^TD^2\phi(x) y|\leq C(p,d,\|\phi\|_{C_b^4(B_R(x))})\Phi\left(\nabla \phi(x)\cdot \frac{y}{|y|}\right)|y|^{1+\beta},
\]
where $
\Phi(\eta):=1+|\eta|^{p-3}+|\eta|^{p-3-\beta}$.
 \end{lemma}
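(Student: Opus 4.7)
The plan is to mimic the proof of Lemma \ref{Approx p greater 3}, carrying out the same third-order expansion of $J_p$ from \eqref{eq:estMaria} applied to $\phi(x\pm y)-\phi(x)$, but then to handle the extra singularity $|\eta|^{p-4}$ that appears in the third-order Taylor remainder—a singularity that, for $p\in(2,3)$, is too severe to absorb into a simple $\eta$-weight—by trading it for a weaker power of $|y|$, using the geometric constraint imposed by the region $|\nabla\phi(x)\cdot y|>K_0|y|^2$.

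More concretely, keeping the notation $\phi_1$ and $\phi_{24}^{\pm}$ from \eqref{fgdef}, the hypothesis on $y$ ensures that $|\phi_{24}^{\pm}(y)/\phi_1(y)|<1/2$, so \eqref{eq:estMaria} applies (the remainder argument $\eta\in B_{1/2}(1)$ stays bounded away from $0$, where $J_p$ is smooth even for $p\in(2,3)$). This yields a decomposition
\begin{equation*}
|y|^p D_y[\phi](x)=I_1+I_2+I_3+I_4,
\end{equation*}
exactly as in Lemma \ref{Approx p greater 3}, with $I_1=0$ by antisymmetry, and $I_2,I_3,I_4$ the first, second and third-order contributions of the expansion of $J_p(1+\phi_{24}^{\pm}/\phi_1)$.

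Writing $\eta:=\nabla\phi(x)\cdot y/|y|$, the $I_2$ term produces, up to an error $O(|\eta|^{p-2}|y|^{p+2})=O(|y|^{p+2})$ (bounded uniformly since $p-2>0$ and $|\eta|\leq\|\nabla\phi\|_{L^\infty}$), the claimed leading term $(p-1)|\nabla\phi(x)\cdot y|^{p-2}y^T D^2\phi(x)y$. For $I_3$, the factorisation $(\phi_{24}^+)^2-(\phi_{24}^-)^2=(\phi_{24}^++\phi_{24}^-)(\phi_{24}^+-\phi_{24}^-)=O(|y|^2)\cdot O(|y|^3)=O(|y|^5)$ together with $|\phi_1|^{p-3}=|\eta|^{p-3}|y|^{p-3}$ yields $|I_3|\leq C|\eta|^{p-3}|y|^{p+2}$; the factor $|\eta|^{p-3}$ can no longer be absorbed into a constant because $p<3$, and it accounts for the middle term of $\Phi$.

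The main obstacle is the third-order remainder $I_4$. Using boundedness of $J_p'''$ on $B_{1/2}(1)$ together with $|\phi_{24}^{\pm}|^3=O(|y|^6)$ and $|\phi_1|^{p-4}=|\eta|^{p-4}|y|^{p-4}$ only gives the naive bound $|I_4|\leq C|\eta|^{p-4}|y|^{p+2}$, and for $p\in(2,3)$ the exponent $p-4\in(-2,-1)$ is too singular to appear as such in $\Phi$. To remedy this we exploit the region constraint $|y|\leq|\eta|/K_0$: since $\beta-1<0$, we have $|\eta|^{\beta-1}\leq K_0^{1-\beta}|y|^{\beta-1}$, and hence
\begin{equation*}
|\eta|^{p-4}=|\eta|^{p-3-\beta}\,|\eta|^{\beta-1}\leq K_0^{1-\beta}\,|\eta|^{p-3-\beta}\,|y|^{\beta-1},
\end{equation*}
which upgrades the bound on $I_4$ to $C|\eta|^{p-3-\beta}|y|^{p+1+\beta}$, producing the last term of $\Phi$. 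Combining the three estimates, absorbing $|y|^{p+2}$ into $|y|^{p+1+\beta}$ (valid since $\beta<1$ and $|y|<R/2<1$), and dividing by $|y|^p$ gives the stated inequality.
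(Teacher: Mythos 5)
Your proof is correct, and it takes a genuinely different route than the paper's for the key third-order piece, while sharing the same structural decomposition $|y|^p D_y[\phi](x)=I_1+I_2+I_3+I_4$ from Lemma \ref{Approx p greater 3}.

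The paper avoids the third derivative of $J_p$ altogether: it stops the Taylor expansion of $J_p(1+z)$ at order two and controls the remainder by the $C^\beta$-H\"older seminorm of $J_p''$ on the annulus near $1$, yielding directly a remainder $|f|^{p-1}|g/f|^{2+\beta}=|\phi_1|^{p-3-\beta}O(|y|^{4+2\beta})=|\eta|^{p-3-\beta}O(|y|^{p+1+\beta})$, with no further use of the region constraint on this piece. You instead apply the full third-order expansion \eqref{eq:estMaria} (legitimately, since $J_p'''$ is bounded on $B_{1/2}(1)$ even for $p\in(2,3)$), obtain the more singular $|I_4|\leq C|\eta|^{p-4}|y|^{p+2}$, and then upgrade it by exploiting the geometric restriction $|\eta|>K_0|y|$ to trade $|\eta|^{\beta-1}$ for $|y|^{\beta-1}$, arriving at the same bound $|\eta|^{p-3-\beta}|y|^{p+1+\beta}$. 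The two remainder strategies are genuinely distinct — one is analytic (fractional Taylor remainder), the other geometric (trading $\eta$-powers for $y$-powers via the region) — and both cleanly explain why the exponent $p-3-\beta$, rather than the naive $p-4$, appears in $\Phi$ and why the restriction $\beta<p-2$ arises. Your treatment of $I_2$ and $I_3$ matches the paper's.

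One small slip: from $|\eta|>K_0|y|$ with $\beta-1<0$ one gets $|\eta|^{\beta-1}\leq K_0^{\beta-1}|y|^{\beta-1}$, not $K_0^{1-\beta}|y|^{\beta-1}$ as you wrote (the latter fails if $K_0<1$). Since the constant in the statement is allowed to depend on $\|\phi\|_{C^4_b}$, and hence on $K_0$, this does not affect the result, but the exponent should be fixed. One may also want to separately dispose of the degenerate case $K_0=0$ (then $\phi_{24}^\pm\equiv0$ and the remainder vanishes), since $K_0^{\beta-1}$ is formally undefined there.
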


  \begin{proof}
 Given $f,g:\R^d\to\R$  smooth functions such that $f(y)\not=0$ for $y\not=0$ and $|g(y)/f(y)|<1$, we have
  \begin{equation*}
 \begin{split}
 J_p&(f(y)+g(y))=J_p(f(y))J_p(1+\frac{g(y)}{f(y)})\\
 &=J_p(f(y))\left(J_p(1) +  J_p'(1) \frac{g(y)}{f(y)}+ \frac{J_p''(1)}{2} \left(\frac{g(y)}{f(y)}\right)^2+ O\left([J_p'']_{C^\beta(B_1\setminus B_{1/2})}\left(\bigg|\frac{g(y)}{f(y)}\bigg|\right)^{2+\beta}\right) \right).
 \end{split}
\end{equation*}
Considering $f:=\phi_1$ and $g^\pm:=\phi_{23}^{\pm}$ as in \eqref{fgdef} the proof follows analogously to Lemma \ref{Approx p greater 3}. Noticing that, since $\beta<p-2$, we have that
  \[
  p-3>p-3-\beta>-1
  \]
we can use Lemma \ref{lem:intestim} to conclude that
\[
\begin{split}
|\phi_1(y)|^{p-3-\beta}O\left(|y|^{4+2\beta}\right)&=|\nabla \phi(x) \cdot \frac{y}{|y|}|^{p-3-\beta}|y|^{p-3-\beta} O(|y|^{4+2\beta})=|\nabla \phi(x) \cdot \frac{y}{|y|}|^{p-3-\beta} O(|y|^{p+1+\beta}). \qedhere
\end{split}
\]
  \end{proof}

 \begin{theorem}\label{teo_p23}
Let $p \in (2,3)$, $s\in(0,1)$ and $\beta\in(0,p-2)$. Consider a function $\phi\in C^4_b(B_R(x_0))$ for some $x_0\in \R^d$ and $0<R<1$, and such that $\nabla\phi\not=0$ in $\overline{ B_R(x_0)}$.  Let $R_0$ defined in \eqref{def:R0}. Then, for $r<\min\{R_0,R/2\}$ and $x\in B_{R/2}(x_0)$ we have that
\begin{equation}\label{Id123}
\,P.V.\int_{B_r}J_p(\phi(x+y)-\phi(x)) \frac{\dd y}{|y|^{d+sp}}= \frac{1}{2a_{s,p,d}}\Delta_p\phi(x) r^{p(1-s)} + O(r^{1+\beta+p(1-s)})
\end{equation}
with $a_{s,p,d}$ given in Lemma \ref{lem:niceidentity}, and
\begin{equation}\label{Id223}
\mathcal{M}_{r,i}^p[\phi](x)= \Delta_p\phi(x) + O(r^{1+\beta}),\quad i\in\{1,2\},
\end{equation}
where the error terms are uniform in $x$ and $\mathcal{M}_{r,i}^p$ is given by \eqref{Mlocal}.
\end{theorem}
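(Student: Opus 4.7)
The plan is to mirror the splitting strategy of Theorem \ref{theo_p34}, using Lemma \ref{approx p 2 3} on the non-degenerate region where the gradient dominates, and falling back on the weaker uniform estimate of Lemma \ref{Approx p greater 2} on its complement. With $K_0 := \|D^2\phi\|_{L^\infty(B_R(x_0))}$, I would introduce
\[
\Omega := \{ y \in \R^d : |\nabla\phi(x)\cdot y| > K_0 |y|^2 \}
\]
and split the principal value integral as $I_1 + I_2$, in analogy with \eqref{I1I2}.

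On $B_r\cap \Omega$, Lemma \ref{approx p 2 3} replaces $D_y[\phi](x)$ by the main $p$-Laplacian integrand plus an error controlled by $\Phi(\nabla\phi(x)\cdot y/|y|)\,|y|^{1+\beta}$ with $\Phi(\eta) = 1 + |\eta|^{p-3} + |\eta|^{p-3-\beta}$. The hypothesis $\beta\in(0,p-2)$ is exactly what makes the most singular exponent satisfy $p-3-\beta>-1$, so that the angular integrals produced by Lemma \ref{lem:intestim} are finite for every term appearing in $\Phi$. Polar integration then gives
\[
\int_{B_r\cap\Omega}\Phi(\nabla\phi(x)\cdot y/|y|)\,|y|^{1+\beta}\,\frac{\dd y}{|y|^{d-p(1-s)}} = O(r^{1+\beta+p(1-s)}).
\]
On $B_r\setminus\Omega$, only the uniform bound $|y|^{p-2}$ from Lemma \ref{Approx p greater 2} is available. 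Assuming without loss of generality $\nabla\phi(x)=|\nabla\phi(x)|e_1$, the same angular width computation as in the proof of Theorem \ref{theo_p34} (requiring $r<R_0$) shows that the complement of $\Omega$ intersects $S^{d-1}$ in an arc of length $O(r)$, which yields
\[
\int_{B_r\setminus\Omega}\frac{\dd y}{|y|^{d-p(1-s)}} = O(r^{1+p(1-s)}),
\]
and combined with $|y|^{p-2}\leq r^{p-2}$ produces a contribution of order $r^{p-1+p(1-s)}$. Since $\beta<p-2$, one has $p-1>1+\beta$ and this degenerate term is absorbed into the main error.

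Adding $I_1+I_2$, the principal parts reassemble into
\[
\frac{p-1}{2}\,P.V.\int_{B_r}|\nabla\phi(x)\cdot y|^{p-2}y^TD^2\phi(x)y\,\frac{\dd y}{|y|^{d+sp}},
\]
which Lemma \ref{lem:niceidentity} identifies with $\frac{1}{2a_{s,p,d}}\Delta_p\phi(x)\,r^{p(1-s)}$, proving \eqref{Id123}. The identity \eqref{Id223} follows from the same decomposition applied against the local averaging measures $\frac{\dd y}{r^p|B_r|}$ or $\frac{\dd\sigma(y)}{r^p|\partial B_r|}$, combined with the second line of Lemma \ref{lem:niceidentity}. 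Uniformity in $x\in B_{R/2}(x_0)$ is automatic because the constants depend only on $\|\phi\|_{C^4_b(B_R(x_0))}$ and on the uniform lower bound of $|\nabla\phi|$ built into $R_0$ in \eqref{def:R0}. The only delicate step is the bookkeeping of the singular angular weights in $\Phi$: the constraint $\beta<p-2$ is sharp, being simultaneously what keeps the angular integrals finite and what forces the degenerate-region error to be lower order than the main one.
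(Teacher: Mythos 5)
Your proposal is correct and mirrors the paper's argument exactly: the paper's own proof of Theorem \ref{teo_p23} is a one-paragraph reference to the decomposition \eqref{I1I2} of Theorem \ref{theo_p34}, invoking Lemma \ref{Approx p greater 2} on $B_r\setminus\Omega$ and Lemma \ref{approx p 2 3} on $B_r\cap\Omega$, and concluding via Lemma \ref{lem:niceidentity}. You have simply spelled out the bookkeeping the paper leaves implicit, including the observation that $\beta<p-2$ is precisely the threshold making both the angular integrals over $\Phi$ finite (via Lemma \ref{lem:intestim}, since $p-3-\beta>-1$) and the degenerate-region contribution $O(r^{p-1+p(1-s)})$ subordinate to the leading error $O(r^{1+\beta+p(1-s)})$.
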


\begin{proof}
We follow the same strategy as in the proof of Theorem \ref{theo_p34}. In fact, defining $I_1$ and $I_2$ as in \eqref{I1I2}, and using Lemma \ref{Approx p greater 2} and Lemma \ref{approx p 2 3} one can see that 
$$I_1=\frac{1}{2}(p-1)\int_{B_r \cap \Omega} 
|\nabla \phi (x)\cdot y|^{p-2}y^{T}D^{2}\phi(x)y \,\frac{\dd y }{|y|^{d+sp}}  +O(r^{1+\beta+(1-s)p}),$$
with $\beta\in (0,p-2)$ and
$$I_2=\frac{1}{2}(p-1)\int_{B_r \cap (\R^d\setminus\Omega)} 
|\nabla \phi (x)\cdot y|^{p-2}y^{T}D^{2}\phi(x)y \,\frac{\dd y }{|y|^{d+sp}}  +O(r^{p-1+(1-s)p}).$$
Identity \eqref{Id123} follows as a consequence of Lemma \ref{lem:niceidentity}. An analogous argument proves \eqref{Id223}.
\end{proof}

\subsection{Case $p\in(1,2)$.}
This case is more involved since we are not allowed to do a general Taylor expansion (we cannot differentiate $J_p$ even once) as in the previous cases. Nevertheless, we can give a precise estimate far from the singular region.
 \begin{lemma}\label{approx p 1 2}
Let $p\in (1,2)$, $\beta\in (0,p-1)$, and fix $\eta>0$. Consider a function $\phi\in C^4_b(B_R(x))$ for some $x\in \R^d$ and $0<R<1$. Then, for any $y\in B_{\frac{R}{2}}(0)\setminus\{0\}$ such that $|\nabla\phi(x)\cdot y|\geq \eta$, we have
\[
|D_y[\phi](x)- \frac{(p-1)}{|y|^p}|\nabla \phi(x)\cdot y|^{p-2}y^TD^2\phi(x) y|\leq C(p,d,\|\phi\|_{C^4_b(B_R(x))})\Phi\left(x,y,\phi\right)|y|^{\beta}
\]
where 
\begin{equation}\label{constant12}
\Phi(x,y,\phi)=|\nabla \phi(x) \cdot \frac{y}{|y|}+|y|\frac{y^T}{|y|}D^2\phi(x)  \frac{y}{|y|}|^{p-2-\beta}+|\nabla \phi(x) \cdot \frac{y}{|y|}|^{p-2-\beta} .
\end{equation} 
 \end{lemma}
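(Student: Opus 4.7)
The plan is to work in the spirit of Lemma~\ref{approx p 2 3} but avoiding any differentiation of $J_p$ beyond first order, since for $p\in(1,2)$ the derivative $J_p'(\xi)=(p-1)|\xi|^{p-2}$ is already singular at the origin. The hypothesis $|\nabla\phi(x)\cdot y|\geq\eta$ will play the role of keeping the relevant arguments of $J_p$ uniformly bounded away from zero.

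First, I would Taylor-expand $\phi$ to fourth order and write $A:=\phi(x+y)-\phi(x)=\phi_1(y)+\phi_2(y)+\phi_3(y)+O(|y|^4)$ and $B:=\phi(x-y)-\phi(x)=-\phi_1(y)+\phi_2(y)-\phi_3(y)+O(|y|^4)$, with $\phi_1(y)=\nabla\phi(x)\cdot y$, $\phi_2(y)=\tfrac{1}{2} y^T D^2\phi(x)y$ and $\phi_3$ collecting the cubic terms. Introducing $\lambda:=(A-B)/2=\phi_1+O(|y|^3)$ and $\mu:=(A+B)/2=\phi_2+O(|y|^4)$, we have $A=\lambda+\mu$, $B=-\lambda+\mu$. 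The hypothesis $|\phi_1|\geq\eta$ together with $|\mu|\lesssim\|D^2\phi\|_{L^\infty}|y|^2$ ensures that for admissible $y$ the interval $[-B,A]=[\lambda-\mu,\lambda+\mu]$ stays bounded away from zero.

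Second, using antisymmetry of $J_p$ and the fundamental theorem of calculus I obtain the exact identity
\[
J_p(A)+J_p(B)=J_p(A)-J_p(-B)=(p-1)\int_{-B}^{A}|\xi|^{p-2}\,d\xi=(p-1)\mu\int_{-1}^{1}|\lambda+t\mu|^{p-2}\,dt,
\]
which is to be compared with the target $(p-1)|\phi_1|^{p-2}y^T D^2\phi(x)y=2(p-1)|\phi_1|^{p-2}\mu+O(|y|^{p+2})$. Up to the cheap replacement of $|\phi_1|^{p-2}$ by $|\lambda|^{p-2}$ (negligible since $|\lambda-\phi_1|=O(|y|^3)$ and $|\phi_1|$ is bounded below by $\eta$), the task reduces to estimating the main error $(p-1)\mu\int_{-1}^{1}\bigl(|\lambda+t\mu|^{p-2}-|\lambda|^{p-2}\bigr)\,dt$. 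For this I would invoke the Hölder-type inequality for singular powers: for $\beta\in(0,p-1)$ and real $\xi,h$ with $0\notin[\xi,\xi+h]$,
\[
\bigl||\xi+h|^{p-2}-|\xi|^{p-2}\bigr|\leq C_{p,\beta}\bigl(|\xi|^{p-2-\beta}+|\xi+h|^{p-2-\beta}\bigr)|h|^{\beta}.
\]
Applying this with $\xi=\lambda$, $h=t\mu$, and integrating over $t\in[-1,1]$, the negativity of $p-2-\beta$ places the maximum of $|\lambda+t\mu|^{p-2-\beta}$ at the endpoints $t=\pm 1$, giving a bound of order $|\mu|^{1+\beta}(|\lambda|^{p-2-\beta}+|\lambda\pm\mu|^{p-2-\beta})$. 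Dividing by $|y|^p$, and using $|\mu|\lesssim|y|^2$, $|\lambda|\asymp|y|\,|\nabla\phi(x)\cdot\hat y|$ and $|\lambda\pm\mu|/|y|\asymp|\nabla\phi(x)\cdot\hat y+|y|\hat y^T D^2\phi(x)\hat y|$ (up to constants absorbed thanks to the fact that the base is bounded away from zero by the companion term appearing in $\Phi$), yields exactly the claimed bound with $\Phi$ as in \eqref{constant12}.

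The main obstacle is the delicate interplay between the restriction $\beta<p-1$ and the translation of the natural endpoint values $|\lambda\pm\mu|$ into the perturbed-gradient expression inside $\Phi$: the restriction on $\beta$ is precisely what is needed for the singular factor $|\xi|^{p-2-\beta}$ to be controllable, after integration against $|h|^\beta$, by the lower bound $|\nabla\phi(x)\cdot y|\geq\eta$; and the algebraic identification of $|\lambda\pm\mu|$ with $|\nabla\phi(x)\cdot\hat y+|y|\hat y^T D^2\phi(x)\hat y|$ demands careful tracking of factors of $2$ and of lower-order remainders, but no essentially new idea beyond those already present in the proofs for $p\in(2,3)$ and $p\in[3,4)$.
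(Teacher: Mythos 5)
Your argument is correct, and reaches the same estimate as the paper by a route that is close in spirit but cleaner in its bookkeeping. The paper separates the error into two pieces: first it discards the $O(|y|^3)$ remainder of the Taylor expansion of $\phi$ using the $p<2$ increment bound of \cite[Lemma~3.4]{KoKuLi19} (yielding the first term of $\Phi$), and then it performs a first-order Taylor expansion of $J_p$ at $\nabla\phi(x)\cdot y$ with a $\beta$-H\"older-type remainder (yielding the second term of $\Phi$), finally invoking oddness of $J_p$ and evenness of $J_p'$ to combine the $\pm y$ contributions. You instead symmetrize from the outset via $\lambda=(A-B)/2$ and $\mu=(A+B)/2$ and use the exact identity $J_p(A)+J_p(B)=(p-1)\mu\int_{-1}^{1}|\lambda+t\mu|^{p-2}\,dt$, after which a single H\"older-type bound on $|\cdot|^{p-2}$ produces both terms of $\Phi$ at once. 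The underlying ingredient is the same — the $\beta$-H\"older continuity of $\xi\mapsto|\xi|^{p-2}$ away from the origin with $\beta<p-1$, which is exactly the threshold for the angular integral $\fint_{B_r}|\nabla\phi(x)\cdot\hat y|^{p-2-\beta}\dd y$ of Lemma~\ref{lem:intestim} to converge — but your fundamental-theorem-of-calculus representation avoids an explicit appeal to \cite{KoKuLi19} and makes the cancellation coming from $D_y[\phi]$ transparent rather than post hoc. Two small points to tidy if you were to write it out in full: the FTC step requires $|\mu|<|\lambda|$ (so that $0\notin[\lambda-\mu,\lambda+\mu]$), which follows from $|\nabla\phi(x)\cdot y|\geq\eta$ only for $|y|$ small relative to $\eta$ and $\|D^2\phi\|_\infty$ — the same hidden restriction that the paper's Taylor expansion of $J_p$ implicitly needs, and which is harmless in the application in Theorem~\ref{teo_p12} where $r$ shrinks; and the identification of the endpoint value $|\lambda\pm\mu|/|y|$ with the first term of $\Phi$ up to harmless constants and signs, which you rightly flag.
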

For the proof we use some ideas from \cite{dTLi21}.
\begin{proof}
Expanding $\phi(x+y)$ it easily follows that
$$J_p(\phi(x+y)-\phi(x))=J_p(\nabla\phi(x)\cdot y+\frac{1}{2}y^TD^2\phi(x)y+O(|y|^3)),$$
and using \cite[Lemma 3.4]{KoKuLi19},
\begin{equation}\begin{split}\label{estOrder2}
|J_p(\nabla\phi(x)\cdot y+\frac{1}{2}y^TD^2\phi(x)y&+O(|y|^3))-J_p(\nabla\phi(x)\cdot y+\frac{1}{2}y^TD^2\phi(x)y)|\\
&\leq C\left(|\nabla\phi(x)\cdot y+y^TD^2\phi(x)y|+|O(|y|^3)|\right)^{p-2}O(|y|^{3})\\
&\leq C\left(|\nabla\phi(x)\cdot \frac{y}{|y|}+|y|\frac{y^T}{|y|}D^2\phi(x)\frac{y}{|y|}|\right)^{p-2}O(|y|^{p+1}).
\end{split}\end{equation}
Thus we will focus on estimating the term $J_p(\nabla\phi(x)\cdot y+\frac{1}{2}y^TD^2\phi(x)y)$. Since $|\nabla\phi(x)\cdot y|\geq \eta$,
we can do a Taylor expansion of $J_p$ to get, for every $\beta\in (0,p-1)$,
\begin{equation*}\begin{split}
|J_p(\nabla\phi(x)\cdot y+&\frac{1}{2}y^TD^2\phi(x)y)-J_p(\nabla\phi(x)\cdot y)-J'_p(\nabla\phi(x)\cdot y)y^TD^2\phi(x)y|\\
&\leq C\left(|\nabla\phi(x)\cdot y+y^TD^2\phi(x)y|^{p-2-\beta}+|\nabla\phi(x)\cdot y|^{p-2-\beta}\right)|y^TD^2\phi(x)y|^{1+\beta}\\
&\leq C(p,d,\|\phi\|_{C^4_b(B_R(x))})\Phi\left(x,y,\phi\right)|y|^{p+\beta},
\end{split}\end{equation*}
with $\Phi$ given in \eqref{constant12}. Proceeding analogously for the term arising from $J_p(\phi(x-y)-\phi(x))$ we can conclude that
\begin{equation*}\begin{split}
|y|^p D_y[\phi](x)&=J_p'(\nabla\phi(x)\cdot y)y^TD^2\phi(x)y+\Phi(x,y,\phi)O(|y|^{\beta})+O(|y|^{3-p}),
\end{split}\end{equation*}
and the result follows.
\end{proof}

 \begin{theorem}\label{teo_p12}
Let $p \in (1,2)$, $s\in(0,1)$ and $\beta\in(0,p-1)$. Consider a function $\phi\in C^4_b(B_R(x_0))$ for some $x_0\in \R^d$ and $0<R<1$, and such that $\nabla\phi\not=0$ in $\overline{ B_R(x_0)}$. Then, for $r<R/2$ and $x\in B_{R/2}(x_0)$ we have that
\begin{equation}\label{Id112}
\,P.V.\int_{B_r}J_p(\phi(x+y)-\phi(x)) \frac{\dd y}{|y|^{d+sp}}= \frac{1}{2a_{s,p,d}}\Delta_p\phi(x) r^{p(1-s)} + O(r^{\beta+p(1-s)}),
\end{equation}
with $a_{s,p,d}$ given in Lemma \ref{lem:niceidentity}, and
\begin{equation}\label{Id212}
\mathcal{M}_{r,i}^p[\phi](x)= \Delta_p\phi(x) + O(r^{\beta}),\quad i\in\{1,2\},
\end{equation}
where the error terms are uniform in $x$ and $\mathcal{M}_{r,i}^p$ is given by \eqref{Mlocal}.
\end{theorem}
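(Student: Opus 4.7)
The overall strategy mirrors Theorem \ref{theo_p34} and Theorem \ref{teo_p23}: split $B_r$ into a region where Lemma \ref{approx p 1 2} applies and a small complementary region where it does not, replace $D_y[\phi]$ in the good region by its leading-order expression, and close the argument with Lemma \ref{lem:niceidentity}. The essential new feature in the singular range $p\in(1,2)$ is that $J_p$ is not even $C^1$ at the origin, so no uniform analog of Lemma \ref{Approx p greater 2} is available and the control on $B_r\setminus \Omega$ cannot come from a Taylor-type bound on $D_y[\phi]$; it must be obtained directly from the a priori bound $|J_p(\phi(x+y)-\phi(x))|\leq C|y|^{p-1}$ together with the fact that the ``bad'' set is a thin slab around the hyperplane $\nabla\phi(x)\cdot y=0$.

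First, given $\beta\in(0,p-1)$, I would introduce a parameter $\eta=\eta(r)>0$ (to be tuned later as a small power of $r$) and split
\[
\Omega:=\{y\in B_r:\,|\nabla\phi(x)\cdot y|\geq \eta\},\qquad B_r\setminus\Omega,
\]
and decompose the principal value integral accordingly, using $D_y[\phi]$ as in \eqref{I1I2}. On $\Omega$, Lemma \ref{approx p 1 2} yields
\[
D_y[\phi](x)=\frac{p-1}{|y|^{p}}|\nabla\phi(x)\cdot y|^{p-2}y^T D^2\phi(x)y+O\bigl(\Phi(x,y,\phi)|y|^{\beta}\bigr),
\]
and integrating the error against $|y|^{-d+(1-s)p}\dd y$, Lemma \ref{lem:intestim} (applied to each of the two pieces defining $\Phi$, which have integrable angular singularity since $p-2-\beta>-1$) gives a contribution of order $r^{\beta+p(1-s)}$. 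Next, to complete the leading integral to the full ball, I would bound the missing piece
\[
\int_{B_r\setminus\Omega}|\nabla\phi(x)\cdot y|^{p-2}|y^T D^2\phi(x)y|\frac{\dd y}{|y|^{d+sp}}
\]
by passing to polar coordinates and exploiting the fact that the angular aperture of the slab is controlled by $\eta/|y|$; this produces a small error that also scales like a power of $r$ depending on $\eta$. Lemma \ref{lem:niceidentity} then converts the full-ball leading integral into $\frac{1}{2a_{s,p,d}}\Delta_p\phi(x)r^{p(1-s)}$.

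For the complementary slab $B_r\setminus\Omega$ itself, I would not try to expand $J_p$ at all: using only $|\phi(x+y)-\phi(x)|\leq \|\nabla\phi\|_{L^\infty(B_R(x))}|y|$ I get $|J_p(\phi(x+y)-\phi(x))|\leq C|y|^{p-1}$, so
\[
\Bigl|P.V.\int_{B_r\setminus\Omega}J_p(\phi(x+y)-\phi(x))\frac{\dd y}{|y|^{d+sp}}\Bigr|\leq C\int_{B_r\setminus\Omega}\frac{\dd y}{|y|^{d+sp-(p-1)}},
\]
and a direct computation in polar coordinates, using that $B_r\setminus\Omega$ is contained in a conical slab of opening $\lesssim \eta/(|\nabla\phi(x)||y|)$, gives a bound that again depends on $\eta$ and $r$.

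The main obstacle is then a balancing problem: picking $\eta=\eta(r)=r^{q}$ with $q$ chosen so that the errors from $\Omega$ and from $B_r\setminus\Omega$ are both $O(r^{\beta+p(1-s)})$. The exponent $q$ will be constrained from two sides, and one must check that the range $\beta\in(0,p-1)$ is exactly what allows a valid choice. Once this is handled, the estimate \eqref{Id112} for the fractional piece follows, and \eqref{Id212} for $\mathcal{M}^p_{r,i}$ is obtained by the same splitting with the measures $\dd y/(r^p|B_r|)$ and $\dd\sigma(y)/(r^p|\partial B_r|)$, invoking the corresponding identities in \eqref{J2} of Lemma \ref{lem:niceidentity}.
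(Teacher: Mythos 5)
The high-level split into a good region where Lemma~\ref{approx p 1 2} applies and a bad slab around the hyperplane $\nabla\phi(x)\cdot y=0$ is the right idea, and your treatment of the good region (integrate the error $\Phi(x,y,\phi)|y|^\beta$ against $|y|^{-d+(1-s)p}\,\dd y$, control the angular singularity since $p-2-\beta>-1$, feed the leading term into Lemma~\ref{lem:niceidentity}) does match the paper. The problem is the bad slab.

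Your bound
\[
\Bigl|P.V.\int_{B_r\setminus\Omega}J_p(\phi(x+y)-\phi(x))\frac{\dd y}{|y|^{d+sp}}\Bigr|\leq C\int_{B_r\setminus\Omega}\frac{\dd y}{|y|^{d+sp-(p-1)}}
\]
does not hold, because the right-hand side may be infinite. The slab $B_r\setminus\Omega$ contains the full ball $B_{\eta/|\nabla\phi(x)|}$, and in polar coordinates the radial integrand is $\rho^{p(1-s)-2}$, which is not integrable at $0$ whenever $p(1-s)\leq 1$; this regime is certainly reached for $p\in(1,2)$ and $s\in(0,1)$ (take $p=3/2$, $s=1/2$). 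The principal-value integral on the slab is finite only because of the $y\leftrightarrow -y$ cancellation in $D_y[\phi]$, and taking absolute values of the pointwise integrand before integrating destroys exactly that cancellation. So no choice of $\eta=r^q$ can rescue the step as written: the object you are trying to bound is not actually bounded by your estimate.

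The paper's proof retains the cancellation. On the slab it works with $D_y[\phi]$, subtracts $J_p(\nabla\phi(x)\cdot y)$ (which integrates to zero by oddness of $J_p$), and then uses the H\"older-type estimate \cite[Lemma 3.4]{KoKuLi19} on the difference $J_p(\nabla\phi(x)\cdot y + y^TD^2\phi(x)y) - J_p(\nabla\phi(x)\cdot y)$ to get an integrable majorant of order $|\nabla\phi(x)\cdot\tfrac{y}{|y|}+\cdots|^{p-2}|y|^{p}$. That majorant \emph{is} integrable near $\rho=0$ regardless of the sign of $p(1-s)-1$, and since the error term from the good region is $O(r^{\beta+p(1-s)})$ with a constant independent of $\eta$, the paper simply fixes $r$, sends $\eta\to 0$ by dominated convergence (the bad contribution is $o_\eta(1)$, and $B_r\cap(\R^d\setminus\Omega)\uparrow B_r$ so the leading integral fills out), and only then applies Lemma~\ref{lem:niceidentity}. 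This also sidesteps the balancing problem you flagged: there is no need to tie $\eta$ to $r$ at all. If you want to repair your plan, the essential missing idea is to perform the odd/even decomposition on the slab before estimating, exactly as in the paper's $I_{11}$, $I_{12}$; the crude pointwise bound $|J_p(\phi(x+y)-\phi(x))|\leq C|y|^{p-1}$ cannot be the starting point.
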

 
 \begin{proof}
Fix $\eta>0$ and consider $\Omega:=\{y\in\R^d:\,|\nabla\phi(x)\cdot y|\leq \eta\}$. Define
\begin{equation}\begin{split}\label{I1I2p12}
I_1:=&\,\,P.V.\int_{B_r\cap \Omega}J_p(\phi(x+y)-\phi(x)) \frac{\dd y}{|y|^{d+sp}},\quad
I_2:=\,\,P.V.\int_{B_r\cap(\R^d\setminus\Omega)}J_p(\phi(x+y)-\phi(x)) \frac{\dd y}{|y|^{d+sp}}.
\end{split}\end{equation}
Using \eqref{estOrder2}, the symmetry of the domain and the oddness of $J_p$ we can write
\begin{equation*}\begin{split}
I_1=&\,\,\underbrace{P.V\int_{B_r\cap \Omega}(J_p(\nabla\phi(x)\cdot y+y^TD^2\phi(x)y)-J_p(\nabla\phi(x)\cdot y))\frac{\dd y}{|y|^{d+sp}}}_{I_{11}}\\
&\,+\underbrace{O\left(\int_{B_r\cap \Omega}\left(|\nabla\phi(x)\cdot \frac{y}{|y|}+|y|\frac{y^T}{|y|}D^2\phi(x)\frac{y}{|y|}|\right)^{p-2}\frac{|y|^{p+1}\dd y}{|y|^{d+sp}}\right) }_{I_{12}}.
\end{split}\end{equation*}
By a straightforward generalization of \cite[Lemma A3]{dTLi21} for the singular measure (in the spirit of Lemma \ref{lem:intestim}), and the Dominated Convergence Theorem, it can be proven that $I_{12}=o_\eta(1)$. Likewise, by \cite[Lemma 3.4]{KoKuLi19}, it follows that
$$|I_{11}|\leq C\int_{B_r\cap \Omega}\left(|\nabla\phi(x)\cdot \frac{y}{|y|}+|y|\frac{y^T}{|y|}D^2\phi(x)\frac{y}{|y|}|\right)^{p-2}\frac{|y|^{p}\dd y}{|y|^{d+sp}}=o_\eta(1).$$
By Lemma \ref{approx p 1 2},
\begin{equation*}\begin{split}
I_2=&\,\frac{1}{2}(p-1)\int_{B_r \cap (\R^d\setminus\Omega)} 
|\nabla \phi (x)\cdot y|^{p-2}y^{T}D^{2}\phi(x)y \,\frac{\dd y }{|y|^{d+sp}}  \\
&+O\left(\int_{B_r \cap (\R^d\setminus\Omega)} 
\Phi(x,y,\phi)\,\frac{|y|^{p+\beta}\dd y }{|y|^{d+sp}} \right),
\end{split}\end{equation*}
with $\Phi$ given in \eqref{constant12}, and $\beta\in (0,p-1)$. Notice that we can bound the last term by the integral in $B_r$ and, using once again \cite[Lemma A3]{dTLi21}  on the last integral, we conclude that
$$\bigg|I_2-\frac{1}{2}(p-1)\int_{B_r \cap (\R^d\setminus\Omega)} 
|\nabla \phi (x)\cdot y|^{p-2}y^{T}D^{2}\phi(x)y \,\frac{\dd y }{|y|^{d+sp}}\bigg|\leq C|y|^{\beta+(1-s)p},$$
where the last constant is independent of $\eta$. Therefore
\begin{equation*}\begin{split}
\,P.V\int_{B_r}J_p(\phi(x+y)-\phi(x)) \frac{\dd y}{|y|^{d+sp}}=&\,\frac{1}{2}(p-1)\int_{B_r \cap (\R^d\setminus\Omega)} 
|\nabla \phi (x)\cdot y|^{p-2}y^{T}D^{2}\phi(x)y \,\frac{\dd y }{|y|^{d+sp}}\\
&\,+O(|y|^{\beta+p})+o_\eta(1).
\end{split}\end{equation*}
Letting $\eta\to 0$ and applying Lemma \ref{lem:niceidentity} we obtain \eqref{Id112}. Identity \eqref{Id212} can be analogously obtained.
 \end{proof}
  
 \subsection{Results in dimension $d=1$.}
 We first note that, in this case, 
 \[
 \Delta_p\phi(x)=(p-1) |\phi_x(x)|^{p-2} \phi_{xx}(x)
 \]
 and for $r>0$, $D_r[\phi]$ defined by \eqref{defDy} is precisely the asymptotic expansion for the $p$-Laplacian given by \eqref{eq:asexp1D}. We have the following result.
  \begin{lemma}\label{1d}
Let $p> 1$. Consider a function $\phi\in C^4_b(B_R(x))$ for some $x\in \R^d$ and $0<R<1$ and such that $\phi_x\not=0$ in $\overline{B_R(x)}$ and let
\[
r<R_0:=\frac{\inf_{B_R(x)}{|\phi_x|}}{\|\phi_{xx}\|_{L^\infty(B_R(x))}}.
\]
Then, for any $r<\min\{R_0,R/2\}$ we have
\[
|D_r[\phi](x)- \Delta_p \phi(x)|\leq C(p,d,\|\phi\|_{C^4_b(B_R(x))}) (\inf_{B_{R}(x)}| \phi_x|^{p-4}+1)r^2.
\]
 \end{lemma}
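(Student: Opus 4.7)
The strategy is to adapt the argument of Lemma \ref{Approx p greater 3} to dimension one, where it is considerably simpler: since $y$ can only vary along $\pm r$ and $\phi_x$ is assumed not to vanish on $\overline{B_R(x)}$, the critical quantity $\nabla\phi(x)\cdot y = \pm\phi_x(x) r$ is uniformly bounded away from zero. First I would perform a fourth-order Taylor expansion of $\phi$ around $x$ and split
\[
\phi(x\pm r) - \phi(x) = f_\pm + g_\pm, \qquad f_\pm := \pm \phi_x(x)\, r, \qquad g_\pm := \tfrac{1}{2}\phi_{xx}(x) r^2 \pm \tfrac{1}{6}\phi_{xxx}(x) r^3 + O(r^4).
\]
Next I would verify, using the definition of $R_0$ and the condition $r<R_0$, that $|g_\pm/f_\pm|<1/2$ for $r$ small enough, so that the expansion of $J_p$ around a nonzero base point (identity \eqref{eq:estMaria} applied with $f=f_\pm$, $g=g_\pm$) is available.

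Applying that identity gives
\[
J_p(f_\pm+g_\pm) = J_p(f_\pm) + (p-1)|f_\pm|^{p-2} g_\pm + \tfrac{(p-1)(p-2)}{2}|f_\pm|^{p-2}\tfrac{g_\pm^2}{f_\pm} + \tfrac{J_p'''(\eta_\pm)}{6}|f_\pm|^{p-2}\tfrac{g_\pm^3}{f_\pm^2},
\]
with $\eta_\pm$ lying in a fixed neighborhood of $1$ that is bounded away from zero. Summing the $+$ and $-$ contributions and exploiting that $f_- = -f_+$, the leading terms $J_p(f_\pm)$ cancel by oddness of $J_p$; the linear-in-$g$ terms combine using $g_++g_- = \phi_{xx}(x) r^2 + O(r^4)$ to produce exactly $\Delta_p\phi(x)\, r^p + O(|\phi_x(x)|^{p-2} r^{p+2})$. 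For the quadratic-in-$g$ term, the key cancellation comes from $g_+^2/f_+ + g_-^2/f_- = (g_+^2 - g_-^2)/f_+$, and since $g_++g_-$ starts at order $r^2$ while $g_+-g_-$ starts at order $r^3$, one obtains $g_+^2 - g_-^2 = O(r^5)$, producing an error of size $|\phi_x(x)|^{p-3} r^{p+2}$. Finally, the cubic remainder is bounded directly by $C|f_\pm|^{p-2}|g_\pm|^3/|f_\pm|^2 \leq C |\phi_x(x)|^{p-4} r^{p+2}$, using that $J_p'''(\eta_\pm)$ is bounded since $\eta_\pm$ is bounded away from $0$.

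Dividing by $r^p$ and invoking the elementary bound $|\phi_x(x)|^{p-3} \leq C(\|\phi\|_{C^1_b})\bigl(1 + |\phi_x(x)|^{p-4}\bigr)$ (which is immediate by splitting into the cases $|\phi_x(x)|\leq 1$ and $|\phi_x(x)|\geq 1$) collects every error into $C(p,d,\|\phi\|_{C^4_b})\,\bigl(\inf_{B_R(x)}|\phi_x|^{p-4}+1\bigr)r^2$, which is the desired estimate. The only technically delicate point is extracting the sharp $|\phi_x|^{p-4}$ dependence from the cubic remainder when $p<4$: here one must keep track of the fact that $J_p'''$ blows up near $0$ while simultaneously ensuring that $\eta_\pm$ remains bounded away from $0$ thanks to $r<R_0$. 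Once this is handled, the rest is bookkeeping of orders in $r$.
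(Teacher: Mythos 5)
Your proposal is correct and follows essentially the same route as the paper's proof, which takes $f=\phi_x(x)r$ and $g^\pm$ the Taylor remainder and applies the expansion \eqref{eq:estMaria} from Lemma \ref{Approx p greater 3} (after noting that in dimension one the nonvanishing of $\phi_x$ together with $r<R_0$ forces $|g^\pm/f|<1/2$), citing Lemma \ref{Approx p greater 4} directly for $p\geq 4$. The explicit tracking you do of the $|\phi_x|^{p-2}$, $|\phi_x|^{p-3}$, $|\phi_x|^{p-4}$ factors and their collection into $(\inf|\phi_x|^{p-4}+1)$ is exactly the bookkeeping the paper leaves implicit with "the rest of the proof follows line by line."
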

 \begin{proof}
 The result for $p\geq 4$ is precisely given by Lemma \ref{Approx p greater 4}. For $p\in(1,4)$ we follow the proof of Lemma \ref{Approx p greater 3}. We note that, in this case, the fact that the gradient does not vanish, directly implies that $\nabla \phi(x)\cdot y= \phi_x(x)r\not=0$ since there are no ortogonal directions. 
 We then take
 \[
 f(r)= \phi_x(x) r
 \] 
 and
 \[
 g^\pm(r)= \phi_{xx}(x) \frac{r^2}{2} \pm \phi_{xxx}(x)  \frac{r^3}{3!} + O(r^4) = \phi_{xx}(\xi_{\pm}) \frac{r^2}{2}
 \]
 for some $\xi_{\pm}\in B_r(x)$. With this in mind, we note that
 \[
\left| \frac{g^\pm(r)}{f(r)}\right| \leq \frac{\|\phi_{xx}\|_{L^\infty(B_R(x))} \frac{r^2}{2}}{\inf_{B_r(x) } \{|\phi_x|\} r} <\frac{1}{2} 
 \]
 as long as $r<R<\inf_{B_r(x) }\{|\phi_x|\}/\|\phi_{xx}\|_{L^\infty(B_R(x))}$. The rest of the proof follows line by line the proof of Lemma \ref{Approx p greater 3}. 
 \end{proof}

\subsection{Proofs of the asymptotic expansions in Theorem \ref{asympExpUnif}, Theorem \ref{asympExp} and Theorem \ref{thm:1Dresult} }
We start with the identities for the $p$-Laplacian. Observe that \eqref{LocalGrad} is already proven for every range of $p$ in Theorems \ref{teo4}, \ref{theo_p34}, \ref{teo_p23}, and \ref{teo_p12}. In the case $p\geq4$, \eqref{LocalUnif} is contained in Theorem \ref{teo4}. The case $p\in(2,4)$ can be analogously obtained  applying Lemmas \ref{p greater 3} and \ref{Approx p greater 2}.

In the fractional $p$-Laplacian case we start by splitting the operator in the singular and nonsingular part as follows 
$$-(-\Delta)^s_p\phi (x)= P.V.\int_{B_r}J_p(\phi(x+y)-\phi(x))\frac{\dd y}{|y|^{d+sp}}+\int_{\R^d\setminus B_r}J_p(\phi(x+y)-\phi(x))\frac{\dd y}{|y|^{d+sp}}.$$
Then \eqref{NonLocalGrad} is contained in Theorems \ref{teo4}, \ref{theo_p34}, \ref{teo_p23} and \ref{teo_p12}. In the case $p\geq 4$, \eqref{NonLocalUnif} has been proven in Theorem \ref{teo4}, and the rest of the cases can be deduced in the same way from Lemmas \ref{p greater 3} and \ref{Approx p greater 2}.

Finally, Theorem \ref{thm:1Dresult} for $\mathcal{M}_{r,1}^p$ is precisely Lemma \ref{1d} and the rest follows by integration as before.
  
 \section{Discretizations of the fractional $p$-Laplacian}\label{sec:disc}
 In this section we will prove Theorem \ref{DiscretOrder}. We use the notation of subsection \ref{subsec:disc}. Assume throughout the whole section that $\phi\in C^4_b(B_R(x))\cap L^\infty(\R^d)$, $x\in \R^d$, for some $R>0$. We will analyze separately the error of the discretizations/quadratures near and far from the origin of the integration domain. Let $r>0$ and denote
$$A_r(x):=\frac{1}{2}\int_{B_r}(J_p(\phi(x+y)-\phi(x))+J_p(\phi(x-y)-\phi(x)))\dd y,$$
and
$$\tilde{A}_r(x):=\frac{1}{2}\int_{\tilde{B}_r}(J_p(\phi(x+y)-\phi(x))+J_p(\phi(x-y)-\phi(x)))\dd y,$$
where 
$$\tilde{B}_r:=\bigcup_{y_\alpha\in B_r} Q_\alpha \quad \textup{with} \quad Q_{\alpha}:=y_\alpha + \frac{h}{2}[-1,1)^d.$$ 
Define also 
$$\tilde{A}_r^h(x):=h^d\sum_{y_\alpha\in B_r}J_p(\phi(x+y_\alpha)-\phi(x)),$$
which is just part of the first term of \eqref{Discret}.
\begin{lemma}\label{lem:ballorig}
Assume $h\leq r/4$. Then 
\[
|A_r(x)-\tilde{A}_r(x)|  = O(r^{p+d-1}h),
\]
and
$$|\tilde{A}_r(x)-\tilde{A}_r^h(x)|=\begin{cases}
O(r^{p-3}h^2r^d)\;\mbox{ if } p>3,\\
O(r^{p-2}h^2r^d+h^{p-1}r^d)\;\mbox{ if }p\in (2,3],\\
O(h^{p-1}r^d)\;\mbox{ if }p\in (1,2).
\end{cases}$$
\end{lemma}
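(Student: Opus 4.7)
The plan is to handle the two bounds separately, using the symmetry of both integration domains for the first and a cube-by-cube quadrature (midpoint-rule) analysis for the second.

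For the first estimate, I would exploit that both $B_r$ and $\tilde{B}_r$ are symmetric about the origin (since the lattice $h\Z^d$ is symmetric and the cubes $Q_\alpha$ are symmetric about $y_\alpha$). Hence the symmetric difference $B_r\triangle\tilde{B}_r$ is also symmetric and contained in the shell $\{r - h\sqrt{d}/2 \le |y| \le r + h\sqrt{d}/2\}$, whose Lebesgue measure is $O(r^{d-1}h)$ under the assumption $h\le r/4$. Writing $u = \phi(x+y)-\phi(x)$, $v = \phi(x-y)-\phi(x)$, the integrand of both $A_r$ and $\tilde{A}_r$ is $\frac{1}{2}(J_p(u)+J_p(v)) = \frac{1}{2}(J_p(u) - J_p(-v))$. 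I have $|u|,|v|\le \|\nabla\phi\|_\infty |y|$ and, crucially, $|u+v| = |\phi(x+y)+\phi(x-y)-2\phi(x)|\le \|D^2\phi\|_\infty |y|^2$. For $p\ge2$, the mean value theorem applied to $J_p$ together with $|J_p'|\le(p-1)(\max\{|u|,|v|\})^{p-2}$ then yields the pointwise bound $|J_p(u)+J_p(v)|\le C|y|^p$. Multiplying this $O(r^p)$ bound by the measure $O(r^{d-1}h)$ produces $O(r^{p+d-1}h)$, as claimed.

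For the second estimate, I would write $\tilde{A}_r - \tilde{A}_r^h = \sum_{y_\alpha\in B_r}\bigl(\int_{Q_\alpha} g\dd y - h^d g(y_\alpha)\bigr)$ with $g(y)=\frac{1}{2}(J_p(\phi(x+y)-\phi(x)) + J_p(\phi(x-y)-\phi(x)))$. Since each $Q_\alpha$ is centred and symmetric about $y_\alpha$, the linear term in the Taylor expansion of $g$ around $y_\alpha$ integrates to zero, so whenever $g\in C^2(Q_\alpha)$ the per-cube error is bounded by $C h^{d+2}\|D^2 g\|_{L^\infty(Q_\alpha)}$. For $p>3$, $J_p\in C^2$ globally and direct differentiation gives $|D^2 g(y)|\lesssim |y|^{p-3}$, so comparing the sum with a Riemann integral yields $\sum_\alpha h^{d+2}|y_\alpha|^{p-3}\lesssim h^2\int_{B_r}|y|^{p-3}\dd y = O(h^2 r^{p+d-3}) = O(r^{p-3}h^2 r^d)$. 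For $p\in(1,2]\cup(2,3]$, $J_p''$ is singular (or does not exist) at $0$, so I would split into ``near'' cubes ($|y_\alpha|\le\lambda h$, only $O(1)$ of them) and ``far'' cubes. On near cubes the direct estimate $|g(y)|\le C|y|^p$ gives a contribution $O(h^{p+d})\le O(h^{p-1}r^d)$. On far cubes, for $p\in(2,3]$ the $C^2$ estimate still applies (with a weaker blow-up) and summation gives $O(r^{p-2}h^2 r^d)$, whereas for $p\in(1,2)$ I would replace the second-derivative estimate by the Hölder bound $|g(y)-g(y_\alpha)|\lesssim |y-y_\alpha|^{p-1}$, giving a per-cube error $O(h^{p-1+d})$ and a total of $O(h^{p-1}r^d)$.

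The main obstacle is the $p\in(2,3]$ case: $J_p''$ blows up where the argument $\phi(x+y)-\phi(x)$ vanishes (which can occur even for $y$ away from the origin, whenever $\nabla\phi(x)\cdot y$ cancels higher-order terms), so the naive pointwise bound $|D^2 g|\lesssim |y|^{p-3}$ has to be replaced by a more careful estimate that separates the cubes on which $\phi(x+\cdot)-\phi(x)$ is small; achieving the exponent $r^{p-2}$ rather than $r^{p-3}$ is essentially what requires this refined decomposition.
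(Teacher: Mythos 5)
Your approach to the second estimate matches the paper's (symmetric midpoint-rule error per cube, then sum over cubes), and your shell argument for the first estimate is a clean self-contained alternative to the paper's, which simply defers to a citation. The main gap is the one you yourself flag for $p\in(2,3]$: your near/far split at $|y_\alpha|\lesssim h$ does not tame the blow-up of $J_p''$, since $\Phi(y):=\phi(x+y)-\phi(x)$ can vanish on far cubes as well. The resolution is to do the dichotomy per cube on the size of $|\Phi(y_\alpha)|$ versus $h$ rather than $|y_\alpha|$ versus $h$. If $|\Phi(y_\alpha)|\geq Kh$ for $K$ large, then (since $\phi$ is Lipschitz and $Q_\alpha$ has diameter $\lesssim h$) $|\Phi|\gtrsim h$ on all of $Q_\alpha$ with a fixed sign, so $\|D^2 g\|_{L^\infty(Q_\alpha)}\lesssim h^{p-3}$ and the midpoint error per cube is $\lesssim h^{d+2}h^{p-3}=h^{p-1+d}$; if $|\Phi(y_\alpha)|\lesssim h$, then $|g|\lesssim h^{p-1}$ on $Q_\alpha$ and the per-cube error is again $\lesssim h^{p-1+d}$. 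Summing the $O\bigl((r/h)^d\bigr)$ cubes gives $O(h^{p-1}r^d)$, which implies the stated bound because $r^{p-2}h^2\le h^{p-1}$ whenever $h\le r<1$ and $p\le3$. This is precisely what the paper's pointwise estimate $O(r^{p-2}h^2+h^{p-1})$ on the symmetric second difference encodes.

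A second, smaller gap: in the first estimate your pointwise cancellation $|J_p(u)+J_p(v)|\le C|y|^p$ is established only for $p\ge2$ (where $J_p'$ is monotone, so the mean value theorem gives the stated $|J_p'|\le(p-1)\max\{|u|,|v|\}^{p-2}$), yet the lemma is claimed for all $p>1$. For $p\in(1,2)$ the bound fails at degenerate points: with $\phi(y)=|y|^2$ and $x=0$ one has $u=v=|y|^2$ and $J_p(u)+J_p(v)=2|y|^{2(p-1)}$, and $2(p-1)<p$. So the shell-measure argument alone does not close the case $p\in(1,2)$; one either needs the nondegeneracy $\nabla\phi(x)\ne0$ (which is in force wherever the lemma is used downstream for $p<2$), or a separate argument as in the reference the paper cites for this bound.
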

\begin{proof}
We follow the same strategy of \cite[Proof of Theorem 1.1]{dTLi22}, but computing the precise asymptotic orders. Actually, it can be seen there that 
\begin{equation*}
\begin{split}|A_r(x)-\tilde{A}_r(x)| & = O(r^{p+d-1}h).
\end{split}
\end{equation*} 
Let us estimate now $|\tilde{A}_r(x)-\tilde{A}_r^h(x)|$. Since $h^d=|Q_\alpha|$ we can write
\begin{equation}\begin{split}\label{difTilde}
|\tilde{A}_r&(x)-\tilde{A}_r^h(x)|\\
&=\frac{1}{2}\bigg|\sum_{y_\alpha\in B_r}\int_{Q_0}(J_p(\phi(x+y_\alpha+y)-\phi(x))+J_p(\phi(x+y_\alpha-y)-\phi(x))-2J_p(\phi(x+y_\alpha)-\phi(x)))\dd y\bigg|,
\end{split}\end{equation}
and it can be seen that
\begin{equation*}\begin{split}
|J_p(\phi(x+y_\alpha+y)&-\phi(x))+J_p(\phi(x+y_\alpha-y)-\phi(x))-2J_p(\phi(x+y_\alpha)-\phi(x))|\\
&=\begin{cases}
O(r^{p-3}h^2)\;\mbox{ if } p>3,\\
O(r^{p-2}h^2+h^{p-1})\;\mbox{ if }p\in (2,3],\\
O(h^{p-1})\;\mbox{ if }p\in (1,2).
\end{cases}
\end{split}\end{equation*}
The result follows by replacing this information in \eqref{difTilde} and noticing that, since $h=O(r)$, we have $\sum_{y_\alpha\in B_r} h^d=|\tilde{B}_r|\leq |B_{r+\sqrt{d}h}|=O(r^d)$.
\end{proof}
Let us estimate now the error far from the origin. Define
$$T_r^s(x):=\int_{\R^d\setminus B_r}J_p(\phi(x+y)-\phi(x)) \frac{\dd y}{|y|^{d+sp}},$$
\[
\tilde{T}_r^s(x):=\frac{1}{2}\int_{\R^d \setminus\tilde{B}_r}(J_p(\phi(x+y)-\phi(x))+J_p(\phi(x-y)-\phi(x))) \frac{\dd y}{|y|^{d+sp}},
\]
and
$$\tilde{T}_{r,j}^{s,h}(x):=\sum_{y_\alpha\in \R^d\setminus B_r}J_p(\phi(x+y_\alpha)-\phi(x))W_{\alpha,i},\quad i=1,2,$$
with $W_{\alpha,i}$ defined in \eqref{def:Wj}.
\begin{lemma}\label{estT1}
Assume $h\leq r/4$. Then
\[
|T_r^s(x)-\tilde{T}_r^s(x)| =O(h r^{p-1-sp})
\]
and 
$$|\tilde{T}_r^s(x)-\tilde{T}_{r,1}^{s,h}(x)|=\begin{cases}
O(h) \;\mbox{ if }p\geq 2 \mbox{ and } p\geq \frac{2}{1-s},\\
O(hr^{p-2-sp})\;\mbox{ if }p\geq 2 \mbox{ and } p< \frac{2}{1-s},\\
O(h^{p-1}r^{-sp})\;\mbox{ if }1<p<2.
\end{cases}$$
\end{lemma}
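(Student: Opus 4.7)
The lemma contains two quadrature-error estimates around the splitting radius $r$, handled by different techniques.

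\textbf{Bounding $|T_r^s - \tilde{T}_r^s|$.} Both $B_r$ and $\tilde B_r$ are symmetric about the origin, the measure $\dd y/|y|^{d+sp}$ is even, and $J_p$ is odd. Hence $T_r^s$ equals its symmetrized version
\[
T_r^s(x) = \frac{1}{2}\int_{\R^d\setminus B_r}\bigl[J_p(\phi(x+y)-\phi(x)) + J_p(\phi(x-y)-\phi(x))\bigr]\frac{\dd y}{|y|^{d+sp}}.
\]
Subtracting $\tilde T_r^s$ localizes the difference to the symmetric difference $B_r\triangle \tilde B_r$, which is contained in an annulus of width $h\sqrt{d}/2$ around $\partial B_r$ and thus has volume $O(r^{d-1}h)$. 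Since $\phi\in C^4_b$ near $x$, a Taylor expansion of $J_p$ (valid because $|y|\lesssim r < R/2$) eliminates the odd leading terms and gives $|J_p(\phi(x+y)-\phi(x)) + J_p(\phi(x-y)-\phi(x))| \leq C|y|^p$ on this annulus. With $|y|\sim r$ and $|y|^{d+sp}\sim r^{d+sp}$, combining the three factors yields the claimed $O(r^p\cdot r^{-d-sp}\cdot r^{d-1}h) = O(h\,r^{p-1-sp})$.

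\textbf{Bounding $|\tilde T_r^s - \tilde T_{r,1}^{s,h}|$.} Since $\R^d\setminus \tilde B_r$ is the disjoint union of the cubes $Q_\alpha$ with $y_\alpha\notin B_r$, and by symmetry $\tilde T_r^s$ coincides with the un-symmetrized integral over this union, the error is a sum of midpoint-quadrature errors
\[
\tilde T_r^s - \tilde T_{r,1}^{s,h} = \sum_{y_\alpha\notin B_r}\int_{Q_\alpha}\bigl[g(y)-g(y_\alpha)\bigr] w(y)\,\dd y, \qquad g(y):=J_p(\phi(x+y)-\phi(x)),\quad w(y):=|y|^{-(d+sp)}.
\]
I would split the sum into a smooth region (cubes with $x+Q_\alpha\subset B_R(x)$) and a tail region. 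In the smooth region, for $p\geq 2$ the inequality $|J_p(a)-J_p(b)|\lesssim (|a|+|b|)^{p-2}|a-b|$ together with $|\phi(x+y)-\phi(x+y_\alpha)|\leq \|\nabla\phi\|_{L^\infty(B_R(x))}h$ gives $|g(y)-g(y_\alpha)|\leq C|y_\alpha|^{p-2}h$. A Riemann-sum estimate
\[
\sum_{y_\alpha\in B_{R/2}\setminus B_r}h^d |y_\alpha|^{p-2}|y_\alpha|^{-(d+sp)} \asymp \int_r^{R/2}\rho^{p-3-sp}\,\dd \rho
\]
is dominated at $\rho=r$ when $p-2-sp<0$, giving $O(h\,r^{p-2-sp})$, and is bounded by an $R$-dependent constant otherwise, giving $O(h)$. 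For $p\in(1,2)$ the Hölder bound $|J_p(a)-J_p(b)|\leq C|a-b|^{p-1}$ replaces Lipschitz, producing $|g(y)-g(y_\alpha)|\leq Ch^{p-1}$ and the announced $O(h^{p-1}r^{-sp})$ after integrating $w$ against the measure of the outer tail $\int_r^\infty \rho^{-1-sp}\,\dd \rho \asymp r^{-sp}$.

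\textbf{Main obstacle.} The delicate point is the outer tail (cubes where $\phi$ is only in $L^\infty(\R^d)$ and no Taylor expansion for $g$ is available), where a crude bound $|g(y)-g(y_\alpha)|\leq 2\|g\|_{L^\infty}$ would give an $O(1)$ error that ruins the desired estimates. The key is to exploit that $W_{\alpha,1}$ is \emph{exactly} $\int_{Q_\alpha}w\,\dd y$: decomposing
\[
\int_{Q_\alpha}(g(y)-g(y_\alpha))w(y)\,\dd y = w(y_\alpha)\int_{Q_\alpha}(g(y)-g(y_\alpha))\,\dd y + \int_{Q_\alpha}(g(y)-g(y_\alpha))(w(y)-w(y_\alpha))\,\dd y
\]
transfers the smoothness burden to $w$, which satisfies $|\nabla w|\lesssim |y|^{-(d+sp+1)}$ and $|D^2w|\lesssim |y|^{-(d+sp+2)}$. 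Combining $|g-g(y_\alpha)|\leq 2\|g\|_{L^\infty}$ with this decay and summing over the tail gives a contribution whose product with the tail measure yields an error of lower order than the smooth-region bound, so the stated $r$-dependent estimates survive.
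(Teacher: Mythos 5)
Your first estimate and the inner (``smooth'') part of the second estimate follow the same lines as the paper. The paper reduces the first bound to Lemma~4.1 (itself referencing \cite{dTLi22}), and for the second bound it writes the error as
\[
\tilde T_r^s(x)-\tilde T_{r,1}^{s,h}(x) \;=\; \sum_{y_\alpha\notin B_r}\int_{Q_\alpha}\bigl(J_p(\phi(x+y)-\phi(x))-J_p(\phi(x+y_\alpha)-\phi(x))\bigr)\frac{\dd y}{|y|^{d+sp}},
\]
splits at $|y_\alpha|=1$, and in $B_1\setminus B_r$ uses the inequality $|J_p(a)-J_p(b)|\lesssim(|a|^{p-2}+|b|^{p-2})|a-b|$ together with $|a|,|b|\lesssim|y|$ exactly as you do. So far the two arguments agree.

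The genuine problem is your treatment of the outer tail. You correctly flag that a crude bound $|g(y)-g(y_\alpha)|\le 2\|g\|_{L^\infty}$ carries no factor of $h$; but the decomposition you propose,
\[
\int_{Q_\alpha}(g(y)-g(y_\alpha))w(y)\,\dd y \;=\; w(y_\alpha)\int_{Q_\alpha}(g(y)-g(y_\alpha))\,\dd y + \int_{Q_\alpha}(g(y)-g(y_\alpha))\bigl(w(y)-w(y_\alpha)\bigr)\,\dd y,
\]
does not ``transfer the smoothness burden to $w$.'' The smoothness of $w$ only controls the second term (which indeed yields $O(h)$ when summed against the decay $|\nabla w|\lesssim |y|^{-d-sp-1}$). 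The first term, $w(y_\alpha)\int_{Q_\alpha}(g(y)-g(y_\alpha))\,\dd y$, is a midpoint-rule error for $g$ itself; with $g$ only $L^\infty$ it is merely $O(\|g\|_{L^\infty}h^d\,w(y_\alpha))$ per cube, and summing over the tail gives $\sum w(y_\alpha)h^d\asymp\int_{|y|\gtrsim 1}|y|^{-d-sp}\,\dd y = O(1)$, not $O(h)$. So the obstacle you identified is not resolved by this decomposition. The paper sidesteps the issue by using, for all $y\in Q_\alpha$ with $|y_\alpha|\ge1$, the bound $|J_p(\phi(x+y)-\phi(x))-J_p(\phi(x+y_\alpha)-\phi(x))|\le C|\phi(x+y)-\phi(x+y_\alpha)|\le Ch$ (Lipschitz continuity of $J_p$ on bounded sets for $p\ge2$, and Lipschitz/H\"older continuity of $\phi$), so that the whole tail contributes $Ch\int_{|y|\ge1/2}|y|^{-d-sp}\,\dd y = O(h)$. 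If you want to avoid assuming more than $\phi\in L^\infty$ outside $B_R(x)$, you would need a different device (truncation, or a cancellation argument); the splitting you propose is not it.
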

\begin{proof}
Similarly to the proof of Lemma \ref{lem:ballorig}, we get
\[
|T_r^s(x)-\tilde{T}_r^s(x)| =O(h r^{p-1-sp}).
\]
On the other hand, we can write
$$|\tilde{T}_r^s(x)-\tilde{T}_{r,1}^{s,h}(x)|=\bigg|\sum_{y_\alpha\in \R^d\setminus B_r} \int_{Q_\alpha}(J_p(\phi(x+y)-\phi(x))-J_p(\phi(x+y_\alpha)-\phi(x))) \frac{\dd y}{|y|^{d+sp}}\bigg|.$$
If $p\geq 2$, using the regularity and the boundedness of $\phi$ we have, for every $y_\alpha$ and every $y\in Q_\alpha$,
$$|J_p(\phi(x+y)-\phi(x))-J_p(\phi(x+y_\alpha)-\phi(x))|\leq C|\phi(x+y)-\phi(x+y_\alpha)|\leq C|y-y_\alpha|\leq Ch.$$
Thus, we can deduce that
\begin{equation}\begin{split}\label{T1}
\bigg|\sum_{y_\alpha\in \R^d\setminus B_1} \int_{ Q_\alpha}&(J_p(\phi(x+y)-\phi(x))-J_p(\phi(x+y_\alpha)-\phi(x))) \frac{\dd y}{|y|^{d+sp}}\bigg|\\
&\leq Ch\sum_{y_\alpha\in \R^d\setminus B_1} \int_{Q_\alpha}\frac{\dd y}{|y|^{d+sp}}\leq Ch\int_{\R^d\setminus B_{1/2}}\frac{\dd y}{|y|^{d+sp}}\leq Ch.
\end{split}\end{equation}
To estimate the case $y_\alpha\in B_1\setminus B_r$, observe that if $h\leq r$ then, for every $y\in (B_1\setminus B_r)\cap Q_\alpha$, there exists a constant $c_0$, depending only on the dimension, such that $|y|\geq c_0h$, and hence
$$|\phi(x+y_\alpha)-\phi(x)|\leq C|y_\alpha|\leq C(|y|+h)\leq C|y|.$$
Therefore, we can use  \cite[Lemma A.1]{dTLi22} to get
\begin{equation}\begin{split}\label{T2}
\bigg|\sum_{y_\alpha\in B_1\setminus B_r} &\int_{ Q_\alpha}(J_p(\phi(x+y)-\phi(x))-J_p(\phi(x+y_\alpha)-\phi(x))) \frac{\dd y}{|y|^{d+sp}}\bigg|\\
&\leq Ch\sum_{y_\alpha\in B_1\setminus B_r} \int_{ Q_\alpha}(|\phi(x+y)-\phi(x)|^{p-2}+|\phi(x+y_\alpha)-\phi(x)|^{p-2}) \frac{\dd y}{|y|^{d+sp}}\\
&\leq Ch\int_{ B_{3/2}\setminus B_{r/2}}\ |y|^{p-2}\frac{\dd y}{|y|^{d+sp}}\leq Ch(1+r^{p-2-sp}).
\end{split}\end{equation}
Putting together \eqref{T1} and \eqref{T2} the result for $p\geq 2$ follows.

If $p\in(1,2)$ we can use the H\"older continuity of $J_p$ to deduce, for every $y_\alpha$ and every $y\in Q_\alpha$,
$$|J_p(\phi(x+y)-\phi(x))-J_p(\phi(x+y_\alpha)-\phi(x))|\leq C|\phi(x+y)-\phi(x+y_\alpha)|^{p-1}\leq Ch^{p-1}$$
and hence 
\begin{equation*}\begin{split}
\bigg|\sum_{y_\alpha\in \R^d\setminus B_r} &\int_{ Q_\alpha}(J_p(\phi(x+y)-\phi(x))-J_p(\phi(x+y_\alpha)-\phi(x))) \frac{\dd y}{|y|^{d+sp}}\bigg|\\
&\leq Ch^{p-1}\sum_{y_\alpha\in \R^d\setminus B_r} \int_{ Q_\alpha}\frac{\dd y}{|y|^{d+sp}}\leq Ch^{p-1}\int_{\R^d\setminus B_{r/2}}\frac{\dd y}{|y|^{d+sp}}\leq Ch^{p-1}(1+r^{-sp}).
\end{split}\end{equation*}
\end{proof}
Let us analyze now the second discretization.
\begin{lemma}\label{estT2}
Assume $h\leq \frac{r}{4\sqrt{d}}$. Then 
$$|\tilde{T}_r^s(x)-\tilde{T}^{s,h}_{r,2}(x)|=\begin{cases}
O(h) \;\mbox{ if }p\geq 2 \mbox{ and }p\geq \frac{2}{1-s},\\
O(hr^{p-2-sp})\;\mbox{ if }p\geq 2\mbox{ and }p< \frac{2}{1-s},\\
O(h^{p-1}r^{-sp}+hr^{p-2-sp})\;\mbox{ if }1<p<2.
\end{cases}$$
\end{lemma}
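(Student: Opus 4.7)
The plan is to decompose the error by inserting the first discretization as an intermediate quantity:
\[
\tilde{T}_r^s - \tilde{T}^{s,h}_{r,2} = \bigl(\tilde{T}_r^s - \tilde{T}^{s,h}_{r,1}\bigr) + \bigl(\tilde{T}^{s,h}_{r,1} - \tilde{T}^{s,h}_{r,2}\bigr),
\]
so that Lemma \ref{estT1} takes care of the first piece in each of the three regimes and only the purely quadrature-type difference
\[
\tilde{T}^{s,h}_{r,1} - \tilde{T}^{s,h}_{r,2} = \sum_{y_\alpha\in\R^d\setminus B_r} J_p(\phi(x+y_\alpha)-\phi(x))\left(\int_{Q_\alpha}\frac{\dd y}{|y|^{d+sp}}-\frac{h^d}{|y_\alpha|^{d+sp}}\right)
\]
needs to be estimated directly. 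This is the technical core.

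For the cube-wise error, I would write $\int_{Q_\alpha}|y|^{-d-sp}\dd y = \int_{Q_0}|y_\alpha+z|^{-d-sp}\dd z$ and Taylor-expand $f(z):=|y_\alpha+z|^{-d-sp}$ around $z=0$ to second order. The CFL assumption $h\leq r/(4\sqrt d)$ and $|y_\alpha|\geq r$ ensure that $|y_\alpha+z|\asymp |y_\alpha|$ for all $z\in Q_0$, so $|D^2 f(\xi)|\leq C|y_\alpha|^{-d-sp-2}$. The crucial point is that the first-order term $\nabla f(0)\cdot\int_{Q_0}z\dd z$ vanishes by the symmetry of $Q_0$ about the origin; hence
\[
\left|\int_{Q_\alpha}\frac{\dd y}{|y|^{d+sp}}-\frac{h^d}{|y_\alpha|^{d+sp}}\right|\leq C\,\frac{h^{d+2}}{|y_\alpha|^{d+sp+2}},
\]
which gains an extra factor $h$ over a naive mean-value estimate.

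Summing over $\alpha$ and recognizing a Riemann sum, one then has to bound
\[
h^2\int_{\R^d\setminus B_{r/2}}\frac{|J_p(\phi(x+y)-\phi(x))|}{|y|^{d+sp+2}}\dd y.
\]
Splitting into $|y|\leq 1$ (use $|\phi(x+y)-\phi(x)|\leq C|y|$, giving $|J_p|\leq C|y|^{p-1}$ for all $p>1$) and $|y|>1$ (use boundedness of $\phi$), the tail integral is $O(1)$ and the near-field integral is $\int_r^1 \rho^{p-sp-4}\dd\rho = O(r^{p-sp-3})$ whenever $p(1-s)<3$. The quadrature contribution is thus $O(h^2 r^{\min\{0,p-sp-3\}})$. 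Using $h\leq r\leq 1$, one gets $h^2 r^{p-sp-3}\leq h\cdot r^{p-sp-2}=h\,r^{p-2-sp}$, and then separately: for $p\geq 2/(1-s)$ the exponent $p-2-sp\geq0$ gives $\leq h$; for $2\leq p<2/(1-s)$ this matches $h\,r^{p-2-sp}$; for $1<p<2$ this is precisely the new $h\,r^{p-2-sp}$ term that appears in the statement in addition to the term $h^{p-1}r^{-sp}$ coming from Lemma \ref{estT1}.

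The main obstacle will be verifying that the symmetry-induced cancellation really is needed (a mean-value bound $O(h^{d+1}/|y_\alpha|^{d+sp+1})$ would yield a useless $O(h)$ term in the $p<2$ regime) and carefully comparing the three cases against the bounds already supplied by Lemma \ref{estT1}, so that the final bound is really the stated maximum of the two contributions and not something worse.
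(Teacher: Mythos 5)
Your decomposition $\tilde{T}_r^s-\tilde{T}^{s,h}_{r,2}=(\tilde{T}_r^s-\tilde{T}^{s,h}_{r,1})+(\tilde{T}^{s,h}_{r,1}-\tilde{T}^{s,h}_{r,2})$ is exactly the one the paper uses (there labelled $I_1+I_2$), the first piece is handled by Lemma \ref{estT1} in both, and the near-field/far-field split of the quadrature contribution is also the same. The one place you differ is the per-cube estimate: you Taylor-expand $|y_\alpha+z|^{-d-sp}$ to second order and invoke the symmetry of $Q_0$ to kill the first-order term, obtaining $O(h^{d+2}/|y_\alpha|^{d+sp+2})$, while the paper is content with the first-order mean-value bound $\left|\frac{1}{|y|^{d+sp}}-\frac{1}{|y_\alpha|^{d+sp}}\right|\leq Ch/|y|^{d+sp+1}$. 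Your own last step shows why the extra power of $h$ is wasted: you convert $h^2 r^{p-sp-3}$ into $h\cdot r\cdot r^{p-sp-3}=hr^{p-2-sp}$ via $h\lesssim r$, which is precisely what the first-order bound yields directly. The concern you flag — that the mean-value bound would "yield a useless $O(h)$ term in the $p<2$ regime" — is unfounded: for $1<p<2$ one has $p-2-sp<0$ and $r<1$, hence $hr^{p-2-sp}>h$, so the $O(h)$ from the far-field tail is dominated by the $hr^{p-2-sp}$ term that the statement already allows. In short, your argument is correct and produces the stated bounds, but the symmetry cancellation is an unnecessary refinement; the paper's simpler first-order estimate suffices across all three regimes.
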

\begin{proof}
Since $h^d=|Q_\alpha|$ for every $\alpha$, we can write
\begin{equation*}\begin{split}
|\tilde{T}_r^s(x)-\tilde{T}^{s,h}_{r,2}(x)|=&\,\bigg|\sum_{y_\alpha\in \R^d\setminus B_r}\int_{Q_\alpha}\left(\frac{J_p(\phi(x+y)-\phi(x))}{|y|^{d+sp}}-\frac{J_p(\phi(x+y_\alpha)-\phi(x))}{|y_\alpha|^{d+sp}}\right)\dd y\\
=&\,\bigg|\underbrace{\sum_{y_\alpha\in \R^d\setminus B_r}\int_{Q_\alpha}\left(\frac{J_p(\phi(x+y)-\phi(x))-J_p(\phi(x+y_\alpha)-\phi(x))}{|y|^{d+sp}}\right)\dd y}_{I_1}\\
&+\underbrace{\sum_{y_\alpha\in \R^d\setminus B_r}\int_{Q_\alpha}J_p(\phi(x+y_\alpha)-\phi(x))\left(\frac{1}{|y|^{d+sp}}-\frac{1}{|y_\alpha|^{d+sp}}\right)\dd y}_{I_2}\bigg|.
\end{split}\end{equation*}
Notice that $I_1$ is $\tilde{T}_r^s(x)-\tilde{T}_{r,1}^{s,h}(x)$ and Lemma \ref{estT1} provides the bound. We split $I_2$ into two regions,
$$I_2=\underbrace{\sum_{y_\alpha\in \R^d\setminus B_1}\int_{ Q_\alpha}\ldots}_{I_{21}}\;+\underbrace{\sum_{y_\alpha\in B_1\setminus B_r}\int_{Q_\alpha}}_{I_{22}}\ldots$$
Notice that if $y\in Q_\alpha$ it can be written as $y=y_\alpha +\frac{h}{2}v$, with $|v|<\sqrt{d}$, and hence
\begin{equation}\label{yyalpha}
\left|\frac{1}{|y|^{d+sp}}-\frac{1}{|y_\alpha|^{d+sp}}\right| \leq C\frac{ 1}{\min_{z\in Q_\alpha}|z|^{d+sp+1}}|y-y_\alpha| \leq C \frac{h}{|y|^{d+sp+1}}
\end{equation}
Let us consider first the case $y_\alpha\in \R^d\setminus B_1$. Using the boundedness of $\phi$ and \eqref{yyalpha} we get
\begin{equation*}\begin{split}
|I_{21}|&\leq C\sum_{y_\alpha\in \R^d\setminus B_1}\int_{ Q_\alpha}\frac{h}{|y|^{d+sp+1}}\dd y\leq Ch\int_{\R^d\setminus B_{1/2}}\frac{\dd y}{|y|^{d+sp+1}}\leq Ch.
\end{split}\end{equation*}
Likewise, using that
$|J_p(\phi(x+y_\alpha)-\phi(x))|\leq C(|y|+h)^{p-1}\leq C|y|^{p-1},$
and \eqref{yyalpha} we see that
$$|I_{22}|\leq Ch\int_{B_{3/2}\setminus B_{r/2}}\frac{|y|^{p-1}}{|y|^{d+1+sp}}\dd y\leq Ch(1+r^{p-2-sp}).$$
Therefore
$$|I_2|=\begin{cases}
O(h) \;\mbox{ if }p\geq \frac{2}{1-s},\\
O(hr^{p-2-sp})\;\mbox{ if }p< \frac{2}{1-s},
\end{cases}$$
and the result follows.
\end{proof}

\begin{proof}[Proof of Theorem \ref{DiscretOrder}]
By either Theorem \ref{asympExpUnif}, Theorem \ref{asympExp} or Theorem \ref{thm:1Dresult} we have
\begin{equation*}\begin{split}
|(-\Delta)^{s,h}_{p,i}\phi(x)-(-\Delta)^s_p\phi(x)|&\leq |(-\Delta)^{s,h}_{p,i}\phi(x)-\mathcal{M}_r^{s,p}[\phi](x)|+|\mathcal{M}_r^{s,p}[\phi](x)-(-\Delta)^s_p\phi(x)|,\\
&=|(-\Delta)^{s,h}_{p,i}\phi(x)-\mathcal{M}_r^{s,p}[\phi](x)|+O(r^{\gamma+p(1-s)}),\quad i=1,2,
\end{split}\end{equation*}
as $r\to 0^+$, with $\gamma$ given in either \eqref{def:gamma} or \eqref{def:nu}. Notice that 
\begin{equation*}\begin{split}
|(-\Delta)^{s,h}_{p,i}&\phi(x)-\mathcal{M}_r^{s,p}[\phi](x)|\leq \frac{p+d}{p(1-s)r^{d+sp}}|A_r(x)-\tilde{A}_r^h(x)|+|T_r^s(x)-\tilde{T}_{r,j}^{s,h}(x)|\\
&\leq C\left( \frac{1}{r^{d+sp}}|A_r(x)-\tilde{A}_r(x)|+ \frac{1}{r^{d+sp}}|\tilde{A}_r(x)-\tilde{A}_r^h(x)|+|T_r^s(x)-\tilde{T}_r^s(x)|+|\tilde{T}_r^s(x)-\tilde{T}_{r,j}^{s,h}(x)|\right),
\end{split}\end{equation*}
where $C$ is a positive constant depending on $d$, $p$ and $s$. Applying Lemma \ref{estT1} and Lemma  \ref{estT2} we conclude the proof with a suitable choice of $r\asymp h^\mu$ with $\mu\in(0,1]$.
\end{proof}

\subsection{Precise orders in terms of $h$ and $r$}\label{subsec:orders}
We estimate precisely 
\[
E:=|(-\Delta)^{s,h}_{p,i}\phi(x)-(-\Delta)_p^s\phi(x)|
\]
according to the errors of the asymptotic expansion proved in Theorem \ref{asympExpUnif}, Theorem \ref{asympExp}. Similar results can be obtained in dimension $d=1$ using Theorem \ref{thm:1Dresult}.

Let $\gamma$ be given by either \eqref{def:gamma}, \eqref{def:nu}, and assume $r\asymp h^\mu$ for a suitable choice of $\mu\in(0,1]$. We distinguish three ranges:
\begin{itemize}
\smallskip

\item[(i)]\textbf{Case $p>3$.} We have
\[
E=O(r^{p(1-s)-3}h^2+ r^{\min\{0,p(1-s)-2\}}h + r^{\gamma+p(1-s)})
\] 
We observe that:

If $p(1-s)\geq2$, we have convergence for every $ \mu  \leq1$. In particular, taking $\mu=1$ we get $E=O(h)$, which is computationally efficient (first order discretization in $h$).

If $p(1-s)<2$, convergence is ensured for $\mu<1/(2-p(1-s))$. In particular, the optimal choice $\mu=\frac{1}{\gamma +2}$ gives $E=O(h^{\frac{\gamma+p(1-s)}{\gamma+2}})$, which is always sublinear but at least $E=O(h^{\frac{1}{3}-\delta})$ for all $\delta>0$. When the gradient does not vanish, we get $E=O(h^{\frac{1}{2}-\delta})$.

\smallskip

\item[(ii)]\textbf{Case $p\in(2,3]$.} Here

\[
E=O(h^{p-1}r^{-sp}+ r^{\min\{0,p(1-s)-2\}}h + r^{\gamma+p(1-s)})
\] 
Notice that:

If $p(1-s)\geq2$, we have convergence for every $\mu<(p-1)/(sp)$. In particular, the  choice $\mu=\frac{1}{\gamma+p(1-s)}$ leads to $E=O(h)$.

If $p(1-s)<2$, convergence is ensured for $\mu<1/(2-p(1-s))$. In particular, the optimal choice $\mu=\frac{1}{\gamma +2}$ gives $E=O(h^{\frac{\gamma+p(1-s)}{\gamma+2}})$. Away from the zero gradient points this is at least $E=O(h^{\frac{1}{3}-\delta})$ for all $\delta >0$.
\smallskip

\item[(iii)]\textbf{Case $p\in(1,2)$.} We obtain
\[
E=O(h^{p-1}r^{-sp}+ r^{\gamma+p(1-s)})
\]
Convergence is ensured for $\mu<(p-1)/(sp)$. The choice $\mu=(p-1)/(\gamma+p)$ leads to $E=O(h^{(p-1)\left(1-\frac{sp}{\gamma+p}\right)})$.
\end{itemize}
\smallskip

 As an illustrative example, we consider the function $\phi:\R\to \R$ given by $\phi(x)=\max\{0,x\}^s$ and run numerical simulations using the above discretization in certain range of the parameters $p$ and $s$. It is known that for all $s\in(0,1)$ and all $p> 2$ we have that $(-\Delta)^s_p\phi(x)=0$ for all $x>0$ (see \cite[Lemma 3.1]{IaMoSq16}). To run the simulation, we truncate the function $\phi$ for $|x|$ large enough so that the truncation error does not interfere with the numerical error. We present the results in Figure \ref{fig:p(1-s)mayor2pmayor3}.

\begin{figure}[h!]
         \centering
         \includegraphics[width=\textwidth]{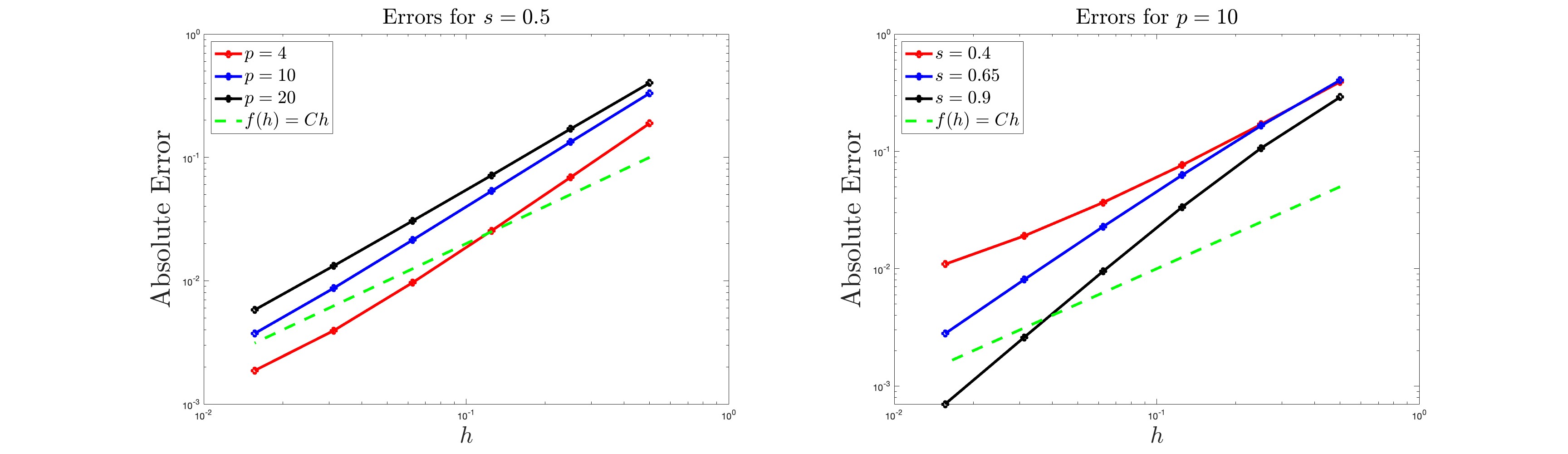}
         \caption{$p(1-s)\geq2$ and $p\geq3$}
         \label{fig:p(1-s)mayor2pmayor3}
  \end{figure}

 \section{Application to a parabolic problem}\label{sec:app}

 In this section we give an explicit finite difference numerical scheme to solve the parabolic problem \eqref{parabolic problem} for H\"{o}lder continuous data. We will apply the stability conditions \eqref{as:cfl} together with the consistency result Theorem \ref{DiscretOrder} to get convergence of the schemes to continuous viscosity solutions of \eqref{parabolic problem}. 
 Throughout the section, we will follow the notation of subsections \ref{subsec:disc} and \ref{subs: aplication}.

 We start by proving a technical result about the weights $\omega_\alpha$. For such purposes, given $\nu>0$, let us define
 \[
 S_\nu(r):=\left\{\begin{split}
 r^{-sp+\nu} \quad &\textup{if} \quad \nu\in (0, sp) \\
 |\log(r)| \quad &\textup{if} \quad \nu= sp \\
 1 \quad &\textup{if} \quad \nu> sp. 
 \end{split}\right.
 \]
 \begin{lemma}\label{lem:propwe}
 Let $p>2$, $s\in(0,1)$ and $h,r\in(0,1)$ such that $h\leq\frac{\sqrt{d}}{2}r$. Assume \eqref{as:weights} and let $\LL_h$ be given by \eqref{eq:Lh}. Then
 \begin{enumerate}[\rm (a)]
 \item \label{lem:propwe-item1}  The weights are symmetric and positive, i.e., $\omega_\alpha=\omega_{-\alpha}\geq0$ for all $\alpha\in \Z\setminus\{0\}$.
\item \label{lem:propwe-item2} The following summability properties hold: there exists a constant $C_{s,p,d}\geq 1  $ such that
 \[
 \sum_{\alpha\not=0} \omega_\alpha\leq \frac{C_{s,p,d}}{r^{sp}}, \quad   \sum_{|y_\alpha|\geq1} \omega_{\alpha} \leq C_{s,p,d} \quad \text{and}  \quad \sum_{0<|y_\alpha|<1}|y_\alpha|^{\nu} \omega_\alpha\leq C_{s,p,d}S_\nu(r).
 \]
 \end{enumerate}
 \end{lemma}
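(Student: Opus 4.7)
The plan is to derive (a) directly from the definition of the weights and to obtain (b) by a standard Riemann-sum comparison with integrals of $|y|^{-d-sp}$ and $|y|^{\nu-d-sp}$.

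For (a), in the regime $|y_\alpha|<r$ the weight is a positive constant independent of $\alpha$, so symmetry and nonnegativity are immediate. In the regime $|y_\alpha|\geq r$, both $W_{\alpha,1}$ and $W_{\alpha,2}$ are manifestly nonnegative; the identity $W_{-\alpha,1}=W_{\alpha,1}$ follows from the change of variable $y\mapsto -y$ combined with $Q_{-\alpha}=-Q_\alpha$ and the evenness of $|y|^{-d-sp}$, while $W_{-\alpha,2}=W_{\alpha,2}$ is trivial since $|y_{-\alpha}|=|y_\alpha|$.

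The key technical input for (b) is that the assumption $h\leq \tfrac{\sqrt{d}}{2}r$ gives $|y|\asymp |y_\alpha|$ uniformly on $Q_\alpha$ whenever $|y_\alpha|\geq r$. This yields $W_{\alpha,1}\asymp W_{\alpha,2}$ with constants depending only on $s,p,d$, so both choices of weights can be treated simultaneously, and for every $\rho\geq r$,
$$\sum_{|y_\alpha|\geq \rho}|y_\alpha|^\nu W_{\alpha,i}\leq C_{s,p,d}\int_{|y|\geq \rho/2}\frac{|y|^\nu\,\dd y}{|y|^{d+sp}}.$$
I would then split each sum in (b) according to whether $|y_\alpha|<r$ or not. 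On the inner region the weight is constant, so the sum is bounded by $r^{-d-sp}\int_{B_r}|y|^\nu\,\dd y\lesssim r^{\nu-sp}$ (taking $\nu=0$ for the plain sum). On the outer region the displayed comparison gives $O(r^{-sp})$ for the first bound (with $\rho=r$, $\nu=0$) and $O(1)$ for the second (with $\rho=1$, $\nu=0$, where $|y_\alpha|\geq 1$ automatically forces $|y_\alpha|\geq r$ under the hypothesis $r<1$); while for the third, after restricting to $|y_\alpha|<1$, it reduces the matter to $\int_{r/2}^{3/2}\rho^{\nu-sp-1}\,\dd\rho$.

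The proof then closes with the trichotomy $\nu<sp$, $\nu=sp$, $\nu>sp$: this last radial integral is of order $r^{\nu-sp}$, $|\log r|$, or $1$ respectively, matching the three cases of $S_\nu(r)$. The main (and only) obstacle is the bookkeeping at the borderline $\nu=sp$, where one must check that the $r^{\nu-sp}=1$ contribution from the inner sum is absorbed by the $|\log r|$ factor coming from the outer one, so that $S_{sp}(r)=|\log r|$ indeed dominates the total; in all other cases the two contributions are of the same order and combine to $S_\nu(r)$ up to constants.
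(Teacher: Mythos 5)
Your proposal is correct and follows essentially the same route as the paper: part (a) by inspection of the definitions, and part (b) by splitting the sum at $|y_\alpha|<r$ versus $|y_\alpha|\geq r$, bounding the inner part by the constant weight times $|B_{r+\sqrt{d}h}|\lesssim r^d$, and comparing the outer Riemann sum to the corresponding radial integral over $\R^d\setminus B_{r/2}$ (resp.\ $B_{3/2}\setminus B_{r/2}$), closing with the trichotomy $\nu\lessgtr sp$. The one small streamlining you introduce is to treat $W_{\alpha,1}$ and $W_{\alpha,2}$ simultaneously via $W_{\alpha,1}\asymp W_{\alpha,2}$ (using $|y|\asymp|y_\alpha|$ on $Q_\alpha$ for $|y_\alpha|\geq r$ under the CFL-type mesh assumption), whereas the paper carries out the $W_{\alpha,1}$ case and remarks that $W_{\alpha,2}$ is similar; this is a harmless presentational change, not a different argument.
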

 \begin{proof}
 Part \eqref{lem:propwe-item1} follows by construction. We prove part \eqref{lem:propwe-item2} for the weights corresponding to $W_{\alpha,1}$, i.e.,
\begin{equation*}
\omega_{\alpha}=  \left\{ \begin{split}
\frac{(p+d)}{p(1-s)} \frac{h^d}{ r^{d+sp}} \quad &\textup{if} \quad |y_\alpha|<|r|\\
\int_{ Q_\alpha}\frac{\dd y}{|y|^{d+sp}} \quad  &\textup{if} \quad |y_\alpha|\geq|r|.
\end{split}\right.
\end{equation*}
The case $W_{\alpha,2}$ follows similarly. Recall the estimate
\[
\sum_{y_\alpha \in B_r}h^{d} \leq |B_{r+\sqrt{d}h}|\leq C r^{d}.
\]
Then, 
\[
\sum_{\alpha\not=0} \omega_\alpha \leq \frac{C}{r^{d+sp}} \sum_{0<|y_\alpha|<r} h^d + \sum_{|y_\alpha|\geq r} \int_{ Q_\alpha}\frac{\dd y}{|y|^{d+sp}} \leq \frac{C}{r^{d+sp}}  |B_{r+\sqrt{d}h}| + \int_{\R^d\setminus B_{r/2}}\frac{\dd y}{|y|^{d+sp}} \leq C r^{-sp}.
\]
Moreover,
\[
\sum_{|y_\alpha|\geq 1} \omega_\alpha = \sum_{|y_\alpha|\geq 1} \int_{ Q_\alpha}\frac{\dd y}{|y|^{d+sp}} \leq \int_{\R^d\setminus B_{1/2}}\frac{\dd y}{|y|^{d+sp}}\leq C.
\]
To prove the last part, we first note that
\[
\sum_{0<|y_\alpha|<r}|y_\alpha|^{\nu} \omega_\alpha= \frac{C}{r^{d+sp}} \sum_{0<|y_\alpha|<r}|y_\alpha|^{\nu} h^d \leq  \frac{C}{r^{d+sp-\nu}}  |B_{r+\sqrt{d}h}| \leq C r^{-sp+\nu}.
\]
On the other hand, we can use the fact that there exists a constant $C$ such that $|y_\alpha|\leq C|y|$ for all $y\in Q_\alpha$ to get
\[
\sum_{r\leq|y_\alpha|<1}|y_\alpha|^{\nu} \omega_\alpha\leq C\sum_{r\leq|y_\alpha|<1}\int_{Q_\alpha}\frac{\dd y}{|y|^{d+sp-\nu}} \leq C\int_{B_1\setminus B_{r/2}} \frac{\dd y}{|y|^{d+sp-\nu}}.
\]
The conclusion follows.
 \end{proof}

The next result accounts for the existence and uniqueness of solutions to \eqref{scheme}.
\begin{lemma}Let $h,\tau,r\in(0,1)$ and $p>2$. Assume \eqref{as:weights} and \eqref{as:data}. Then there exists a unique solution $U \in l^{\infty}(\mathcal{G}_h \times \mathcal{T}_\tau)$ of  the numerical scheme \eqref{scheme}.
\end{lemma}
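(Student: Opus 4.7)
The plan is to exploit the fact that the scheme is purely explicit: once $U^{j-1}$ is known on the whole grid, $U^{j}$ is determined pointwise by the formula $U^{j}_\alpha = U^{j-1}_\alpha + \tau(\LL_h U^{j-1}_\alpha + f_\alpha)$. Uniqueness is therefore immediate by induction on $j$, starting from the fixed initial datum $U^{0}_\alpha = (u_0)_\alpha$. The only real content is to verify that $\LL_h U^{j-1}$ is well-defined on $\mathcal{G}_h$ and that the resulting sequence lies in $l^{\infty}(\mathcal{G}_h\times\mathcal{T}_\tau)$.

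First I would show that the operator $\LL_h$ maps $l^{\infty}(\mathcal{G}_h)$ into $l^{\infty}(\mathcal{G}_h)$. Given $V\in l^{\infty}(\mathcal{G}_h)$, for any $x\in \mathcal{G}_h$,
\begin{equation*}
|\LL_h V(x)| \leq \sum_{\alpha\neq 0} |V(x+y_\alpha)-V(x)|^{p-1}\omega_\alpha \leq (2\|V\|_{l^{\infty}(\mathcal{G}_h)})^{p-1}\sum_{\alpha\neq 0}\omega_\alpha.
\end{equation*}
By Lemma \ref{lem:propwe}\eqref{lem:propwe-item2}, the series $\sum_{\alpha\neq 0}\omega_\alpha$ is bounded by $C_{s,p,d}/r^{sp}<\infty$, so the sum defining $\LL_h V(x)$ converges absolutely and is uniformly bounded in $x$. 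This uses \eqref{as:weights} in an essential way and the hypothesis $h\leq \frac{\sqrt{d}}{2}r$ that is implicit from \eqref{as:cfl}.

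With this at hand, I would proceed by induction on $j\in\{0,1,\ldots,N\}$ to construct $U$ and to show $U\in l^{\infty}(\mathcal{G}_h\times\mathcal{T}_\tau)$. The base case $j=0$ follows from \eqref{as:data}, which ensures $\|U^{0}\|_{l^{\infty}(\mathcal{G}_h)}\leq \|u_0\|_{L^{\infty}(\R^d)}<\infty$. For the inductive step, assuming $M_{j-1}:=\|U^{j-1}\|_{l^{\infty}(\mathcal{G}_h)}<\infty$, the previous bound yields
\begin{equation*}
\|U^{j}\|_{l^{\infty}(\mathcal{G}_h)} \leq M_{j-1} + \tau\left(\frac{C_{s,p,d}}{r^{sp}}(2M_{j-1})^{p-1} + \|f\|_{L^{\infty}(\R^d)}\right) =: M_j < \infty,
\end{equation*}
so $U^{j}$ is a well-defined element of $l^{\infty}(\mathcal{G}_h)$, and in particular it is uniquely defined by the recurrence. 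Since $N$ is finite, we obtain $\max_{0\leq j\leq N}\|U^{j}\|_{l^{\infty}(\mathcal{G}_h)}\leq \max_{0\leq j\leq N}M_j<\infty$, which gives $U\in l^{\infty}(\mathcal{G}_h\times \mathcal{T}_\tau)$.

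I do not anticipate any real obstacle here: the crude bound $M_j$ may grow very rapidly with $j$ (since $p>2$ makes the recursion super-linear), but at this stage we only need the qualitative statement of boundedness on the finite time grid, not a uniform-in-$(h,\tau)$ estimate. Sharp, uniform $l^{\infty}$ bounds compatible with the CFL condition \eqref{as:cfl} are a separate matter, which will be addressed later via the monotonicity of the scheme (positivity and symmetry of the weights in Lemma \ref{lem:propwe}\eqref{lem:propwe-item1}) and a discrete maximum principle; those are the ingredients that will also yield the H\"older control used in Lemma \ref{lem: holder comb} and ultimately in Theorem \ref{Solutionparabolic prob}.
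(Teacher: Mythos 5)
Your proof is correct and follows essentially the same route as the paper's: bound $|\LL_h\psi|\leq(2\|\psi\|_{l^\infty})^{p-1}\sum_{\alpha\neq 0}\omega_\alpha\leq C r^{-sp}(2\|\psi\|_{l^\infty})^{p-1}$ via Lemma \ref{lem:propwe}, then induct on $j$ to produce $U^j\in l^\infty(\mathcal{G}_h)$, with uniqueness automatic from the explicit recursion. Your remark that the hypothesis $h\leq\frac{\sqrt d}{2}r$ (needed for Lemma \ref{lem:propwe}) is not among the stated assumptions of this lemma is a fair observation, though for mere well-definedness the finiteness of $\sum_{\alpha\neq0}\omega_\alpha$ already follows from \eqref{as:weights} without the sharp $Cr^{-sp}$ bound, so the conclusion stands in any case.
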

\begin{proof}
Using assumption \eqref{as:weights}  and by Lemma \ref{lem:propwe}, we have  the next bound for any $\psi \in l^{\infty}(\mathcal{G}_h)$:
\begin{equation}\label{shceme1}
\begin{split}
|\LL_h\psi(x_\alpha)| \leq \|\psi\|_{l^{\infty}(\Grid)}^{p-1} \sum_{\alpha\not=0} \omega_\alpha \leq  \frac{C}{r^{sp}} \|\psi\|_{l^{\infty}(\Grid)}^{p-1}.\end{split}
\end{equation}
Since $u_0$ is continuous and bounded, then $U^{0} \in l^{\infty}(\mathcal{G}_h)$.
 Applying \eqref{shceme1} we can solve problem \eqref{scheme} to find $U^1\in l^{\infty}(\mathcal{G}_h)$. Iterating, we can solve the problem and get $U^j\in l^{\infty}(\mathcal{G}_h)$ for any $j\in \N$. Uniqueness follows by construction.
 \end{proof}

 For further reference, we point out the following relation concerning assumption \eqref{as:cfl}: 
  \begin{equation}\label{relation cfl tau}
{\tau  S_{a(p-2)}(r)\leq K_{s,p,d}}.
 \end{equation}
 where $K_{s,p,d}$ is given by \eqref{eq:constantCFL}. 
In the next lemma, we state the uniform boundedness and uniform continuity of $U^{j}$ in space. 

\begin{lemma}\label{lem:equi}Let $h,\tau,r\in(0,1)$ and $p>2$. Assume \eqref{as:weights}, \eqref{as:data} and \eqref{as:cfl}. Let $U$ be the solution of \eqref{scheme}. Then, for every $j=0,..., N$, we have
\begin{equation}\label{boundedness space}
\sup_{\alpha \in \Z^d}|U_\alpha^{j}| \leq \|u_0\|_{L^\infty(\R^d)} + t_j\|f\|_{L^\infty(\R^d)},
\end{equation}
and
\begin{equation}\label{scheme2}
|U_\alpha^{j}-U_\gamma^{j}| \leq \Lambda_{u_0}(|x_\alpha-x_\gamma|)+ t_j \Lambda_f(|x_\alpha-x_\gamma|)\quad \textup{for all} \quad \alpha,\gamma \in \Z^d.
\end{equation}

\end{lemma}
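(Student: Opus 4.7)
The plan is to establish \eqref{boundedness space} and \eqref{scheme2} by a simultaneous induction on $j\in\{0,\dots,N\}$, exploiting the fact that, under \eqref{as:cfl}, the explicit scheme \eqref{scheme} can be recast as a non-negative linear combination plus a forcing term. The base case $j=0$ is immediate from $U_\alpha^0 = u_0(x_\alpha)$ together with \eqref{as:data}. For the inductive step, my first move is to rewrite $\LL_h$ in quasilinear form using
\[
\LL_h U_\alpha^{j-1} = \sum_{\beta\neq 0} g_{\alpha,\beta}^{j-1}\,\omega_\beta\,(U_{\alpha+\beta}^{j-1} - U_\alpha^{j-1}),\qquad g_{\alpha,\beta}^{j-1} := |U_{\alpha+\beta}^{j-1} - U_\alpha^{j-1}|^{p-2}\geq 0,
\]
so that \eqref{scheme} becomes
\[
U_\alpha^j = (1 - \tau\Sigma_\alpha^{j-1})\,U_\alpha^{j-1} + \tau\sum_{\beta\neq 0}g_{\alpha,\beta}^{j-1}\omega_\beta\, U_{\alpha+\beta}^{j-1} + \tau f_\alpha, \qquad \Sigma_\alpha^{j-1} := \sum_{\beta\neq 0}g_{\alpha,\beta}^{j-1}\omega_\beta.
\]
Once the CFL-type bound $\tau\Sigma_\alpha^{j-1}\leq 1$ is verified, taking absolute values gives $|U_\alpha^j|\leq \|U^{j-1}\|_{l^\infty(\Grid)} + \tau\|f\|_{L^\infty(\R^d)}$, so \eqref{boundedness space} at $t_j$ follows from the inductive hypothesis at $t_{j-1}$.

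For the spatial Hölder bound \eqref{scheme2} I will use a translation-invariance trick: fix $\eta := \gamma-\alpha$ and compare $U$ with its shift $V_\beta^j := U_{\beta+\eta}^j$, which, by translation invariance of $\LL_h$, satisfies \eqref{scheme} with initial data $u_0(x_\beta+y_\eta)$ and forcing $f(x_\beta+y_\eta)$. Applying the mean value theorem to $J_p$ yields
\[
J_p(U_{\beta+\mu}^{j-1}-U_\beta^{j-1}) - J_p(V_{\beta+\mu}^{j-1}-V_\beta^{j-1}) = (p-1)|\xi_{\beta,\mu}^{j-1}|^{p-2}\bigl(W_{\beta+\mu}^{j-1} - W_\beta^{j-1}\bigr),
\]
where $W := U-V$ and $\xi_{\beta,\mu}^{j-1}$ lies between $U_{\beta+\mu}^{j-1}-U_\beta^{j-1}$ and $V_{\beta+\mu}^{j-1}-V_\beta^{j-1}$. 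Thus $W$ solves an update of the same quasilinear form as above, with forcing $f(x_\beta)-f(x_\beta+y_\eta)$. Provided the analogous sum $\tilde\Sigma_\beta^{j-1}$ satisfies $\tau\tilde\Sigma_\beta^{j-1}\leq 1$, the same maximum principle yields $\|W^j\|_{l^\infty(\Grid)} \leq \|W^{j-1}\|_{l^\infty(\Grid)} + \tau\Lambda_f(|y_\eta|)$; iterating from $\|W^0\|_{l^\infty(\Grid)}\leq\Lambda_{u_0}(|y_\eta|)$ and setting $\beta=\alpha$ delivers \eqref{scheme2}.

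The main technical obstacle, and the reason \eqref{as:cfl} takes its precise form, is the CFL verification for $\Sigma_\alpha^{j-1}$ (and its analogue for $W$). Using the inductive Hölder bound with $L := L_{u_0}+TL_f$, we have $|U_{\alpha+\beta}^{j-1}-U_\alpha^{j-1}|\leq L|y_\beta|^a$ for $|y_\beta|<1$ and the uniform bound $\leq 2L$ for $|y_\beta|\geq 1$. Together with Lemma \ref{lem:propwe}\eqref{lem:propwe-item2} this produces
\[
\Sigma_\alpha^{j-1} \leq L^{p-2}\!\!\!\sum_{0<|y_\beta|<1}\!\!|y_\beta|^{a(p-2)}\omega_\beta \,+\, (2L)^{p-2}\!\!\!\sum_{|y_\beta|\geq 1}\!\!\omega_\beta \leq C_{s,p,d}\,(2L)^{p-2}\bigl(S_{a(p-2)}(r)+1\bigr),
\]
and combining this with relation \eqref{relation cfl tau} and the choice of $K_{s,p,d}$ in \eqref{eq:constantCFL}, whose factors $(p-1)$, $2^p$ and $(L_{u_0}+TL_f+3\tilde K_2+1)^{p-2}$ are tailored to absorb the mean value constant appearing in the $W$ equation and the contribution $\tilde K_2$ from Lemma \ref{lem: holder comb}, closes the CFL verification. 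The two cases in \eqref{as:cfl} then correspond to the two regimes $a(p-2)<sp$ and $a(p-2)\geq sp$ of $S_{a(p-2)}(r)$ (plus a small logarithmic buffer near the transition).
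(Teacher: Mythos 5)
Your proposal is correct and follows essentially the same route as the paper: a simultaneous induction on $j$, recasting the explicit update as a non-negative convex combination once the CFL condition is verified, with the verification itself relying on Lemma \ref{lem:propwe}\eqref{lem:propwe-item2} and the inductive H\"older bound. The only presentational difference is in the second half: the paper directly computes $U_\alpha^{j+1}-U_\gamma^{j+1}$ and applies the mean value theorem to $J_p(U_{\alpha+\beta}^j-U_\alpha^j)-J_p(U_{\gamma+\beta}^j-U_\gamma^j)$, whereas you phrase the same calculation as a comparison between $U$ and its spatial shift $V_\beta := U_{\beta+\eta}$ (using translation invariance of $\LL_h$) and then run a discrete maximum principle for $W=U-V$; the resulting identities and CFL requirements are identical, so this is a cosmetic reframing rather than a different method.
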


\begin{proof}
The boundedness and the H\"{o}lder continuity of $u_0$ imply  \eqref{boundedness space} and \eqref{scheme2} for $j=0$. Suppose that \eqref{boundedness space}  and \eqref{scheme2} hold for some $j$. Then,

\begin{equation}\label{scheme7}
\begin{split}
U_\alpha^{j+1} &= U_\alpha^{j}+\tau \sum_{\beta \not=0}J_p(U_{\alpha+\beta}^{j}-U_{\alpha}^{j})\omega_\beta+ \tau f_\alpha \\ &  = U_\alpha^{j}\left( 1-\tau  \sum_{\beta \not=0} |U_{\alpha+\beta}^{j}-U_{\alpha}^{j}|^{p-2}\omega_\beta \right) + \tau  \sum_{\beta \not=0} |U_{\alpha+\beta}^{j}-U_{\alpha}^{j}|^{p-2}U_{\alpha+\beta}^j\omega_\beta+ \tau f_\alpha. 
\end{split}
\end{equation}
Note that, by the induction hypothesis and  Lemma \ref{lem:propwe}, we have
\begin{equation}\label{bound_yalpha}
\begin{split}
\sum_{\beta \not=0} |U_{\alpha+\beta}^{j}-U_{\alpha}^{j}|^{p-2}\omega_\beta &=\sum_{0<|y_\alpha|<1} |U_{\alpha+\beta}^{j}-U_{\alpha}^{j}|^{p-2}\omega_\beta + \sum_{|y_\alpha|\geq 1} |U_{\alpha+\beta}^{j}-U_{\alpha}^{j}|^{p-2}\omega_\beta\\
& \leq 2^{p-2} (L_{u_0}+ t_j L_f)^{p-2} \left(\sum_{0<|y_\alpha|<1}|y_\beta|^{a(p-2)}\omega_\beta+ \sum_{|y_\alpha|\geq 1} \omega_\beta\right) \\
&\leq 2^{p-1}C_{s, p,d}(L_{u_0}+ T L_f)^{p-2}S_{a(p-2)}(r),
\end{split}
\end{equation}
where $C_{s, p, d}$ comes from Lemma \ref{lem:propwe}. Then, by \eqref{as:cfl} and \eqref{relation cfl tau}, 
\[
 1-\tau  \sum_{\beta \not=0} |U_{\alpha+\beta}^{j}-U_{\alpha}^{j}|^{p-2}\omega_\beta\geq0.
\]
In this way, from \eqref{scheme7}, we have
\[
\begin{split}
\sup_{\alpha\in \Z^d}|U_\alpha^{j+1}| &\leq \sup_{\alpha\in \Z^d}|U_\alpha^{j}|\left( 1-\tau  \sum_{\alpha\not=0} |U_{\alpha+\beta}^{j}-U_{\alpha}^{j}|^{p-2}\omega_\beta \right) + \tau  \sup_{\alpha\in \Z^d}|U_\alpha^{j}|  \sum_{\alpha\not=0} |U_{\alpha+\beta}^{j}-U_{\alpha}^{j}|^{p-2}\omega_\beta+ \tau \sup_{\alpha\in \Z^d}|f_\alpha |\\
&=\sup_{\alpha\in \Z^d}|U_\alpha^{j}| + \tau \sup_{\alpha\in \Z^d}|f_\alpha |\\
&\leq \|u_0\|_{L^\infty(\R^d)} + t_{j+1}\sup_{\alpha\in \Z^d}|f_\alpha |,
\end{split}
\]
which shows  \eqref{boundedness space} for $j+1$.

Let us prove \eqref{scheme2} for $j+1$.
Now, 
\begin{equation*}
\begin{split}
U_\alpha^{j+1}-U_\gamma^{j+1}= U_\alpha^{j}-U_\gamma^{j} + \tau \left[\LL_hU_\alpha^{j}-\LL_hU_\gamma^{j} \right]+\tau (f_\alpha-f_\gamma).
\end{split}
\end{equation*}
Observe that
\begin{equation*}
\begin{split}
\LL_hU_\alpha^{j}-\LL_hU_\gamma^{j}   &= \sum_{\beta\not=0}\left(J_p(U_{\alpha+\beta}^{j}-U_\alpha^{j}) -J_p(U_{\gamma+\beta}^{j}-U_\gamma^{j})  \right)\omega_\beta \\ 
&   =(p-1) \sum_{\beta\not=0}|\eta_\beta|^{p-2}( U_{\alpha+\beta}^{j}-U_{\gamma+\beta}^{j}) \omega_\beta- \left( U_{\alpha}^{j}-U_{\gamma}^{j}\right) (p-1)\sum_{\beta\not=0}|\eta_\beta|^{p-2}\omega_{\beta},
\end{split}
\end{equation*}
where $\eta_\beta$ lies between $U_{\alpha+\beta}^{j}-U_\alpha^{j}$ and $U_{\gamma+\beta}^{j}-U_\gamma^{j}$. Thus, proceeding as in \eqref{bound_yalpha},
\begin{equation*}
\begin{split}
 & \sum_{\beta\not=0}|\eta_\beta|^{p-2}\omega_{\beta}  \leq 2^{p-1}C_{s, p,d} (L_{u_0}+ T L_f)^{p-2}S_{a(p-2)}(r).
 \end{split}
\end{equation*}
Therefore, by \eqref{as:cfl} and \eqref{relation cfl tau}, and the induction hypothesis, we get

\begin{equation*}
\begin{split}
|U_\alpha^{j+1}-U_\gamma^{j+1}| &\leq |U_\alpha^{j}-U_\gamma^{j}|\left(1-\tau(p-1)\sum_{\beta\not=0}|\eta_\beta|^{p-2}\omega_{\beta}\right) \\ & \quad+ \tau(p-1) \sup_{\beta\in \Z^d}|U_{\alpha+\beta}^{j}-U_{\gamma+\beta}^{j}| \sum_{\beta\not=0}|\eta_\beta|^{p-2} \omega_\beta   +\tau|f_\alpha-f_\gamma| \\ 
&\leq (\Lambda_{u_0}(|x_\alpha-x_\gamma|) + t_j\Lambda_{f}(|x_\alpha-x_\gamma|)) \left(1-\tau(p-1)\sum_{\beta\not=0}|\eta_\beta|^{p-2}\omega_{\beta}\right) \\
&\quad + \tau(p-1) (\Lambda_{u_0}(|x_\alpha-x_\gamma|) + t_j\Lambda_{f}(|x_\alpha-x_\gamma|))  \sum_{\beta\not=0}|\eta_\beta|^{p-2} \omega_\beta + \tau\Lambda_f(|f_\alpha-f_\gamma|)\\
& =\Lambda_{u_0}(|x_\alpha-x_\gamma|) + t_{j+1}\Lambda_{f}(|x_\alpha-x_\gamma|),
\end{split}
\end{equation*}
which concludes the proof.
\end{proof}

We extend the scheme \eqref{scheme} continuously in space as follows:
\begin{equation}\label{scheme8}
\begin{cases} U^{j}(x)=U^{j-1}(x)+\tau \left(\LL_h U^{j-1}(x) +f(x)\right),  \quad x \in \mathbb{R}^{d}, j=1, ..., N,\\ 
U^{0}(x)=u_0(x), \quad  x \in \mathbb{R}^{d}.
\end{cases}
\end{equation}In the next result, we prove continuous dependence  of solutions to \eqref{scheme8} with respect to the initial data. 
\begin{lemma}\label{Equi 1}
Let $h,\tau,r\in(0,1)$ and $p>2$. Assume \eqref{as:weights}  and \eqref{as:cfl}.  Let $U$, $\tilde{U}$ be the solutions of \eqref{scheme8} corresponding to data $u_0, \tilde{u}_0$ and $f, \tilde{f}$, respectively, and satisfying \eqref{as:data}. Then, 
\begin{equation*}
\|U^{j}-\tilde{U}^{j}\|_{L^{\infty}(\R^d)} \leq \|u_0-\tilde{u}_0\|_{L^{\infty}(\R^d)}+ t_j\|f-\tilde{f}\|_{L^{\infty}(\R^d)},
\end{equation*}for $j=0, ..., N$. 
\end{lemma}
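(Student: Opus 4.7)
The plan is to argue by induction on $j$, mimicking the linearization argument used in the proof of Lemma \ref{lem:equi}. The case $j=0$ holds trivially since $U^{0}=u_0$ and $\tilde{U}^{0}=\tilde{u}_0$. For the inductive step, I would assume the estimate at step $j$ and write, directly from \eqref{scheme8},
\[
U^{j+1}(x)-\tilde{U}^{j+1}(x)=(U^{j}(x)-\tilde{U}^{j}(x))+\tau\bigl[\LL_h U^{j}(x)-\LL_h \tilde{U}^{j}(x)\bigr]+\tau(f(x)-\tilde{f}(x)).
\]

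The central step is to linearize the nonlinear increment. Since $J_p$ is $C^{1}$ for $p>2$, the mean value theorem gives, for each $\beta\neq 0$, a point $\eta_\beta=\eta_\beta(x)$ lying between $U^{j}(x+y_\beta)-U^{j}(x)$ and $\tilde{U}^{j}(x+y_\beta)-\tilde{U}^{j}(x)$ such that
\[
J_p(U^{j}(x+y_\beta)-U^{j}(x))-J_p(\tilde{U}^{j}(x+y_\beta)-\tilde{U}^{j}(x))=(p-1)|\eta_\beta|^{p-2}\bigl[(U^{j}-\tilde{U}^{j})(x+y_\beta)-(U^{j}-\tilde{U}^{j})(x)\bigr].
\]
Summing against the positive weights $\omega_\beta$ from \eqref{as:weights} and substituting, I obtain
\[
\begin{split}
U^{j+1}(x)-\tilde{U}^{j+1}(x)&=(U^{j}(x)-\tilde{U}^{j}(x))\Bigl[1-\tau(p-1)\sum_{\beta\neq 0}|\eta_\beta|^{p-2}\omega_\beta\Bigr]\\
&\quad+\tau(p-1)\sum_{\beta\neq 0}|\eta_\beta|^{p-2}(U^{j}-\tilde{U}^{j})(x+y_\beta)\,\omega_\beta+\tau(f(x)-\tilde{f}(x)).
\end{split}
\]

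The key point is that the bracketed coefficient is nonnegative, so the right-hand side becomes a convex combination of translates of $U^{j}-\tilde{U}^{j}$ plus the forcing error. To verify this I would use Lemma \ref{lem:equi} (applied to both $U^{j}$ and $\tilde{U}^{j}$), which yields the pointwise bound $|\eta_\beta|\leq 2(L_{u_0}+TL_f)|y_\beta|^{a}$ on $\{|y_\beta|<1\}$ and the boundedness bound elsewhere, and then invoke Lemma \ref{lem:propwe} together with the CFL condition \eqref{as:cfl}, exactly as in the derivation leading to \eqref{bound_yalpha}. This gives
\[
\tau(p-1)\sum_{\beta\neq 0}|\eta_\beta|^{p-2}\omega_\beta\leq \tau(p-1)\,2^{p-1}C_{s,p,d}(L_{u_0}+TL_f)^{p-2}S_{a(p-2)}(r)\leq 1
\]
by the definition of $K_{s,p,d}$ in \eqref{eq:constantCFL} and the relation \eqref{relation cfl tau}.

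Once nonnegativity is established, taking absolute values, bounding each translate by $\|U^{j}-\tilde{U}^{j}\|_{L^{\infty}(\R^d)}$, and collecting terms causes the two sums involving $|\eta_\beta|^{p-2}\omega_\beta$ to cancel, leaving
\[
\|U^{j+1}-\tilde{U}^{j+1}\|_{L^{\infty}(\R^d)}\leq \|U^{j}-\tilde{U}^{j}\|_{L^{\infty}(\R^d)}+\tau\|f-\tilde{f}\|_{L^{\infty}(\R^d)},
\]
and the inductive hypothesis together with $t_j+\tau=t_{j+1}$ closes the argument. The only real obstacle is the bookkeeping needed to ensure the CFL is simultaneously valid for the two solutions; this is handled by interpreting $L_{u_0},L_f$ as upper bounds for the corresponding quantities of both pairs of data (or, equivalently, by applying the proof with $\max\{L_{u_0},L_{\tilde{u}_0}\}$ and $\max\{L_f,L_{\tilde{f}}\}$ in \eqref{eq:constantCFL}), after which the algebra proceeds exactly as in Lemma \ref{lem:equi}.
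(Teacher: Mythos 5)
Your proof is correct and is precisely the argument the paper intends: the paper itself dispatches this lemma with ``The proof follows exactly as in Lemma \ref{lem:equi},'' meaning the same mean-value linearization of $J_p$, nonnegativity of the resulting convex-combination coefficient under \eqref{as:cfl} via Lemma~\ref{lem:propwe} and \eqref{relation cfl tau}, and the telescoping of the forcing term. Your closing remark about reading $L_{u_0},L_f$ as bounds valid for both data sets is a worthwhile clarification the paper leaves implicit, and it is automatic in the one place the lemma is used (Lemma~\ref{lem: holder comb}, where $\tilde u_0$ is a mollification of $u_0$ and $\tilde f=f$).
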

The proof follows exactly as in Lemma \ref{lem:equi}.
In the next result, we state two bounds for the variation in time of $U^{j}$: the first accounts for the equicontinuity in time of $U^{j}$ and the second will be used to state that the limiting profiles of $U^{j}$ are viscosity solutions of \eqref{parabolic problem}. 
\begin{lemma}\label{lem: holder comb}
Let $h,\tau,r\in(0,1)$ and $p>2$. Assume \eqref{as:weights}, \eqref{as:data} and \eqref{as:cfl}. Let $U$ be the solution of \eqref{scheme8}. Then, there exists a constant $K >0$ such that for all $k, j \geq 0$,
\begin{equation}\label{Eq in time}
\|U^{j+k}-U^{j}\|_{L^{\infty}(\R^d)} \leq \overline{\Lambda}_{u_0, f}(t_k),
\end{equation}where
$$\overline{\Lambda}_{u_0, f}(t_k):=\tilde{K}_1 t_k^{\frac{a}{2+(1-a)(p-2)}}+ \|f\|_{L^{\infty}(\R^d)}t_k,$$with $\tilde{K}_1:=KL_{u_0}^{\frac{p}{2+(1-a)(p-2)}} $. Moreover, there also holds
\begin{equation}\label{Eq in time s}
\|U^{j+k}-U^{j}\|_{L^{\infty}(\R^d)} \leq \tilde{\Lambda}_{u_0, f,r}(t_k),
\end{equation}
$$\tilde{\Lambda}_{u_0, f,r}(t_k):=\tilde{K}_2 t_k S_{a(p-1)}(r)+ \|f\|_{L^{\infty}(\R^d)}t_k,$$with $\tilde{K}_2:=KL_{u_0}^{p-1}C_{s, p, d}$. 
\end{lemma}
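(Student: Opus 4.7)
The plan is to reduce both estimates to bounds on $\|U^{k} - u_0\|_{L^\infty(\R^d)}$. Applying Lemma \ref{Equi 1} to the sequences $(U^j)_j$ and $(U^{j+k})_j$, the latter being the scheme solution starting from datum $U^k$ with the same forcing $f$, immediately gives
\[
\|U^{j+k} - U^j\|_{L^\infty(\R^d)} \leq \|U^k - u_0\|_{L^\infty(\R^d)},
\]
so only the case $j=0$ requires work. For \eqref{Eq in time s} I would simply iterate the scheme to write $U^k_\alpha - u_0(x_\alpha) = \tau \sum_{i=1}^{k}\LL_h U^{i-1}(x_\alpha) + t_k f(x_\alpha)$ and plug in the uniform bound $\|\LL_h U^{i-1}\|_\infty \leq \tilde K_2 S_{a(p-1)}(r)$, which combines the spatial H\"older regularity of $U^{i-1}$ from Lemma \ref{lem:equi} with the summability $\sum_{0<|y_\beta|<1}|y_\beta|^{a(p-1)}\omega_\beta \leq C_{s,p,d} S_{a(p-1)}(r)$ from Lemma \ref{lem:propwe}. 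Summing over $i$ and adding $t_k\|f\|_\infty$ yields \eqref{Eq in time s}.

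For \eqref{Eq in time} the previous iteration is too crude because of the $r$-dependent factor $S_{a(p-1)}(r)$, so I would proceed by mollification and discrete comparison. Let $\delta\in(0,1)$ and set $\bar u_0^\delta := u_0 \ast \eta_\delta$ for a standard mollifier $\eta_\delta$; this satisfies $\|u_0-\bar u_0^\delta\|_\infty \leq L_{u_0}\delta^a$, $\|D\bar u_0^\delta\|_\infty \leq CL_{u_0}\delta^{a-1}$, $\|D^2\bar u_0^\delta\|_\infty \leq CL_{u_0}\delta^{a-2}$, while remaining $a$-H\"older with constant at most $L_{u_0}$. The key improvement is an $r$-independent bound on $\|\LL_h \bar u_0^\delta\|_\infty$. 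Using the symmetry $\omega_\beta=\omega_{-\beta}$ and the oddness of $J_p$ I would write $2\LL_h \bar u_0^\delta(x) = \sum_{\beta\neq 0}[J_p(A_+)+J_p(A_-)]\omega_\beta$ with $A_\pm := \bar u_0^\delta(x\pm y_\beta) - \bar u_0^\delta(x)$, and apply the mean value theorem to $J_p(A_+) - J_p(-A_-)$ together with the second-order identity $|A_+ -(-A_-)| = |\bar u_0^\delta(x+y_\beta)-2\bar u_0^\delta(x)+\bar u_0^\delta(x-y_\beta)|\leq \|D^2\bar u_0^\delta\|_\infty|y_\beta|^2$. This yields, for $|y_\beta|$ small, the cancellation-aided estimate
\[
|J_p(A_+)+J_p(A_-)| \leq (p-1)\|D\bar u_0^\delta\|_\infty^{p-2}\|D^2\bar u_0^\delta\|_\infty|y_\beta|^p.
\]
Since $p>sp$, Lemma \ref{lem:propwe} gives $\sum_{0<|y_\beta|<1}|y_\beta|^p\omega_\beta \leq C$ uniformly in $r$, and handling the tail $\{|y_\beta|\geq 1\}$ by boundedness of $\bar u_0^\delta$ produces $\|\LL_h\bar u_0^\delta\|_\infty \leq CL_{u_0}^{p-1}\delta^{a(p-1)-p}$. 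I would then set $C_\delta := \|\LL_h\bar u_0^\delta\|_\infty + \|f\|_\infty$ and verify by induction, using the exact monotonicity/linearization identity employed in the proof of Lemma \ref{lem:equi} and the CFL \eqref{as:cfl} (which remains effective because $\bar u_0^\delta$ is $a$-H\"older with constant $\leq L_{u_0}$), that $\phi^j(x) := \bar u_0^\delta(x) + L_{u_0}\delta^a + C_\delta t_j$ is a discrete supersolution dominating $U^j$, with the symmetric subsolution $\bar u_0^\delta - L_{u_0}\delta^a - C_\delta t_j$ giving the reverse inequality. Consequently
\[
\|U^k - u_0\|_\infty \leq 2L_{u_0}\delta^a + CL_{u_0}^{p-1}t_k\delta^{a(p-1)-p} + t_k\|f\|_\infty,
\]
and optimizing in $\delta$ by balancing the two $\delta$-dependent summands leads to $\delta^D = L_{u_0}^{p-2}t_k$ with $D:=2+(1-a)(p-2)$, yielding the target time exponent $a/D$ and constant $\tilde K_1 = K L_{u_0}^{p/D}$ (using the identity $1+a(p-2)/D = p/D$).

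The hard part will be obtaining the $r$-independent bound on $\|\LL_h\bar u_0^\delta\|_\infty$. The naive estimate $|J_p(A_\pm)| \lesssim \|D\bar u_0^\delta\|_\infty^{p-1}|y_\beta|^{p-1}$ would force summing $|y_\beta|^{p-1}\omega_\beta$, which diverges in $r$ precisely in the range $a(p-1)<sp$ where \eqref{Eq in time} is the nontrivial bound. The symmetric-cancellation trick upgrades the exponent of $|y_\beta|$ from $p-1$ to $p$, and the strict inequality $p>sp$ (equivalently $s<1$) is exactly what makes the resulting sum uniform in $r$ via Lemma \ref{lem:propwe}. Once this improvement is in place, the comparison argument and the optimization in $\delta$ are essentially mechanical, and the reduction via Lemma \ref{Equi 1} closes the estimate for general $j$.
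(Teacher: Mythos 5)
Your proposal captures the two essential ideas of the paper's proof — the mollification of $u_0$ combined with the second-order cancellation $J_p(A_+)+J_p(A_-)$ to produce an $r$-independent bound $\|\LL_h \bar u_0^\delta\|_\infty\lesssim L_{u_0}^{p-1}\delta^{a(p-1)-p}$, and the final optimization $\delta^D=L_{u_0}^{p-2}t_k$ with $D=2+(1-a)(p-2)$ — and your algebra for the exponent and for $\tilde K_1$ is correct. Your reduction to $j=0$ via Lemma \ref{Equi 1} (comparing the scheme started from $u_0$ with the scheme started from $U^k$) is valid, since $U^k$ is bounded and $a$-H\"older by Lemma \ref{lem:equi} and the CFL constant $K_{s,p,d}$ is calibrated for H\"older constant $L_{u_0}+TL_f$. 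The paper reaches the same point through a slicker "time-shift" use of Lemma \ref{Equi 1}: it compares $U_\delta^{j}$ and $\tilde U_\delta^{j}:=U_\delta^{j+1}$ (both scheme solutions) to get $\|U_\delta^{j+1}-U_\delta^j\|\leq\|U_\delta^1-U_\delta^0\|=\tau\|\LL_h u_{0,\delta}\|_\infty+\tau\|f\|_\infty$, and then telescopes; this replaces your explicit discrete sub/supersolution comparison $\phi^j=\bar u_0^\delta\pm(L_{u_0}\delta^a+C_\delta t_j)$ by an already-established $L^\infty$-contraction, but both arguments are sound and rest on the same monotonicity/CFL mechanism.

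Where your route genuinely deviates — and incurs a small but real cost — is estimate \eqref{Eq in time s}. You iterate the scheme and bound $\|\LL_h U^{i-1}\|_\infty$ by the H\"older regularity of the iterate $U^{i-1}$, which by Lemma \ref{lem:equi} has H\"older constant $L_{u_0}+t_{i-1}L_f$. This produces $\tilde K_2 \sim (L_{u_0}+TL_f)^{p-1}C_{s,p,d}$, not the claimed $\tilde K_2=KL_{u_0}^{p-1}C_{s,p,d}$. The paper instead applies exactly the same time-shift trick: it bounds $\|U_\delta^{j+k}-U_\delta^j\|$ by $t_k\|\LL_h u_{0,\delta}\|_\infty+t_k\|f\|_\infty$, and $\|\LL_h u_{0,\delta}\|_\infty$ depends only on the H\"older seminorm of the mollified \emph{initial datum} (which is $\leq L_{u_0}$), not on the iterates. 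This is why the paper's $\tilde K_2$ has only $L_{u_0}^{p-1}$. Since $\tilde K_2$ enters the explicit CFL constant \eqref{eq:constantCFL}, your argument would require propagating a slightly larger $\tilde K_2$ through the stability condition. That is not a circularity and not a fatal gap (a stronger CFL is still a valid CFL), but it does mean you would not recover the precise constant stated in the lemma; to match it, you should run the mollification/time-shift argument for \eqref{Eq in time s} too, just as the paper does, rather than iterating through the $U^{i-1}$.
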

In the proof of this lemma we will use the following notation: for a given function $f$, we let
\begin{equation*}
d_yf(x):= f(x+y)-f(x) \quad \text{ and }\quad D_yf(x):= f(x+y)+f(x-y)-2f(x).
\end{equation*}
\begin{proof}
We consider a mollification $u_{0, \delta}$ of $u_0$ given by convolution with the standard mollifiers $\rho_\delta$, $\delta >0$. Let $(U_\delta)^{j}$ be the solution of \eqref{scheme8} with initial data $u_{0,\delta}$. Then,
\begin{equation}\label{Eq in time 2}
\|(U_\delta)^{1}- (U_\delta)^{0}\|_{L^{\infty}(\R^d)} \leq \tau \|\LL_h u_{0, \delta}\|_{L^{\infty}(\R^d)} +\tau \|f\|_{L^{\infty}(\R^d)}. 
\end{equation}Define $\tilde{U}_\delta^{j}:= U_\delta^{j+1}$. Hence, $\tilde{U}_\delta^{j}$ solves \eqref{scheme8} with initial condition $\tilde{U}_\delta^{0}:=U_\delta^{1}$. Consequently, by Lemma \ref{Equi 1} and \eqref{Eq in time 2},
$$\|U_\delta^{j+1}-U_\delta^{j}\|_{L^{\infty}(\R^d)} \leq \tau\|\LL_h u_{0, \delta}\|_{L^{\infty}(\R^d)} +\tau \|f\|_{L^{\infty}(\R^d)}.$$More general, for any $k \geq 0$ we have
\begin{equation*}
\|U_\delta^{j+k}-U_\delta^{j}\|_{L^{\infty}(\R^d)} \leq (k\tau)\|\LL_h u_{0, \delta}\|_{L^{\infty}(\R^d)} +(k\tau) \|f\|_{L^{\infty}(\R^d)}.
\end{equation*}Also, 

\begin{equation*}
\begin{split}
|\LL_h u_{0, \rho}(x)| \leq&  \frac{1}{2}  \sum_{0<|y_\beta|<1} (J_p(d_{y_\beta}u_{0, \delta}(x))- J_p(-d_{-y_\beta}u_{0, \delta}(x)) \omega_\beta + \sum_{|y_\beta|\geq 1}J_p(d_{y_\beta}u_{0, \delta}(x))\omega_\beta\\
\leq& C\left(\sum_{0<|y_\beta|<1} \max\left\lbrace |d_{y_\beta}u_{0, \delta}(x)|, |d_{-y_\beta}u_{0, \delta}(x)| \right\rbrace^{p-2} |D_{y_\beta}u_{0, \delta}(x)| \omega_\beta + \|u_{0,\delta}\|_{L^\infty(\R^d)}^{p-1} \sum_{|y_\beta|\geq 1}\omega_\beta\right).
\end{split}
\end{equation*}
We will prove first \eqref{Eq in time}.  Appealing to the H\"{o}lder regularity of $u_0$ and the following estimates  from \cite[Appendix A]{dTLi22b}
\begin{equation*}\label{est holder s1}
|d_{\pm y_{\beta}}u_{0, \delta}(x)| \leq KL_{u_0}\delta^{a-1}|y_\beta|
\end{equation*}
and
\begin{equation*}\label{est holder s12}
|D_{y_\beta}u_{0, \delta}(x)| \leq KL_{u_0}\delta^{a-2}|y_\beta|^{2}, \quad K> 0\end{equation*}
we obtain
\begin{equation*}
\begin{split}
|\LL_h u_{0,\delta}(x)|&\leq C\left( (KL_{u_0}\delta^{a-1})^{p-2}KL_{u_0}\delta^{a-2}\sum_{0<|y_\beta|<1}|y_\beta|^{p} \omega_\beta + \|u_{0}\|_{L^\infty(\R^d)}^{p-1} \sum_{|y_\beta|\geq 1}\omega_\beta \right)\\
&\leq C\left(K^{p-1}L_{u_0}^{p-1}\delta^{(a-1)(p-2)+a-2} +\|u_{0}\|_{L^\infty(\R^d)}^{p-1} \right) \\
&\leq CL_{u_0}^{p-1}\delta^{(a-1)(p-2)+a-2}.
\end{split}
\end{equation*}
Thus,
\[
\|U_\delta^{j+k}-U_\delta^{j}\|_{L^{\infty}(\R^d)} \leq  CL_{u_0}^{p-1}\delta^{(a-1)(p-2)+a-2} t_k + \|f\|_{L^\infty(\R^d)}t_k.
\]
Moreover,
\begin{equation}\label{ineq delta}
\begin{split}
\|U^{j+k}-U^{j}\|_{L^{\infty}(\R^d)} &\leq  \|U^{j+k}-U_\delta^{j+k}\|_{L^{\infty}(\R^d)}+\|U_\delta^{j+k}-U_\delta^{j}\|_{L^{\infty}(\R^d)} + \|U^{j}-U_\delta^{j}\|_{L^{\infty}(\R^d)} \\ & \leq  2L_{u_0}\delta^{a}+ CL_{u_0}^{p-1}\delta^{(a-1)(p-2)+a-2}t_k +\|f\|_{L^{\infty}(\R^d)} t_k.
\end{split}
\end{equation}Choosing $\delta =(\frac{C}{2L^{2-p}_{u_0}}t_k)^{\frac{1}{2+(1-a)(p-2)}}$ gives \eqref{Eq in time}.
To prove \eqref{Eq in time s}, we appeal to the H\"{o}lder regularity of $u_0$  to get
\begin{equation}\label{bound frac regularization}
\begin{split}
|\LL_h u_{0,\delta}|&\leq L^{p-1}_{u_0}\left(\sum_{0<|y_\beta|<1}|y_\beta|^{a(p-1)}\omega_\beta + \|u_{0}\|_{L^\infty(\R^d)}^{p-1} \sum_{|y_\beta|\geq 1}\omega_\beta \right)\\
& \leq 2L^{p-1}_{u_0}C_{s, p, d}S_{a(p-1)}(r).
\end{split}
\end{equation}

The rest of the proof follows as before by plugging \eqref{bound frac regularization} into  \eqref{ineq delta} and taking $\delta =(L^{p-2}_{u_0}C_{s,p, d}t_k)^{1/a}$ there.
\end{proof}

By a continuous interpolation, we extend the scheme \eqref{scheme8} in time in a continuous way as follows:
\begin{equation}\label{scheme11}
U(x,t):=\frac{t_{j+1}-t}{\tau}U^{j}(x)+\dfrac{t-t_j}{\tau}U^{j+1}(x), \quad \text{if }t \in  [t_j, t_{j+1}], \,\,\text{for some }j=0,..., N.
\end{equation}Observe that for all $t \in [t_j, t_{j+1}]$, there holds
\begin{equation}\label{eq-interp}
U(x, t)= U(x, t_j)+(t-t_j)\LL_hU(x, t_j) + (t-t_j)f(x).
\end{equation}Hence, the original scheme \eqref{scheme2} is preserved at any $(x, t)\in \overline{Q_T}$. 

By the same argument as in \cite[Proposition 3.10]{dTLi22b}, it follows for the solution $U$ of  \eqref{scheme11} that:
\begin{equation*}
\|U\|_{L^{\infty}( \overline{Q_T})} \leq \|u_0\|_{L^{\infty}(\R^d)} +T\|f\|_{L^{\infty}(\R^d)}
\end{equation*}and for any $z, x \in \mathbb{R}^{d}$ and $t, \tilde{t}\in [0, T]$, we have
\begin{equation*}
|U(x, t)-U(z,  \tilde{t})| \leq \Lambda_{u_0}(|x-z|)+T\Lambda_f(|x-z|) + C\overline{\Lambda}_{u_0, f}(|\tilde{t}-t|),
\end{equation*}where $\overline{\Lambda}_{u_0, f}$ comes from Lemma \ref{lem: holder comb}. By Arzel\`{a}-Ascoli theorem, we obtain the following convergence of the numerical solutions of the scheme \eqref{eq-interp}.
\begin{corollary}Assume \eqref{as:weights}, \eqref{as:data} and \eqref{as:cfl}. Let $U_h$ be a sequence of solutions of \eqref{eq-interp}. Then, there exists a subsequence $U_{h_i}$ and a function $u \in C_b( \overline{Q_T})$ such that
$$U_{h_i}\to u  \quad \text{locally uniformly in } \overline{Q_T} \text{ as }i \to \infty.$$
\end{corollary}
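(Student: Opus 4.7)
The plan is a direct application of the Arzel\`a--Ascoli theorem on compact sets, combined with a standard diagonal argument to handle the non-compactness of $\overline{Q_T}=\R^d\times[0,T]$.

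First, I would record that all the ingredients are already available from the preceding lemmas. The uniform bound $\|U_h\|_{L^\infty(\overline{Q_T})}\leq \|u_0\|_{L^\infty(\R^d)}+T\|f\|_{L^\infty(\R^d)}$ shows that the family $\{U_h\}$ is uniformly bounded in $h$. Similarly, the space-time modulus estimate
\[
|U_h(x,t)-U_h(z,\tilde t)| \leq \Lambda_{u_0}(|x-z|)+T\Lambda_f(|x-z|) + C\,\overline{\Lambda}_{u_0,f}(|\tilde t-t|)
\]
has a right-hand side which is independent of $h$ and which tends to $0$ as $(z,\tilde t)\to (x,t)$ (since $\Lambda_{u_0}$, $\Lambda_f$ are $a$-H\"older moduli and $\overline{\Lambda}_{u_0,f}$ comes from Lemma \ref{lem: holder comb}, both vanishing at the origin). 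This gives uniform equicontinuity of $\{U_h\}$ on $\overline{Q_T}$.

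Next, I would exhaust $\overline{Q_T}$ by the compact sets $K_n:=\overline{B_n(0)}\times [0,T]$ with $n\in\N$. On $K_1$, by Arzel\`a--Ascoli, there exists a subsequence $U_{h^{(1)}_i}$ converging uniformly on $K_1$ to a continuous limit. Iteratively, on $K_n$ one extracts a subsequence $U_{h^{(n)}_i}$ of $U_{h^{(n-1)}_i}$ converging uniformly on $K_n$. The standard Cantor diagonal subsequence $U_{h_i}:=U_{h^{(i)}_i}$ then converges uniformly on every $K_n$, hence locally uniformly on $\overline{Q_T}$, to some function $u:\overline{Q_T}\to \R$.

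Finally, I would check that $u\in C_b(\overline{Q_T})$: the uniform boundedness of $U_{h_i}$ passes to the limit, giving $\|u\|_{L^\infty(\overline{Q_T})}\leq \|u_0\|_{L^\infty(\R^d)}+T\|f\|_{L^\infty(\R^d)}$, and the equicontinuity estimate passes pointwise to $u$, yielding continuity (in fact uniform continuity) on $\overline{Q_T}$. No real obstacle arises here; the only point requiring care is the diagonal extraction, which is entirely standard.
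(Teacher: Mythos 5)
Your proposal is correct and takes essentially the same approach as the paper: the paper simply invokes Arzel\`a--Ascoli after stating the uniform $L^\infty$ bound and the $h$-independent space-time modulus of continuity, and you have carefully spelled out the exhaustion-by-compacts plus diagonal extraction that the paper leaves implicit.
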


To finish the proof of Theorem \ref{Solutionparabolic prob}, we finally prove that the function $u$ is a viscosity solution. We recall this notion.
\begin{definition}\label{def:visc}
We say that a bounded and lower (resp., upper) semicontinuous function $u: \overline{Q}_T\to \mathbb{R}$ is a viscosity supersolution (resp., subsolution) of \eqref{parabolic problem} if
\begin{itemize}
\item[(a)] $u(x, 0)\geq u_0(x)$ (resp., $u(x, 0)\leq u_0(x)$), for all $x \in \mathbb{R}^{d}$;
\item[(b)] if $(x_0, t_0)\in Q_T$ and $\varphi \in C_b^{4}(B_R(x_0)\times (t_0-R, t_0+ R))\cap L^\infty(\overline{Q}_T)$, for some $R >0$ such that $\varphi(x_0, t_0)=u(x_0, t_0)$ and $\varphi(x, t) \leq u(x, t)$ (resp., $\varphi(x, t) \geq u(x, t)$ )  for all $(x, t) \in B_R(x_0)\times (t_0-R, t_0)$, then $$\partial_t \varphi(x_0, t_0) +(-\Delta)_p^{s}\varphi(x_0, t_0) \geq f(x_0) \quad (\text{resp., } \partial_t \varphi(x_0, t_0) +(-\Delta)_p^{s}\varphi(x_0, t_0) \leq f(x_0)).$$
\end{itemize}Finally, a viscosity solution is both, a viscosity supersolution and a viscosity subsolution.
\end{definition}

\begin{proof}[Proof of Theorem \ref{Solutionparabolic prob}] We follow the lines  of the proof of \cite[Theorem 4.1]{dTLi22b}. We start proving that $u$ is a viscosity supersolution. The proof that $u$ is a viscosity subsolution is similar. 

Let $\varphi$ be a smooth test function such that for some $(x^{*}, t^{*})$ there holds $u(x^{*}, t^{*})=\varphi(x^{*}, t^{*})$ and $u(x,t)> \varphi(x,t)$ for all $(x, t)\in B_R(x^{*})\times (t^{*}-R, t^{*}]$, $(x, t)\neq (x^{*}, t^{*})$.  The local uniform convergence of $U_h$ to $u$ implies that there is a sequence $(x^{h}, t^{h})$ converging to $(x^{*}, t^{*})$ such that
$$M_h:=\varphi(x^{h}, t^{h}) -U_h(x^{h}, t^{h}) \geq \varphi(x, t)-U_h(x, t) \quad \text{ for all }(x, t)\in B_R(x^{h}) \times (t^{h}-R, t^{h}].$$
By \eqref{eq-interp}, for $t_j  \in \mathcal{T}_\tau$,
\begin{equation}\label{eq-interph}
U_h(x^{h}, t^{h})= U_h(x^{h}, t_j)+(t^{h}-t_j)\sum_{\alpha \neq 0}J_p(U_h(x^{h}+ y_\alpha, t_j)-U_h(x^{h}, t_j))\omega_\alpha +(t^{h}-t_j)f(x^{h}).
\end{equation}
Define $\tilde{U}_h:=U_h+M_h$. Then $\tilde{U}_h$ also satisfies the scheme \eqref{eq-interph} and then, since $\tilde{U}_h(x^{h}, t^{h})= \varphi(x^{h}, t^{h})$, we get
\begin{equation*}
\varphi(x^{h}, t^{h}) = \tilde{U}_h(x^{h}, t_j) +(t^{h}-t_j)\sum_{\alpha \neq 0}J_p(\tilde{U}_h(x^{h}+ y_\alpha, t_j)-\tilde{U} _h(x^{h}, t_j))\omega_\alpha +(t^{h}-t_j)f(x^{h}).
\end{equation*}
Moreover, $\tilde{U}_h\geq \varphi$. 
Next, we will prove that we can replace $\tilde{U}_h$ by $\varphi$ in the above expression to get
\begin{equation}\label{varphi-visc}
\varphi(x^{h}, t^{h}) \geq \varphi(x^{h}, t_j) +(t^{h}-t_j)\sum_{\alpha \neq 0}J_p(\varphi(x^{h}+ y_\alpha, t_j)-\varphi _h(x^{h}, t_j))\omega_\alpha +(t^{h}-t_j)f(x^{h}).
\end{equation}
Then,  taking the limit as $h, \tau \to 0$ in \eqref{varphi-visc} and Theorem \ref{DiscretOrder} will end the proof. Thus, we introduce the function
$$g(\xi):=\xi +(t^{h}-t_j)\sum_{\alpha \neq 0}J_p(\tilde{U}_h(x^{h}+y_\alpha, t_j)-\xi)\omega_\alpha$$
and show that $g$ is non-decreasing. We will check that $g'(\xi) \geq 0$ for $ \xi \in [\varphi(x^{h}, t_{j}), \tilde{U}_h(x^{h}, t_j)].$ Observe that 
$$g'(\xi)= 1-(t^{h}-t_j)(p-1)\sum_{\alpha \neq 0}|\tilde{U}_h(x^{h}+y_\alpha, t_j)-\xi|^{p-2}\omega_\alpha.$$
Next,  for $ \xi \in [\varphi(x^{h}, t_{j}), \tilde{U}_h(x^{h}, t_j)]$ we have that

\begin{equation}\label{est visc 1}
\begin{split}
|\tilde{U}_h(x^{h}+y_\alpha, t_j)-\xi| &\leq |\tilde{U}_h(x^{h}+y_\alpha, t_j)-\tilde{U}_h(x^{h}, t_j)|+ |\tilde{U}_h(x^{h}, t_j)- \xi| \\ & \leq  |\tilde{U}_h(x^{h}+y_\alpha, t_j)-\tilde{U}_h(x^{h}, t_j)|+ |\tilde{U}_h(x^{h}, t_j)- \varphi(x^{h}, t_{j})|\\ & \leq |\tilde{U}_h(x^{h}+y_\alpha, t_j)-\tilde{U}_h(x^{h}, t_j)|+ |U_h(x^{h}, t_j)-U_h(x^{h}, t_h)|+ |\varphi(x^{h}, t^{h})-\varphi(x^{h}, t_j)| \\ & \leq \Lambda_{u_0}(|y_\alpha|)+T\Lambda_f(|y_\alpha|) +3\tilde{\Lambda}_{u_0, f,r}(\tau)+\tau \|\partial_t \varphi\|_{L^{\infty}(B_R(x^{h})\times [ t^{*}-R, t^{*}+R])},
\end{split}
\end{equation}where $\tilde{\Lambda}_{u_0, f,r}$ is from Lemma \ref{lem: holder comb} and we have used that  $|t^{h}-t_j|\leq \tau$. We now split the proof in two parts. If $a<sp/(p-1)$, i.e. $sp-a(p-1)=sp-a(p-2)-a>0$, then, by \eqref{as:cfl},
\begin{equation}\label{est visc 2}
\begin{split}
& 3\tilde{\Lambda}_{u_0, f,r}(\tau)+\tau \|\partial_t \varphi\|_{L^{\infty}(B_R(x^{h})\times [t^{*}-R, t^{*}+R])} \\ & \qquad  \leq \tau (3\tilde{K}_2 S_{a(p-1)}(r)  + 3\|f\|_{L^{\infty}(\R^d)}+ \|\partial_t \varphi\|_{L^{\infty}(B_R(x^{h})\times [t^{*}-R, t^{*}+R])})\\ 
&\qquad \leq r^{a} K_{s,p,d}  r^{sp-a(p-1)} (3\tilde{K}_2 S_{a(p-1)}(r)  + 3\|f\|_{L^{\infty}(\R^d)}+ \|\partial_t \varphi\|_{L^{\infty}(B_R(x^{h})\times [t^{*}-R, t^{*}+R])})\\
& \qquad \leq r^{a} \left({3\tilde{K}_2+1}\right).
\end{split}
\end{equation}
On the other hand, if $a\geq sp/(p-1)$, then
\begin{equation}\label{est visc 2b}
\begin{split}
& 3\tilde{\Lambda}_{u_0, f,r}(\tau)+\tau \|\partial_t \varphi\|_{L^{\infty}(B_R(x^{h})\times [t^{*}-R, t^{*}+R])} \\ & \qquad  \leq \tau (3\tilde{K}_2 S_{a(p-1)}(r)  + 3\|f\|_{L^{\infty}(\R^d)}+ \|\partial_t \varphi\|_{L^{\infty}(B_R(x^{h})\times [t^{*}-R, t^{*}+R])})\\ 
&\qquad \leq r^{a} K_{s,p,d}  \frac{(3\tilde{K}_2 S_{a(p-1)}(r)  + 3\|f\|_{L^{\infty}(\R^d)}+ \|\partial_t \varphi\|_{L^{\infty}(B_R(x^{h})\times [t^{*}-R, t^{*}+R])})}{|\log(r)|}\\
& \qquad \leq r^{a} \left({3\tilde{K}_2+1}\right).
\end{split}
\end{equation}
The above computations hold for all $r$ small enough and the constant $\tilde{K}_2$ comes from Lemma \ref{lem: holder comb}. Next, in the expression for $g'$ we will apply estimates \eqref{est visc 1} and \eqref{est visc 2} (or \eqref{est visc 2b}) for $|y_\alpha| < 1$, and we will bound $|\tilde{U}_h(x^{h}+y_\alpha, t_j)-\xi|^{p-2}$ uniformly  in the remaining region.  Consequently,

\begin{equation*}
\begin{split}
g'(\xi) &\geq 1- \tau(p-1)\bigg( \sum_{0 < |y_\alpha| < r}(\Lambda_{u_0}(|y_\alpha|)+T\Lambda_f(|y_\alpha|)+(3\tilde{K}_2+1 )r^{a})^{p-2}\omega_\alpha  \\ & \quad + \sum_{r \leq |y_\alpha| \leq 1} (\Lambda_{u_0}(|y_\alpha|)+T\Lambda_f(|y_\alpha|)+(3\tilde{K}_2+1 )r^{a})^{p-2}\omega_\alpha \\ & \quad+2^{p-2} \left(L_{u_0}+TL_f+1\right)^{p-2} \sum_{|y_\alpha| \geq 1}\omega_\alpha \bigg)\\
&\geq 1- \tau(p-1)  (L_{u_0}+TL_f + 3\tilde{K}_{2}+1)^{p-2} C_{s,p,d} \left(r^{a(p-2)-sp} + S_{a(p-2)}(r) +2^{p-2}\right)  \\
&\geq  1- \frac{1}{2^{p-1}} >0
\end{split}
\end{equation*}
where we have used \eqref{relation cfl tau}, the fact that $\tau r^{a(p-2)-sp} \leq  K_{s,d,p} $, and we have taken $r$ small enough. Hence, $g' \geq 0$ in  $[\varphi(x^{h}, t_j), \tilde{U}_h(x^{h}, t_j)]$. This ends the proof. 
\end{proof} 

\section*{Acknowledgements}

The three authors have been partially supported by Project SI3-PJI-2021-00324, cofunded by Universidad
Autónoma de Madrid and Comunidad de Madrid.

F. del Teso was supported by the Spanish Government through RYC2020-029589-I. and PID2021-127105NB-I00 funded by the MICIN/AEI.
Part of this material is based upon work supported by the Swedish Research Council under grant no. 2016-06596 while F. del Teso was in residence at Institut Mittag-Leffler in Djursholm,
Sweden, during the research program “Geometric Aspects of Nonlinear Partial Differential Equations”,
fall of 2022.

M. Medina has been partially supported by Project PDI2019-110712GB-100, MICINN,
Spain and by project RYC2020-030410-I. 

P. Ochoa has been partially supported by CONICET.

\bibliographystyle{abbrv}

%If bibliography file is in the same folder use:

\bibliography{Bibliography} % The references (bibliography) information are stored in the file named "Bibliography.bib"

\end{document}